\theoremstyle{plain}
\newtheorem{theorem}{Theorem}[section]
\newtheorem{lemma}[theorem]{Lemma}
\newtheorem{proposition}[theorem]{Proposition}
\newtheorem{corollary}[theorem]{Corollary}
\theoremstyle{definition}
\newtheorem{definition}[theorem]{Definition}
\newcommand{\re}{\upharpoonright}
\newcommand{\dom}{\mathsf{dom}}
\newcommand{\ran}{\mathsf{ran}}
\newcommand{\maxi}{\mathsf{max}}
\newcommand{\di}{\mathsf{d}}
\newcommand{\Ball}{\mathsf{B}}
\newcommand{\cl}{\mathsf{cl}}
\newcommand{\dime}{\mathsf{dim}}
\newcommand{\coord}{\mathsf{coord}}
\newcommand{\supp}{\mathsf{supp}}
\newcommand{\CDH}{\mathsf{CDH}}
\newcommand{\GPP}{\mathsf{GPP}}
\newcommand{\WGPP}{\mathsf{WGPP}}
\newcommand{\BB}{\mathcal{B}}
\newcommand{\CC}{\mathcal{C}}
\newcommand{\DD}{\mathcal{D}}
\newcommand{\FF}{\mathcal{F}}
\newcommand{\HH}{\mathcal{H}}
\newcommand{\Ss}{\mathcal{S}}
\newcommand{\TT}{\mathcal{T}}
\newcommand{\BBB}{\mathbb{B}}
\newcommand{\DDD}{\mathbb{D}}
\newcommand{\PPP}{\mathbb{P}}
\newcommand{\QQQ}{\mathbb{Q}}
\newcommand{\RRR}{\mathbb{R}}
\newcommand{\SSS}{\mathbb{S}}
\newcommand{\cccc}{\mathfrak{c}}
\newcommand{\pppp}{\mathfrak{p}}
\begin{document}

\title{Countable dense homogeneity in large products of Polish spaces}

\author{Andrea Medini}
\address{Institut f\"{u}r Diskrete Mathematik und Geometrie
\newline\indent Technische Universit\"{a}t Wien
\newline\indent Wiedner Hauptstra\ss e 8-10/104
\newline\indent 1040 Vienna, Austria}
\email{andrea.medini@tuwien.ac.at}
\urladdr{https://www.dmg.tuwien.ac.at/medini/}

\author{Juris Stepr\={a}ns}
\address{Department of Mathematics
\newline\indent York University
\newline\indent 4700, Keele Street
\newline\indent Toronto, Ontario M3J 1P3, Canada}
\email{steprans@yorku.ca}
\urladdr{https://www.yorku.ca/professor/steprans/}

\date{October 28, 2025}

\thanks{The first-listed author thanks Vadim Kulikov for a useful conversation in their old office at the Kurt G\"{o}del Research Center. This research was funded in whole or in part by the Austrian Science Fund (FWF) DOI 10.55776/P35588. For open access purposes, the authors have applied a CC BY public copyright license to any authors-accepted manuscript version arising from this submission.}

\begin{abstract}
We give a unified treatment of the countable dense homogeneity of products of Polish spaces, with a focus on uncountable products. Our main result states that a product of fewer than $\pppp$ Polish spaces is countable dense homogeneous if the following conditions hold:
\begin{itemize}
\item Each factor is strongly locally homogeneous,
\item Each factor is strongly $n$-homogeneous for every $n\in\omega$,
\item Every countable subset of the product can be brought in general position.
\end{itemize}
For example, using the above theorem, one can show that $2^\kappa$, $\omega^\kappa$, $\RRR^\kappa$ and $[0,1]^\kappa$ are countable dense homogeneous for every infinite $\kappa <\pppp$ (these results are due to Stepr\={a}ns and Zhou, except for the one concerning $\omega^\kappa$). In fact, as a new application, we will show that every product of fewer than $\pppp$ connected manifolds with boundary is countable dense homogeneous, provided that none or infinitely many of the boundaries are non-empty. This generalizes a result of Yang. Along the way, we will discuss and employ several results concerning the general position of countable sets. Finally, we will show that our main result and its corollaries are optimal.
\end{abstract}

\subjclass[2020]{54B10, 54E50, 54H11, 57N99, 03E17.}

\keywords{Countable dense homogeneous, infinite product, Polish space, general position, topological group, manifold with boundary, zero-dimensional.}

\maketitle

\tableofcontents

\section{Introduction}\label{section_introduction}

By \emph{space} we mean topological space, with no assumptions regarding separation axioms. By \emph{countable} we mean at most countable. Given a space $X$, we will denote by $\HH(X)$ the group of all homeomorphisms $h:X\longrightarrow X$ with the operation of composition. A separable space $X$ is \emph{countable dense homogeneous} ($\CDH$) if for every pair $(D,E)$ of countable dense subsets of $X$ there exists $h\in\HH(X)$ such that $h[D]=E$. See \cite[Sections 14-16]{arhangelskii_van_mill} and \cite{hrusak_van_mill} respectively for a survey and a collection of open problems on this topic.

The following \cite[Theorem 5.2]{anderson_curtis_van_mill} is the fundamental positive result in the theory of $\CDH$ spaces. Given a space $X$ and $h\in\HH(X)$, we will use the notation
$$
\supp(h)=\{x\in X:h(x)\neq x\}
$$
for the \emph{support} of $h$. A space $X$ is \emph{strongly locally homogeneous} if there exists a base $\BB$ for $X$ such that for every $U\in\BB$ and $(x,y)\in U\times U$ there exists $h\in\HH(X)$ such that $h(x)=y$ and $\supp(h)\subseteq U$.

\begin{theorem}[Anderson, Curtis, van Mill]\label{theorem_fundamental}
Every strongly locally homogeneous Polish space is $\CDH$.
\end{theorem}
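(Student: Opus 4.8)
The plan is to run the classical back-and-forth construction, fixing a complete metric $\di$ on $X$ (available since $X$ is Polish) together with a base $\BB$ witnessing strong local homogeneity, and enumerating the two given countable dense sets as $D=\{d_n:n\in\omega\}$ and $E=\{e_n:n\in\omega\}$. I would build the desired homeomorphism as a uniform limit $h=\lim_n h_n$ of finite compositions $h_n=g_n\circ\cdots\circ g_0$, where each $g_k\in\HH(X)$ has $\supp(g_k)\subseteq U_k$ for a single basic open set $U_k\in\BB$. The recursion maintains a finite partial matching $h_n\re A_n\colon A_n\to B_n$ between finite subsets $A_n\subseteq D$ and $B_n\subseteq E$; at forth stages I extend it to cover the least element of $D$ not yet in $A_n$, and at back stages I extend it to cover the least element of $E$ not yet in $B_n$, so that in the limit $h[D]\subseteq E$ and $E\subseteq h[D]$.

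For the extension at a forth stage, let $d$ be the least unmatched element of $D$ and put $x=h_n(d)$. Since $h_n$ is a bijection and $d\notin A_n$, the key observation is that $x\notin B_n$, and as $B_n$ is finite (hence closed) I can choose $U\in\BB$ with $x\in U$ and $U\cap B_n=\emptyset$; density of $E$ then yields $e\in E\cap U$. Strong local homogeneity provides $g\in\HH(X)$ with $g(x)=e$ and $\supp(g)\subseteq U$, and because $\supp(g)\subseteq U$ is disjoint from $B_n$, the map $g$ fixes $B_n$ pointwise, so $h_{n+1}=g\circ h_n$ adjoins the pair $d\mapsto e$ without disturbing earlier ones. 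The back stage is the mirror image: around the target $e'\in E$ I pick a small $V\in\BB$ avoiding $B_n$, locate $d'\in D$ in the open set $h_n^{-1}[V]$ (automatically with $d'\notin A_n$), and use strong local homogeneity inside $V$ to move $h_n(d')$ to $e'$. Since a homeomorphism with support in $U_k$ necessarily maps $U_k$ onto $U_k$, each $g_k$ displaces every point by at most $\mathrm{diam}(U_k)$; shrinking the chosen basic sets so that $\mathrm{diam}(U_k)<2^{-k}$ makes $(h_n)$ uniformly Cauchy, whence it converges to a continuous $h$ by completeness of $\di$. The matchings are permanent because all later supports avoid the finite set of already-matched points of $E$.

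The main obstacle — and where completeness of $\di$ must be exploited a second time — is upgrading the continuous surjection $h$ to a genuine homeomorphism by showing that $(h_n^{-1})$ also converges. The difficulty is that $h_{n+1}^{-1}=h_n^{-1}\circ g_{n+1}^{-1}$, and applying the old map $h_n^{-1}$ to two nearby points can separate them arbitrarily, so smallness of $\mathrm{diam}(U_k)$ by itself does not bound the increments of the inverse sequence. I would build the required control into the recursion: once the combinatorics fix where $U_k$ must sit, I use continuity of the already-constructed $h_{k-1}^{-1}$ to shrink $U_k$ further so that in addition $\mathrm{diam}\bigl(h_{k-1}^{-1}[U_k]\bigr)<2^{-k}$. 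This forces $\di\bigl(h_{n+1}^{-1}(y),h_n^{-1}(y)\bigr)<2^{-(n+1)}$ for every $y$, so $(h_n^{-1})$ is uniformly Cauchy with continuous limit $f$. Passing to the limit in $h_n\circ h_n^{-1}=\mathrm{id}$ (legitimate since $h_n\to h$ uniformly with $h$ continuous) yields $h\circ f=f\circ h=\mathrm{id}$, so $h\in\HH(X)$ with $h^{-1}=f$ and $h[D]=E$, as required.
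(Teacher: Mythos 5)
Your proof is correct and is essentially the classical Anderson--Curtis--van Mill back-and-forth argument that the paper cites for this theorem and then generalizes in Theorem \ref{theorem_main}: each new homeomorphism is supported in a basic set chosen so small that both the forward increments and the increments of the inverse sequence are controlled, and permanence of the matching follows from keeping later supports off the finitely many points already matched. In particular, your two diameter requirements, $\mathrm{diam}(U_k)<2^{-k}$ and $\mathrm{diam}\big(h_{k-1}^{-1}[U_k]\big)<2^{-k}$, are precisely conditions (1) and (2) of the paper's Inductive Convergence Criterion (Theorem \ref{theorem_convergence}), and your limit argument identifying $f$ as the inverse of $h$ is the same verification carried out in its proof.
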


In particular, familiar spaces like manifolds, the Cantor set $2^\omega$ and the Baire space $\omega^\omega$ are all examples of $\CDH$ spaces (see Proposition \ref{proposition_zero_dimensional}). Another example is given by the Hilbert cube $[0,1]^\omega$, as shown by Fort \cite[Theorem 2]{fort}. In fact, the following more general result \cite[Corollary 2 and Remark 2]{yang} holds.\footnote{\,In \cite[Remark 2]{yang}, the case in which all boundaries are empty is not mentioned; but this is in fact the easiest case, since the tricky part is dealing with points on the boundary.} Given a cardinal $\kappa$ and manifolds with boundary $X_\alpha$ for $\alpha\in\kappa$, it will be convenient to define the following condition, where $\partial X_\alpha$ denotes the boundary of $X_\alpha$:

\begin{enumerate}
\item[$(\partial)$]	Either $\partial X_\alpha\neq\varnothing$ for infinitely many values of $\alpha$ or $\partial X_\alpha=\varnothing$ for all $\alpha$.
\end{enumerate}

\begin{theorem}[Yang]\label{theorem_yang}
Let $\kappa\leq\omega$ be a cardinal, and let $X_\alpha$ for $\alpha\in\kappa$ be manifolds with boundary. Assume that each $X_\alpha$ is connected and that condition $(\partial)$ holds. Then $\prod_{\alpha\in\kappa}X_\alpha$ is $\CDH$.
\end{theorem}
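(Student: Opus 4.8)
The plan is to reduce everything, as far as possible, to Theorem \ref{theorem_fundamental}. The immediate obstruction is that a manifold with boundary is \emph{not} strongly locally homogeneous at a boundary point, since no local homeomorphism can send a boundary point to an interior one; so neither the factors $X_\alpha$ nor (in general) the product $P=\prod_{\alpha\in\kappa}X_\alpha$ are directly amenable to Theorem \ref{theorem_fundamental}. Condition $(\partial)$ is precisely what lets me route around this. First I would use it to regroup the countably many factors into countably many \emph{blocks}: since either no $X_\alpha$ has boundary or infinitely many do, I can sweep all the boundaried factors into a single block while leaving each boundaryless factor as a singleton block. Thus $P\cong\prod_i Y_i$, where each $Y_i$ is either a single connected manifold without boundary or an infinite product of connected manifolds with nonempty boundary. (For finite $\kappa$, condition $(\partial)$ forces every factor to be boundaryless---as it must, since already $[0,1]$ is not $\CDH$.)

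Next I would check that each block is Polish and well behaved. A connected manifold without boundary is strongly locally homogeneous (move points inside a Euclidean chart by a compactly supported homeomorphism) and strongly $n$-homogeneous for every $n$. The crux is the boundaried block $Y=\prod_{n\in\omega}Z_n$, with each $Z_n$ connected and $\partial Z_n\neq\varnothing$. Here I would prove that, despite every factor having boundary, $Y$ \emph{is} strongly locally homogeneous (and strongly $n$-homogeneous). The mechanism is the Keller-type phenomenon underlying the homogeneity of the Hilbert cube: via boundary collars $Z_n\supseteq\partial Z_n\times[0,1)$ the product acquires infinitely many independent collar directions, the union of all boundary parts becomes a topologically negligible ($Z$-set-like) subset of $Y$, and consequently a boundary point is no longer distinguished from an interior point---exactly as the corner $(0,0,\dots)$ and the center $(1/2,1/2,\dots)$ of $[0,1]^\omega$ are interchangeable (Fort, Keller). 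It is the infinite-dimensionality that upgrades homogeneity to the \emph{supported}, local statement that a single boundaried manifold lacks.

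With every block shown to be a Polish, strongly locally homogeneous, strongly $n$-homogeneous space, the final step is assembly. I would not expect $P$ itself to be strongly locally homogeneous (e.g. for $(\SSS^1)^\omega$ this is unclear), so rather than applying Theorem \ref{theorem_fundamental} to $P$ directly I would run the back-and-forth underlying that theorem over the blocks: given countable dense $D,E\subseteq\prod_i Y_i$, enumerate $D\cup E$ and successively extend a finite partial matching to a homeomorphism, each time relocating one new point using strong local homogeneity and strong $n$-homogeneity in finitely many block-coordinates while preserving earlier commitments. To make the countably many coordinates cohere, I would first bring $D\cup E$ into general position, so that the relevant points are separated coordinatewise.

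The main obstacle is the boundaried-block lemma of the second paragraph: promoting Keller-type homogeneity to a genuinely local, supported (hence strongly locally homogeneous) and strongly $n$-homogeneous statement, while separating the compact factors (handled by Hilbert cube manifold theory) from the noncompact ones (handled by $s\cong\RRR^\omega$/$\ell^2$-manifold theory). A secondary difficulty is the general-position step that makes the back-and-forth close up. In short, all the work concentrates at the boundary, exactly as condition $(\partial)$ signals.
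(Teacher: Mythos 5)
There is a genuine gap, and it sits exactly where your back-and-forth needs its input. When a new block coordinate $i$ enters the finite support of your construction, the finitely many pairs $(d,\sigma(d))$ already matched force the coordinate homeomorphism at $i$ to realize the finite bijection $d(i)\mapsto\sigma(d)(i)$, whose domain and range are arbitrary finite sets; this is strong $n$-homogeneity of the block, and nothing weaker will do. But your claim that a single connected manifold without boundary is strongly $n$-homogeneous for every $n$ is false in dimension one: homeomorphisms of $\RRR$ are monotone, so $\RRR$ is not strongly $3$-homogeneous, and homeomorphisms of $\SSS^1$ preserve or reverse the cyclic order, so $\SSS^1$ is not strongly $4$-homogeneous. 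Since your decomposition keeps every boundaryless factor as a singleton block, the argument collapses already for $\prod_{n\in\omega}\SSS^1$ or for $\RRR\times[0,1]^\omega$, both instances of Theorem \ref{theorem_yang}; and one cannot evade this by choosing $\sigma$ order-compatibly, since compatibility would be needed in infinitely many circle coordinates at once, a condition that confines the candidate targets to a product of proper arcs, which has empty interior and may miss the given dense set entirely. Your second pillar, bringing $D\cup E$ into general position by a homeomorphism of the product, is also left unproven and is not a side issue: in this paper, general position for countable products of manifolds is \emph{deduced from} Theorem \ref{theorem_yang} (Proposition \ref{proposition_cdh_implies_gpp} feeding Theorem \ref{theorem_gpp_manifolds}), so there is no non-circular result to quote, and an independent proof is essentially the convenient-pair machinery of Section \ref{section_manifolds}.

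For comparison: the paper does not prove Theorem \ref{theorem_yang} at all; it cites \cite[Corollary 2 and Remark 2]{yang}, and the route that citation (and the paper's own use of Yang's results in Corollary \ref{corollary_main_manifolds}) points to is the direct one you explicitly set aside, namely to prove that the \emph{whole} product is strongly locally homogeneous and then apply Theorem \ref{theorem_fundamental}. Your stated reason for avoiding it, that strong local homogeneity of $(\SSS^1)^\omega$ is ``unclear,'' is mistaken: a product of homogeneous, strongly locally homogeneous spaces is strongly locally homogeneous (use supported homeomorphisms in the finitely many restricted coordinates and arbitrary point-moving homeomorphisms elsewhere), and every connected boundaryless manifold is both. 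The only genuinely hard case is your own crux lemma, that an infinite product of connected manifolds all with non-empty boundary is strongly locally homogeneous, which is precisely Yang's Theorem 3. Moreover, once that lemma is in hand, your block machinery becomes superfluous: each of your blocks is connected and strongly locally homogeneous, hence homogeneous (in a strongly locally homogeneous space the orbits of the homeomorphism group are clopen), so the full product is a product of homogeneous strongly locally homogeneous spaces; it is therefore itself strongly locally homogeneous and Polish, and Theorem \ref{theorem_fundamental} yields that it is $\CDH$ outright, with no strong $n$-homogeneity, no general position, and no back-and-forth. In short, the proposal contains a false step exactly where it needs a true one, and detours around the one observation that would make it work.
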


As observed by Yang \cite[Example 1]{yang}, the assumption of connectedness cannot be dropped (see also Proposition \ref{proposition_optimality_connectedness}). Furthermore, in \cite[Remark 2]{yang}, a result of Ganea \cite{ganea} is cited, from which one can easily deduce that the assumption of condition $(\partial)$ cannot be dropped either (see Theorem \ref{theorem_optimality_boundaries}).

Given that the product of at most $\cccc$ separable spaces is separable by the classical Hewitt-Marczewski-Pondiczery theorem (see \cite[Corollary 2.3.16]{engelking} or \cite[Exercise III.2.13]{kunen}), it is natural to wonder whether the $\kappa$-th power of the spaces mentioned above might be $\CDH$ for uncountable values of $\kappa$. For some of these spaces, this was consistently achieved by Stepr\={a}ns and Zhou \cite[Theorem 4]{steprans_zhou}, who showed that $2^\kappa$, $\RRR^\kappa$ and $[0,1]^\kappa$ are $\CDH$ for every infinite $\kappa<\pppp$ (see also \cite[Section 6]{hernandez_gutierrez_function} for the case of $[0,1]^\kappa$). As usual, we denote by $\pppp$ the \emph{pseudointersection number} (see \cite{kunen} for its definition and basic properties).

Although the article \cite{steprans_zhou} contains the fundamental idea, it does not provide a unique result from which the countable dense homogeneity of all of these spaces can be simultaneously deduced. Furthermore, details are not given on how to bring countable sets in general position, which is the crucial intermediate step in the proof.

The purpose of this article is to formulate such a unified approach (see Section~\ref{section_main}), and to systematically discuss the topic of general position (see Sections \ref{section_general}, \ref{section_groups} and~\ref{section_manifolds}). While working on this goal, we conjectured that it should be possible to extend the results of \cite{steprans_zhou} to the powers of other nice spaces, such as $\omega^\kappa$ or $X^\kappa$ for a connected manifold $X$. In fact, as applications of our main result, we will show that ``almost'' every infinite product of fewer than $\pppp$ zero-dimensional Polish spaces is $\CDH$ (see Corollary \ref{corollary_main_zero_dimensional}), and that Theorem \ref{theorem_yang} can be generalized to products with fewer than $\pppp$ factors (see Corollary \ref{corollary_main_manifolds}). Finally, in  Section \ref{section_optimality}, we will provide counterexamples to show that these results are optimal.

\section{More preliminaries, notation and terminology}\label{section_preliminaries}

Our reference for set theory is \cite{kunen}. Given a set $X$ and a cardinal $\kappa$, we will use the notation
$$
[X]^\kappa=\{S\subseteq X:|S|=\kappa\}\text{ and }[X]^{<\kappa}=\{S\subseteq X:|S|<\kappa\}.
$$
Given a cardinal $\kappa$ and a product $X=\prod_{\alpha\in\kappa}X_\alpha$, we will denote by $\pi_\alpha:X\longrightarrow X_\alpha$ the natural projection. Furthermore, given $\Omega\subseteq\kappa$, we will use the notation $X_\Omega=\prod_{\alpha\in\Omega}X_\alpha$ for the corresponding subproduct, and the notation $\tau_\Omega:X\longrightarrow X_\Omega$ for the natural projection.

Our references for general topology are \cite{engelking} and \cite{van_mill_book}. Our reference for topological groups is \cite{arhangelskii_tkachenko}. A space $X$ is \emph{crowded} if $X$ is non-empty and every non-empty open subset of $X$ is infinite.\footnote{\,We are slightly ``cheating'' with this definition, since one usually asks that $X$ is non-empty and has no isolated points. The reason for this deviation is that it makes the proof of Lemma \ref{lemma_exists_gp} work. In any case, the two definitions are equivalent for $\mathsf{T}_1$ spaces.} A subset $M$ of a space $Z$ is \emph{meager} if there exist closed nowhere dense subsets $C_n$ of $X$ for $n\in\omega$ such that $M\subseteq\bigcup_{n\in\omega}C_n$. A space $X$ is \emph{Baire} if every non-empty open subset of $X$ is non-meager in $X$. Given $n\in\omega$, a space $X$ is \emph{strongly $n$-homogeneous} if for every bijection $\sigma:F\longrightarrow G$, where $F,G\in [X]^n$, there exists $h\in\HH(X)$ such that $\sigma\subseteq h$. Trivially, every space is strongly $0$-homogeneous. A space is \emph{homogeneous} if it is strongly $1$-homogeneous. Every topological group is an example of homogeneous space.

A space is \emph{zero-dimensional} if it non-empty, $\mathsf{T}_1$, and it has a base consisting of clopen sets. It is easy to realize that every zero-dimensional space is Tychonoff. As the following well-known result shows, homogeneity has very strong consequences in the zero-dimensional realm (see \cite[proof of Lemma 1.9.4]{van_mill_book} for the first part; the second part can be proved using similar ideas).

\begin{proposition}\label{proposition_zero_dimensional}
Let $X$ be a zero-dimensional space. If $X$ is homogeneous then the following conditions hold:
\begin{itemize}
\item $X$ is strongly locally homogeneous,
\item $X$ is strongly $n$-homogeneous for every $n\in\omega$.	
\end{itemize}
\end{proposition}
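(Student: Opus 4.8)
The plan is to treat the two bullets in turn, first establishing strong local homogeneity and then bootstrapping it to strong $n$-homogeneity. Throughout I would work with the base $\BB$ consisting of \emph{all} clopen subsets of $X$, which is a base because $X$ is zero-dimensional.

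For the first bullet, fix a clopen $U$ and points $x,y\in U$; I may assume $x\neq y$. Homogeneity yields $g\in\HH(X)$ with $g(x)=y$, and the idea is to ``localize'' $g$ to a clopen swap supported in $U$. Since $X$ is zero-dimensional (hence Hausdorff) I can pick a clopen $V_0$ with $y\in V_0$ and $x\notin V_0$, and then, using that $g(x)=y\in V_0\cap U$, shrink a neighborhood of $x$ to a clopen $V$ with $x\in V\subseteq U\cap(X\setminus V_0)\cap g^{-1}[V_0]\cap g^{-1}[U]$. Setting $W=g[V]$, the sets $V$ and $W$ are disjoint clopen subsets of $U$ with $x\in V$, $y\in W$, and $g\re V\colon V\to W$ a homeomorphism. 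I would then define $h$ to equal $g\re V$ on $V$, its inverse on $W$, and the identity elsewhere; since $V$, $W$ and $X\setminus(V\cup W)$ form a clopen partition of $X$, this $h$ is a well-defined involutive homeomorphism with $h(x)=y$ and $\supp(h)\subseteq V\cup W\subseteq U$, which is exactly what SLH requires of $\BB$.

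For the second bullet I would first dispose of a degenerate case: if $X$ has an isolated point then, by homogeneity, every point is isolated, so $X$ is discrete and every bijection between finite subsets visibly extends to a bijection (equivalently a homeomorphism) of $X$. So assume $X$ is crowded, i.e.\ every nonempty clopen set is infinite. The engine is the following consequence of the SLH just proved: given pairwise disjoint clopen sets $U_1,\dots,U_k$ and points $p_i,q_i\in U_i$, composing the supported homeomorphisms supplied by SLH (their disjoint supports make them commute) yields a single $h\in\HH(X)$ with $h(p_i)=q_i$ for all $i$. Now let $\sigma\colon F\to G$ be a bijection with $F=\{x_1,\dots,x_n\}$, $G=\{y_1,\dots,y_n\}$ and $\sigma(x_i)=y_i$. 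The main obstacle is that $F$ and $G$ may overlap (for instance $\sigma$ could be a cyclic permutation), so one cannot simply route each $x_i$ to $y_i$ through a single disjoint clopen set. I would circumvent this with a two-stage ``decoupling'': separate the $x_i$ by pairwise disjoint clopen sets $U_i$, use crowdedness to choose pairwise distinct auxiliary points $c_i\in U_i$ avoiding $G$, and apply the engine to get $h_1$ with $h_1(x_i)=c_i$. Now the $2n$ points $c_1,\dots,c_n,y_1,\dots,y_n$ are all distinct, so I can enclose each pair $\{c_i,y_i\}$ in a clopen $W_i$ with the $W_i$ pairwise disjoint, and apply the engine again to get $h_2$ with $h_2(c_i)=y_i$. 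Then $h=h_2\circ h_1$ satisfies $h(x_i)=y_i$, i.e.\ $\sigma\subseteq h$.

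The only genuinely delicate points I anticipate are checking that the clopen swap in the first part is a homeomorphism (handled entirely by the finite clopen partition, on each piece of which $h$ is either $g\re V$, its inverse, or the identity) and guaranteeing that the fresh points $c_i$ exist, which is precisely where crowdedness—and hence the preliminary reduction to the non-discrete case—is used.
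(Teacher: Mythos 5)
Your proposal is correct. The paper does not actually write out a proof of this proposition---it cites the proof of Lemma 1.9.4 in van Mill's book for the first bullet and remarks that the second ``can be proved using similar ideas''---and your argument is exactly that standard approach, carried out in full: the clopen swap between $V$ and $g[V]$ (made disjoint inside $U$ via a clopen set separating $x$ from $y$) gives strong local homogeneity, and then the disjoint-support composition ``engine,'' together with the discrete/crowded dichotomy and the two-stage detour through fresh points $c_i$ avoiding $G$, correctly yields strong $n$-homogeneity for every $n\in\omega$.
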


Another useful result concerning homogeneity is the following, whose simple proof is left to the reader. We will use $\oplus$ to denote the topological disjoint sum.

\begin{proposition}\label{proposition_slh}
Let $X$ be a space, and let $Z$ denote the set of isolated points of~$X$. If $X$ is strongly locally homogeneous then $X$ is homeomorphic to $Z\oplus (X\setminus Z)$.
\end{proposition}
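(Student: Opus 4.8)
The plan is to prove the stronger statement that the set $Z$ of isolated points is clopen in $X$; the desired conclusion then follows immediately, since the identity map is a homeomorphism between $X$ and $Z\oplus(X\setminus Z)$ precisely when both $Z$ and $X\setminus Z$ are open in $X$. (Recall that the topology of a disjoint sum $A\oplus B$ declares a set open exactly when it meets each summand in a relatively open set, and this coincides with the subspace topology inherited from $X$ as soon as $A$ and $B$ are themselves open.)

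That $Z$ is open requires no hypothesis, as each isolated point contributes an open singleton and $Z$ is their union. So the real content is showing that $Z$ is closed. First I would record the elementary fact that every $h\in\HH(X)$ carries isolated points to isolated points: since $h$ is a homeomorphism, $\{h(x)\}=h[\{x\}]$ is open if and only if $\{x\}$ is. Then I would argue by contradiction: assuming some non-isolated point $x$ lies in $\cl(Z)$, I would fix a base $\BB$ witnessing strong local homogeneity, choose $U\in\BB$ with $x\in U$, and use $x\in\cl(Z)$ to locate an isolated point $z\in U\cap Z$. Applying strong local homogeneity to the pair $(x,z)\in U\times U$ produces $h\in\HH(X)$ with $h(x)=z$, which is absurd because $x$ is non-isolated while $z$ is isolated.

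I do not anticipate a genuine obstacle here: the argument is short, and in fact the support condition built into strong local homogeneity is never used---all that is needed is that $x$ and a nearby isolated point can be found inside a common member of a base across which points may be exchanged by homeomorphisms. The only point deserving care is the invariance of isolatedness under homeomorphisms, which is what forces $Z$ to be closed and thereby clopen.
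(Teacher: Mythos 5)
Your proof is correct: showing that $Z$ is clopen (openness being automatic, closedness following because strong local homogeneity would otherwise produce a homeomorphism sending a non-isolated point to a nearby isolated one, contradicting the fact that homeomorphisms preserve isolated points) immediately yields that the identity witnesses $X\cong Z\oplus(X\setminus Z)$. The paper leaves this proof to the reader, and your argument is exactly the intended simple one; as you observe, the support condition in the definition of strong local homogeneity is not even needed.
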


We will assume familiarity with the basic theory of Polish spaces (all the relevant facts are contained in \cite[Appendix A.6]{van_mill_book}).\footnote{\,In \cite{van_mill_book}, the terminology ``topologically complete'' is used instead of ``Polish'' (at the beginning of the book, van Mill assumes that all spaces are separable and metrizable).} A metric on a space $X$ is \emph{admissible} if it induces the topology of $X$. A space $X$ is \emph{Polish} if $X$ is separable and there exists an admissible complete metric on $X$.

We will assume some familiarity with the basic notions concerning topological manifolds, for which we refer to \cite{lee}. In particular, all manifolds with boundary are separable and metrizable (in fact, they are Polish). Furthermore, every non-empty manifold with boundary $X$ is an $n$-manifold with boundary for a unique $n\in\omega$, which we will denote by $\dime(X)$. Notice that a manifold with boundary $X$ is crowded if and only if it is non-empty and $\dime(X)\geq 1$. Unlike \cite{fort} and \cite{yang}, we allow manifolds with boundary to have an empty boundary, so that every manifold is a manifold with boundary (this includes the statement of Theorem \ref{theorem_yang}).

Given a manifold with boundary $X$, we will denote by $\partial X$ the boundary of $X$. Observe that
$$
\partial (X\times Y)=\big(\partial (X)\times Y\big)\cup\big(X\times\partial (Y)\big)
$$
whenever $X$ and $Y$ are manifolds with boundary, and that a similar formula holds for arbitrary finite products. With a slight abuse of notation, given an infinite cardinal $\kappa$ and manifolds with boundary $X_\alpha$ for $\alpha\in\kappa$, we will still use $\partial X$ to denote the \emph{pseudoboundary} of $X=\prod_{\alpha\in\kappa}X_\alpha$, that is
$$
\partial X=\{x\in X:x(\alpha)\in\partial X_\alpha\text{ for some }\alpha\in\kappa\}.
$$

Given $1\leq n<\omega$, we will use the following notation:
\begin{itemize}
\item $\BBB^n=\{x\in\RRR^n:||x||<1\}$,
\item $\DDD^n=\{x\in\RRR^n:||x||\leq 1\}$,
\item $\SSS^{n-1}=\{x\in\RRR^n:||x||=1\}$,
\end{itemize}
where $||x||=\sqrt{x(0)^2+\cdots +x(n-1)^2}$ denotes the Euclidean norm of $x\in\RRR^n$.

We will assume familiarity with the basic theory of pointwise and uniform convergence, for which we refer to \cite[Section 1.3]{van_mill_book} and \cite[Chapter 7]{munkres}. Given sets $X$ and $Y$, we will denote by $Y^X$ the collection of all functions $f:X\longrightarrow Y$. Given spaces $X$ and $Y$, we will use the notation
$$
\CC(X,Y)=\{f\in Y^X:f\text{ is continuous}\}.
$$

In Section \ref{section_convergence}, we will need the following fact, which can be safely assumed to be folklore. We leave its standard proof to the reader.\footnote{\,In the statement of Proposition \ref{proposition_convergence}, the sequence $(2^{-n}:n\in\omega)$ could have been substituted by any sequence $(\varepsilon_n:n\in\omega)$ such that each $\varepsilon_n >0$ and $\varepsilon_0 +\varepsilon_1 +\cdots$ converges. The same holds for Theorem \ref{theorem_convergence}.}

\begin{proposition}\label{proposition_convergence}
Let $X$ be a space, let $Y$ be a complete metric space with distance~$\di$, and let $f_n\in\CC(X,Y)$ for $n\in\omega$. Assume that $\di\big(f_{n+1}(x),f_n(x)\big)\leq 2^{-n}$ for every $n\in\omega$ and $x\in X$. Then $(f_n:n\in\omega)$ converges uniformly (to a continuous function).
\end{proposition}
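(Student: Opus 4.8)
The plan is to establish a uniform Cauchy estimate, extract a pointwise limit using the completeness of $Y$, and then upgrade pointwise convergence to uniform convergence by passing to the limit in that estimate. First I would fix $x\in X$ and, for $m>n$, use the triangle inequality together with the hypothesis to telescope:
$$
\di\big(f_m(x),f_n(x)\big)\leq\sum_{k=n}^{m-1}\di\big(f_{k+1}(x),f_k(x)\big)\leq\sum_{k=n}^{m-1}2^{-k}\leq 2^{1-n}.
$$
The crucial feature of this bound is that $2^{1-n}$ does not depend on $x$; this is exactly where the convergence of the geometric series (equivalently, the summability condition noted in the footnote) is used.

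Next, since the displayed bound shows that $(f_n(x):n\in\omega)$ is a Cauchy sequence in $Y$ for each fixed $x$, and $Y$ is complete, I would define $f:X\longrightarrow Y$ by letting $f(x)$ be the limit of $(f_n(x):n\in\omega)$. To obtain uniform convergence, I would fix $n$ and $x$ and let $m\to\infty$ in the above inequality; since the metric $\di$ is continuous in each argument, this yields
$$
\di\big(f(x),f_n(x)\big)\leq 2^{1-n}
$$
for every $x\in X$. As the right-hand side is independent of $x$ and tends to $0$ as $n\to\infty$, this is precisely the assertion that $(f_n:n\in\omega)$ converges uniformly to $f$.

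Finally, I would invoke the classical theorem that a uniform limit of continuous functions from an arbitrary space into a metric space is continuous, so that $f\in\CC(X,Y)$ as required. There is no genuine obstacle here: the argument is entirely routine. The only point deserving a moment of attention is the passage $m\to\infty$, where one should observe that it is the \emph{uniformity} of the Cauchy estimate in $x$ that guarantees uniform rather than merely pointwise convergence.
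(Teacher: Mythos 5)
Your proof is correct. The paper does not actually include an argument for this proposition --- it explicitly declares it folklore and leaves the ``standard proof to the reader'' --- and what you have written (telescoping to a uniform Cauchy estimate, using completeness to get a pointwise limit $f$, letting $m\to\infty$ in the estimate to get $\di\big(f(x),f_n(x)\big)\leq 2^{1-n}$ uniformly in $x$, and then invoking the fact that a uniform limit of continuous maps into a metric space is continuous) is exactly the standard argument the authors intend, with the right emphasis on the uniformity of the bound in $x$.
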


In Section \ref{section_manifolds}, we will need the following well-known facts relating uniform convergence and equicontinuity. Their proofs are exercises in the ``$\varepsilon/3$ strategy,'' and we leave them to the reader. Given a space $X$, a point $x_0\in X$, and a metric space $Y$ with distance $\di$, recall that a family $\FF\subseteq Y^X$ is \emph{equicontinuous at $x_0$} if for every $\varepsilon>0$ there exists a neighborhood $U$ of $x_0$ in $X$ such that
$$
\di\big(f(x),f(x_0)\big)<\varepsilon\text{  whevener }f\in\FF\text{ and }x\in U.
$$
Such a family is \emph{equicontinuous} if it is equicontinuous at every point of $X$.\footnote{\,Our definition (which is taken from \cite[page 276]{munkres}) is sometimes expressed by saying that $\FF$ is \emph{pointwise equicontinuous}. Other sources give \emph{uniform equicontinuity} as their official definition instead (see for example \cite[Definition 7.22]{rudin}). In any case, it is not hard to see that these two notions are equivalent when $X$ is compact.}

\begin{proposition}\label{proposition_equicontinuous}
Let $X$ be a space, let $Y$ be a metric space, and let $f_n\in\CC(X,Y)$ for $n\in\omega$. If $(f_n:n\in\omega)$ converges uniformly (to a continuous function) then $\{f_n:n\in\omega\}$ is equicontinuous.
\end{proposition}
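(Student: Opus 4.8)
The plan is to verify equicontinuity at an arbitrary point directly, using the classical ``$\varepsilon/3$'' splitting. Let $\di$ denote the distance on $Y$, let $f$ be the (continuous) uniform limit of $(f_n:n\in\omega)$, fix $x_0\in X$, and fix $\varepsilon>0$. I will produce a single neighborhood $U$ of $x_0$ that works simultaneously for every $f_n$.

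First I would split the family into a tail and a head. By uniform convergence, choose $N\in\omega$ so large that $\di(f_n(x),f(x))<\varepsilon/3$ for all $n\geq N$ and all $x\in X$; this is the only place where uniform (rather than merely pointwise) convergence is used, and it is precisely what lets me control infinitely many of the $f_n$ at once. Since $f$ is continuous at $x_0$, pick a neighborhood $V$ of $x_0$ with $\di(f(x),f(x_0))<\varepsilon/3$ for all $x\in V$. Then for every $n\geq N$ and every $x\in V$, the triangle inequality gives
$$
\di\big(f_n(x),f_n(x_0)\big)\leq\di\big(f_n(x),f(x)\big)+\di\big(f(x),f(x_0)\big)+\di\big(f(x_0),f_n(x_0)\big)<\varepsilon,
$$
so $V$ handles the whole tail uniformly.

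It remains to deal with the finitely many functions $f_0,\dots,f_{N-1}$. Each of these is continuous at $x_0$ by assumption, so for every $n<N$ I can choose a neighborhood $V_n$ of $x_0$ with $\di(f_n(x),f_n(x_0))<\varepsilon$ for all $x\in V_n$. Setting $U=V\cap V_0\cap\cdots\cap V_{N-1}$ yields a neighborhood of $x_0$ (a finite intersection of neighborhoods) such that $\di(f_n(x),f_n(x_0))<\varepsilon$ for every $n\in\omega$ and every $x\in U$. Since $x_0$ and $\varepsilon$ were arbitrary, this shows that $\{f_n:n\in\omega\}$ is equicontinuous. There is no real obstacle here: the entire content is the triangle-inequality estimate for the tail, together with the fact that only finitely many functions are then left to treat individually.
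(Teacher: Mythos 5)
Your proof is correct, and it is precisely the ``$\varepsilon/3$ strategy'' that the paper invokes when it leaves this proposition as an exercise to the reader: a uniform tail estimate via the limit function $f$, followed by finitely many individual continuity neighborhoods intersected together. Nothing to add.
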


\begin{theorem}\label{theorem_dini}
Let $X$ be a compact space, let $Y$ be a metric space, and let $f_n\in\CC(X,Y)$ for $n\in\omega$. Assume that the following conditions hold:
\begin{itemize}
\item $(f_n:n\in\omega)$ converges to a continuous function $f$,
\item $\{f_n:n\in\omega\}$ is equicontinuous.
\end{itemize}
Then $(f_n:n\in\omega)$ converges uniformly (to $f$).
\end{theorem}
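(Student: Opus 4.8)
The plan is to run the standard ``$\varepsilon/3$'' argument, exploiting equicontinuity to reduce the problem to controlling the functions at finitely many points, a reduction that compactness makes legitimate. I would fix $\varepsilon>0$ and aim to produce a single $N\in\omega$ such that $\di\big(f_n(x),f(x)\big)<\varepsilon$ for all $n\geq N$ and all $x\in X$ simultaneously.

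First, for each $x_0\in X$ I would use the equicontinuity of $\{f_n:n\in\omega\}$ at $x_0$ to choose a neighborhood $U_{x_0}$ of $x_0$ such that $\di\big(f_n(x),f_n(x_0)\big)<\varepsilon/3$ whenever $n\in\omega$ and $x\in U_{x_0}$. The key observation is that the very same estimate transfers to the limit function $f$: since $f_n\to f$ pointwise and the distance $\di$ is continuous, letting $n\to\infty$ in the previous inequality yields $\di\big(f(x),f(x_0)\big)\leq\varepsilon/3$ for every $x\in U_{x_0}$. Thus one family of neighborhoods controls all of the $f_n$ together with $f$.

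Next I would invoke compactness of $X$: the open cover $\{U_{x_0}:x_0\in X\}$ admits a finite subcover $U_{x_1},\ldots,U_{x_k}$. For each $i\in\{1,\ldots,k\}$, pointwise convergence at the single point $x_i$ provides $N_i\in\omega$ with $\di\big(f_n(x_i),f(x_i)\big)<\varepsilon/3$ for all $n\geq N_i$, and I would set $N=\maxi\{N_1,\ldots,N_k\}$. Given an arbitrary $x\in X$ and $n\geq N$, I would pick $i$ with $x\in U_{x_i}$; the triangle inequality then gives
$$
\di\big(f_n(x),f(x)\big)\leq\di\big(f_n(x),f_n(x_i)\big)+\di\big(f_n(x_i),f(x_i)\big)+\di\big(f(x_i),f(x)\big)<\varepsilon,
$$
since each summand is below $\varepsilon/3$. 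As $x$ was arbitrary, this is precisely uniform convergence.

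The argument is almost entirely routine, and the only step that needs any thought is the transfer of the equicontinuity estimate from the $f_n$ to the limit $f$; but this is immediate once one passes to the limit inside the continuous function $\di$. The genuine work is carried by compactness, which is what upgrades the finitely many pointwise thresholds at the centers $x_i$ into the single uniform threshold $N$. I note that the continuity of $f$ is already assumed in the hypotheses, although it would in fact follow from pointwise convergence together with equicontinuity, using the very neighborhoods $U_{x_0}$ above.
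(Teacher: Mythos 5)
Your proof is correct, and it is exactly the ``$\varepsilon/3$ strategy'' that the paper invokes when it leaves this result as an exercise to the reader: equicontinuity gives neighborhoods controlling all $f_n$ (and, by passing to the limit, $f$ itself), compactness extracts a finite subcover, and pointwise convergence at the finitely many centers yields the uniform threshold $N$. Your closing remark that continuity of $f$ is automatic from equicontinuity plus pointwise convergence is also accurate, and explains why the paper's parenthetical ``(to a continuous function)'' is not an extra hypothesis in substance.
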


Let $X$ be a compact space, and let $Y$ be a metric space with distance $\di$. Define
$$
\widehat{\di}(f,g)=\maxi\big\{\di\big(f(x),g(x)\big):x\in X\big\}
$$
for $f,g\in\CC(X,Y)$. It is well-known that $\widehat{\di}$ is a metric on $\CC(X,Y)$, and the topology that it induces is known as the \emph{topology of uniform convergence}. It is clear that a sequence of continuous functions converges uniformly if and only if it converges with respect to this topology.

Let $X$ and $Y$ be spaces. Given a compact subset $K$ of $X$ and an open subset $U$ of $Y$, set
$$
[K,U]=\{f\in\CC(X,Y):f[K]\subseteq U\}.
$$
The \emph{compact-open topology} on $\CC(X,Y)$ is the one generated by the sets $[K,U]$. This topology will be used explicitly only in the proof of Theorem \ref{theorem_gpp_compact}. In the rest of this article, we will only care about two facts: the first is that it is defined in purely topological terms (that is, no metric is involved), and the second is given by the following well-known result (which follows immediately from \cite[Theorems 46.2 and 46.8]{munkres}; see also \cite[Proposition 1.3.3]{van_mill_book}).

\begin{theorem}\label{theorem_compact_open}
Let $X$ be a compact space, and let $Y$ be a metric space. Then the topology of uniform convergence on $\CC(X,Y)$ coincides with the compact-open topology.
\end{theorem}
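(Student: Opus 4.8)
The plan is to show that each of the two topologies on $\CC(X,Y)$ refines the other. Throughout I will write $\Ball(a,r)=\{y\in Y:\di(y,a)<r\}$ for the open ball and $\bar{\Ball}(a,r)=\{y\in Y:\di(y,a)\le r\}$ for the closed ball in $Y$. Two observations drive the argument: $X$ is compact, so that closed subsets of $X$ are compact with no appeal to separation axioms; and $Y$ is metric, so that disjoint compact and closed subsets of $Y$ are a positive distance apart.

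First I would check that the compact-open topology is coarser than the topology of uniform convergence. Let $[K,U]$ be a subbasic compact-open set and let $f\in[K,U]$; I may assume $K\neq\varnothing$ and $U\neq Y$, since otherwise $[K,U]=\CC(X,Y)$. Then $f[K]$ is a compact subset of $U$, being the continuous image of the compact set $K$, so that $\varepsilon=\di\big(f[K],Y\setminus U\big)>0$. If $\widehat{\di}(f,g)<\varepsilon$ then for every $x\in K$ we have $\di\big(g(x),f(x)\big)<\varepsilon\le\di\big(f(x),Y\setminus U\big)$, which forces $g(x)\in U$; hence $g\in[K,U]$. Thus the uniform ball of radius $\varepsilon$ around $f$ is contained in $[K,U]$, giving the desired inclusion of topologies.

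The reverse inclusion is the more delicate half, and the point at which compactness of $X$ is genuinely used. Given $f\in\CC(X,Y)$ and $\varepsilon>0$, the goal is to exhibit a compact-open neighborhood of $f$ contained in the uniform ball $\{g:\widehat{\di}(f,g)<\varepsilon\}$. Using the continuity of $f$, for each $x\in X$ I would pick an open $V_x\ni x$ with $f[V_x]\subseteq\Ball\big(f(x),\varepsilon/3\big)$, and then extract a finite subcover $V_{x_1},\dots,V_{x_n}$ of $X$. The temptation here is to shrink this open cover to a closed one, but that would require a normality (hence separation) hypothesis that the statement does not provide; instead I would set
$$
K_i=f^{-1}\big(\bar{\Ball}(f(x_i),\varepsilon/3)\big)\quad\text{and}\quad U_i=\Ball\big(f(x_i),\varepsilon/2\big).
$$
Each $K_i$ is closed, hence compact because $X$ is compact, and the $K_i$ cover $X$ since $V_{x_j}\subseteq K_j$ for each $j$. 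Moreover $f[K_i]\subseteq\bar{\Ball}(f(x_i),\varepsilon/3)\subseteq U_i$, so $f\in\bigcap_i[K_i,U_i]$.

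Finally I would verify that this compact-open neighborhood does the job. If $g\in\bigcap_i[K_i,U_i]$, then for any $x\in X$ I choose $j$ with $x\in K_j$; then $g(x)\in U_j$ while $f(x)\in\bar{\Ball}(f(x_j),\varepsilon/3)\subseteq U_j$, whence $\di\big(f(x),g(x)\big)\le\di\big(f(x),f(x_j)\big)+\di\big(f(x_j),g(x)\big)<\varepsilon/3+\varepsilon/2<\varepsilon$. Taking the maximum over $x\in X$ yields $\widehat{\di}(f,g)<\varepsilon$, so that $\bigcap_i[K_i,U_i]\subseteq\{g:\widehat{\di}(f,g)<\varepsilon\}$, as required. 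The step I expect to be the main obstacle is precisely this second direction, namely producing the compact sets $K_i$ in the absence of any separation axiom on $X$; the device of taking $K_i$ to be the $f$-preimage of a closed ball, rather than shrinking the cover $\{V_{x_i}\}$, is what makes the argument go through in the stated generality.
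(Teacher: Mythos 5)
Your proof is correct. Note that the paper itself does not prove Theorem \ref{theorem_compact_open}; it cites \cite[Theorems 46.2 and 46.8]{munkres} and \cite[Proposition 1.3.3]{van_mill_book}, so the comparison is really with the textbook argument. Your two inclusions are the standard ones: the easy direction uses exactly the positive distance between the compact set $f[K]$ and the closed set $Y\setminus U$, and the hard direction follows the usual cover-and-refine scheme of Munkres. The genuinely good touch is your choice $K_i=f^{-1}\big(\bar{\Ball}(f(x_i),\varepsilon/3)\big)$: taking $f$-preimages of closed balls produces closed (hence compact) sets covering $X$ directly, so you never need to take closures of the sets $V_{x_i}$ or invoke any regularity or Hausdorffness of $X$. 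This matters in the present setting, since the paper explicitly assumes no separation axioms on its spaces, and it is precisely the point you flag as the main obstacle. Two small remarks you could make explicit but which do not affect correctness: the maximum defining $\widehat{\di}$ is attained because $X$ is compact (so $\widehat{\di}$ is well defined), and in your final estimate each $\di\big(f(x),g(x)\big)<5\varepsilon/6$, so the attained maximum is at most $5\varepsilon/6<\varepsilon$, which is what justifies the strict inequality $\widehat{\di}(f,g)<\varepsilon$.
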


\section{The inductive convergence criterion}\label{section_convergence}

In the proof of Theorem \ref{theorem_main}, we will construct various sequences of homeomorphisms (one for each coordinate). To make sure that all of these sequences converge to homeomorphisms, we will apply the so-called \emph{Inductive Convergence Criterion}, as given by Theorem \ref{theorem_convergence}. The intuition behind this criterion is that an ``infinite composition'' of homeomorphisms $\cdots\circ h_1\circ h_0$ will yield a homeomorphism if each $h_{n+1}$ causes ``very small changes'' with respect to a complete metric on the underlying space. One important thing to notice about this result is that the first homeomorphism in the sequence can be chosen without any restriction.

Several versions of this criterion can be found in the literature, but the earliest one seems to be due to Anderson and Bing \cite[Theorem 4.2]{anderson_bing} (see also \cite[Lemma 5.1]{anderson_curtis_van_mill}, or \cite[Exercise 1.6.1 and its solution on pages 533-534]{van_mill_book}). Although all these results accomplish more or less the same thing, here we will follow the approach of \cite[page 195]{yang}, since it is the most direct (that is, it makes no mention of open covers). However, we feel that the proof given in \cite{yang} does not give enough details as to why $H^\ast$ should be the inverse of $H$ (these functions are defined exactly as in the proof of Theorem \ref{theorem_convergence}).\footnote{\,To be fair, mutatis mutandis, neither does the proof given in \cite{anderson_bing}. In fact, we suspect that this might be the reason for the different approach taken in \cite{anderson_curtis_van_mill}.} Therefore, we attempted to be completely rigorous on this point.

Let $X$ be a metric space with distance $\di$, let $x_{m,n}\in X$ for $m,n\in\omega$, and let $x\in X$. We will write $x_{m,n}\to x$ to mean that for every $\varepsilon >0$ there exists $K\in\omega$ such that $\di(x_{m,n},x)<\varepsilon$ whenever $m,n\geq K$. The usual argument about the uniqueness of limits shows that if $x_{m,n}\to x$ and $x_{m,n}\to x'$ then $x=x'$.

\begin{theorem}\label{theorem_convergence}
Let $X$ be a complete metric space with distance $\di$, and let $h_n\in\HH(X)$ for $n\in\omega$. Set $H_n=h_n\circ\cdots\circ h_0$ for $n\in\omega$. Assume that the following conditions hold for each $n$:
\begin{enumerate}
\item $\di\big(h_{n+1}(x),x\big)\leq 2^{-n}$ for every $x\in X$,
\item $\di\big(H_{n+1}^{-1}(x),H_n^{-1}(x)\big)\leq 2^{-n}$ for every $x\in X$.
\end{enumerate}
Then $(H_n:n\in\omega)$ converges uniformly to an element of $\HH(X)$.
\end{theorem}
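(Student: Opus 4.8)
The plan is to set $H=\lim_n H_n$ and $H^\ast=\lim_n H_n^{-1}$ as uniform limits, verify that both exist, and then prove that $H\in\HH(X)$ with $H^{-1}=H^\ast$. First I would observe that condition (1) yields the forward estimate $\di\big(H_{n+1}(x),H_n(x)\big)=\di\big(h_{n+1}(H_n(x)),H_n(x)\big)\leq 2^{-n}$ for every $x\in X$, so Proposition \ref{proposition_convergence} (applied to $f_n=H_n$) shows that $(H_n:n\in\omega)$ converges uniformly to a continuous function $H$. Condition (2) is literally the hypothesis of Proposition \ref{proposition_convergence} for the sequence $f_n=H_n^{-1}$, hence $(H_n^{-1}:n\in\omega)$ converges uniformly to a continuous function $H^\ast$. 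It then remains only to check that $H\circ H^\ast$ and $H^\ast\circ H$ are both the identity.

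The subtlety—and the reason the double-index limit notation $x_{m,n}\to x$ was introduced—is that composition is not jointly continuous with respect to uniform convergence, so one cannot simply pass to the limit in the identity $H_n\circ H_n^{-1}=\mathsf{id}$. Instead, fix $x\in X$ and consider $y_{m,n}=H_m\big(H_n^{-1}(x)\big)$. I claim that $y_{m,n}\to H\big(H^\ast(x)\big)$. To see this, split
$$
\di\big(H_m(H_n^{-1}(x)),H(H^\ast(x))\big)\leq \di\big(H_m(H_n^{-1}(x)),H(H_n^{-1}(x))\big)+\di\big(H(H_n^{-1}(x)),H(H^\ast(x))\big).
$$
The first term is small once $m$ is large, \emph{uniformly} in the argument $H_n^{-1}(x)$, precisely because $H_m\to H$ uniformly; this is the one place where uniform (rather than merely pointwise) convergence of the outer sequence is essential. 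The second term is small once $n$ is large, since $H_n^{-1}(x)\to H^\ast(x)$ and $H$ is continuous at $H^\ast(x)$. Hence for all sufficiently large $m,n$ the left-hand side falls below any prescribed $\varepsilon>0$, which is exactly the assertion $y_{m,n}\to H(H^\ast(x))$. Finally, evaluating this double limit along the diagonal $m=n$, where $y_{n,n}=H_n(H_n^{-1}(x))=x$, forces $\di\big(x,H(H^\ast(x))\big)<\varepsilon$ for every $\varepsilon>0$, so $H(H^\ast(x))=x$.

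The reverse composition is handled symmetrically: setting $z_{m,n}=H_m^{-1}\big(H_n(x)\big)$, the same triangle-inequality split (now using that $H_m^{-1}\to H^\ast$ uniformly and that $H^\ast$ is continuous while $H_n(x)\to H(x)$) gives $z_{m,n}\to H^\ast(H(x))$, whereas the diagonal values $z_{n,n}=x$ force $H^\ast(H(x))=x$. Together these show that $H$ is a bijection of $X$ onto itself whose inverse $H^\ast$ is continuous, that is, $H\in\HH(X)$; and the uniform convergence $H_n\to H$ was already established in the first step. I expect the only genuine obstacle to be the joint-continuity issue just described, namely justifying the interchange of the two limits, which the double-index formulation is designed to resolve; the two convergence claims themselves follow immediately from Proposition \ref{proposition_convergence}.
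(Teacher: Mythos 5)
Your proposal is correct, and its first half coincides exactly with the paper's proof: both apply Proposition \ref{proposition_convergence} to get uniform limits $H$ of $(H_n)$ (via the estimate $\di\big(H_{n+1}(x),H_n(x)\big)=\di\big(h_{n+1}(H_n(x)),H_n(x)\big)\leq 2^{-n}$ from condition (1)) and $H^\ast$ of $(H_n^{-1})$ (from condition (2)), and both then work with double-indexed limits, proving $H_m\big(H_n^{-1}(x)\big)\to H\big(H^\ast(x)\big)$ and $H_m^{-1}\big(H_n(x)\big)\to H^\ast\big(H(x)\big)$ by precisely your triangle-inequality split (uniform convergence of the outer sequence handles one term, continuity of the limit function together with convergence of the inner sequence handles the other; these are the paper's Claims 2 and 4). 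Where you genuinely diverge is in how these double limits are identified with $x$. The paper proves two further claims (its Claims 1 and 3): that $H_m^{-1}\big(H_n(x)\big)\to x$ and $H_m\big(H_n^{-1}(x)\big)\to x$, each by a telescoping estimate that reuses conditions (2) and (1) respectively, and then invokes uniqueness of double limits. You instead observe that the diagonal entries $H_n\big(H_n^{-1}(x)\big)=x$ and $H_n^{-1}\big(H_n(x)\big)=x$ are constant, and that a double limit in the sense defined in the paper is inherited by the diagonal sequence ($m=n\geq K$ is a permitted instance of $m,n\geq K$), which pins the limit down to $x$ with no further estimate. This is a genuine simplification: it cuts the paper's four claims down to two, it uses hypotheses (1) and (2) only through Proposition \ref{proposition_convergence}, and it isolates the cleaner general fact that whenever homeomorphisms satisfy $H_n\to H$ and $H_n^{-1}\to H^\ast$ uniformly (indeed, pointwise convergence of the inner sequence suffices in each claim, with uniform convergence needed only for the outer one), then $H\in\HH(X)$ with $H^{-1}=H^\ast$. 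Nothing is lost relative to the paper's route; the telescoping claims become redundant once the diagonal observation is made.
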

\begin{proof}
Proposition \ref{proposition_convergence} and condition $(1)$ ensure that $(H_n:n\in\omega)$ converges uniformly to some $H\in\CC(X,X)$. Similary, Proposition \ref{proposition_convergence} and condition $(2)$ ensure that $(H^{-1}_n:n\in\omega)$ converges uniformly to some $H^\ast\in\CC(X,X)$. It remains to show that $H^\ast$ is the inverse of $H$. So fix $x\in X$, then set
$$
x_{m,n}=H_m^{-1}\big(H_n(x)\big)\text{ and }y_{m,n}=H_m\big(H_n^{-1}(x)\big).
$$
for $m,n\in\omega$. Clearly, the following four claims will conclude the proof.

\noindent\textbf{Claim 1.} $x_{m,n}\to x$.

\noindent\textit{Proof.} Pick $\varepsilon >0$. Fix $K\in\omega$ such that $2^{-K}+2^{-(K+1)}+\cdots <\varepsilon$. Assume that $m>n\geq K$, and set $y=H_n(x)$. Then
\begin{align}
\di(x_{m,n},x)&= \di\big(H_m^{-1}(y),H_n^{-1}(y)\big)\nonumber\\
&\leq\di\big(H_m^{-1}(y),H_{m-1}^{-1}(y)\big)+\cdots +\di\big(H_{n+1}^{-1}(y),H_n^{-1}(y)\big)\nonumber\\
&\leq 2^{-(m-1)}+\cdots +2^{-n}\nonumber\\
&<\varepsilon\nonumber
\end{align}
by condition $(2)$. A similar argument yields the same inequality when $n>m\geq K$, and the case $m=n\geq K$ is trivial. $\blacksquare$

\noindent\textbf{Claim 2.} $x_{m,n}\to H^\ast\big(H(x)\big)$.

\noindent\textit{Proof.} Pick $\varepsilon >0$. Since $H^\ast$ is continuous at $H(x)$, we can fix $\delta >0$ such that
$$
\di\Big(H^\ast\big(H(x)\big),H^\ast(y)\Big)<\frac{\varepsilon}{2}\text{ whenever }y\in X\text{ and }\di\big(H(x),y\big)<\delta.
$$
Since $(H_n:n\in\omega)$ converges to $H$, we can fix $N\in\omega$ such that
$$
\di\big(H_n(x),H(x)\big)<\delta\text{ whenever }n\geq N.
$$
Since $(H^{-1}_m:m\in\omega)$ converges uniformly to $H^\ast$, we can fix $M\in\omega$ such that
$$
\di\big(H^{-1}_m(y),H^\ast(y)\big)<\frac{\varepsilon}{2}\text{ whenever }m\geq M\text{ and }y\in X.
$$
Finally, set $K=\maxi\{M,N\}$. Then
\begin{align}
\di\Big(x_{m,n},H^\ast\big(H(x)\big)\Big)&=\di\Big(H_m^{-1}\big(H_n(x)\big),H^\ast\big(H(x)\big)\Big)\nonumber\\
&\leq\di\Big(H_m^{-1}\big(H_n(x)\big),H^\ast\big(H_n(x)\big)\Big)+\di\Big(H^\ast\big(H_n(x)\big),H^\ast\big(H(x)\big)\Big)\nonumber\\
&<\frac{\varepsilon}{2}+\frac{\varepsilon}{2}\nonumber\\
&=\varepsilon\nonumber
\end{align}
whenever $m,n\geq K$. $\blacksquare$

\noindent\textbf{Claim 3.} $y_{m,n}\to x$.

\noindent\textit{Proof.} Proceed as in the proof of Claim 1, except that condition $(1)$ should be used instead of condition $(2)$. $\blacksquare$

\noindent\textbf{Claim 4.} $y_{m,n}\to H\big(H^\ast(x)\big)$.

\noindent\textit{Proof.} Proceed as in the proof of Claim 2. $\blacksquare$
\end{proof}

\section{General position: general facts}\label{section_general}

As it often happens with mathematical ideas, the broad concept of general position can be specified in many different ways, depending on the context. The following definition is the one that will be useful in the present context. Ultimately, in the proof of Theorem \ref{theorem_main}, general position will permit us to construct the desired homeomorphism of a product by working coordinate-wise (so that it will be possible to apply Theorem \ref{theorem_convergence}).

\begin{definition}\label{definition_general_position}
Let $\kappa$ be a cardinal, and let $X_\alpha$ for $\alpha\in\kappa$ be spaces. Set $X=\prod_{\alpha\in\kappa}X_\alpha$. A set $D\subseteq X$ is in \emph{general position} (or is \emph{very thin}) if
$$
\{\alpha\in\kappa:d(\alpha)\neq e(\alpha)\}=\kappa\text{ for every }\{d,e\}\in [D]^2.
$$
We will say that $X$ has the \emph{general position property} ($\GPP$) if for every countable $D\subseteq X$ there exists $h\in\HH(X)$ such that $h[D]$ is in general position.
\end{definition}

Trivially, every product of $\kappa$ factors has the $\GPP$ if $\kappa\leq 1$. Furthermore, it should be clear that the $\GPP$ is not a topological property, but it depends on the product structure. For example, set $X=2\times 2^\omega$ and $Y=2^\omega$, where we view $Y$ as a product with only one factor. It is obvious that $X$ and $Y$ are homeomorphic, but no subset of $X$ containing more than $2$ points is in general position.

The $\GPP$ will be one of the items in the ``checklist'' that guarantees the countable dense homogeneity of a product (see Theorem \ref{theorem_main}). In fact, the main results of this section are Lemmas \ref{lemma_wgpp} and \ref{lemma_gpp}, which reduce the $\GPP$ of a large product to the $\GPP$ of certain countably infinite subproducts, plus the existence of certain pairs of functions (see Definition \ref{definition_convenient_pair}).

We begin, however, with two more fundamental facts. Proposition \ref{proposition_cdh_implies_gpp} shows that, under rather mild assumptions, the $\GPP$ is a weakening of countable dense homogeneity; this proposition follows easily from Lemma \ref{lemma_exists_gp}. According to \cite{gruenhage_natkaniec_piotrowski}, the first part of Lemma \ref{lemma_exists_gp} is a particular case of a result from \cite{piotrowski}. The second part of Lemma \ref{lemma_exists_gp} will be useful in the proof of Lemma \ref{lemma_gpp_manifolds_auxiliary}, where we will need to ``chase'' countable sets away from the pseudoboundary. Recall that a \emph{$\pi$-base} (or, in the terminology of \cite{oxtoby}, a \emph{pseudobase}) for a space $X$ is a collection $\BB$ of non-empty open subsets of $X$ such that for every non-empty open subset $U$ of $X$ there exists $V\in\BB$ such that $V\subseteq U$.

\begin{lemma}\label{lemma_exists_gp}
Let $\kappa\leq\omega$ be a cardinal, and let $X_\alpha$ for $\alpha\in\kappa$ be spaces. Assume that each $X_\alpha$ is crowded and has a countable $\pi$-base. Set $X=\prod_{\alpha\in\kappa}X_\alpha$. Then there exists a countable dense subset $D$ of $X$ that is in general position. Furthermore, if each $X_\alpha$ is also Hausdorff and Baire, and $M$ is a meager subset of $X$, then it is possible to ensure that $D\cap M=\varnothing$.
\end{lemma}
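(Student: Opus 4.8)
The plan is to construct the required set $D=\{d_n:n\in\omega\}$ by a single recursion, choosing the points one coordinate at a time, and using crowdedness to force all coordinates of distinct points to disagree. First I would observe that $X$ itself has a countable $\pi$-base. Indeed, fix a countable $\pi$-base $\BB_\alpha$ for each $X_\alpha$; since $\kappa\leq\omega$, the family $\BB$ of all products $\prod_{\alpha\in\kappa}V_\alpha$, where $V_\alpha\in\BB_\alpha$ for $\alpha$ in some finite $F\subseteq\kappa$ and $V_\alpha=X_\alpha$ otherwise, is countable and forms a base (hence a $\pi$-base) for $X$. Enumerate $\BB=\{U_n:n\in\omega\}$.

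For the basic statement, suppose $d_0,\dots,d_{n-1}$ have been chosen. The set $U_n$ constrains only the coordinates in a finite $F\subseteq\kappa$, to non-empty open sets $W_\alpha\subseteq X_\alpha$; set $W_\alpha=X_\alpha$ for $\alpha\notin F$. Each $W_\alpha$ is a non-empty open subset of a crowded space, hence infinite, so I can pick $d_n(\alpha)\in W_\alpha\setminus\{d_i(\alpha):i<n\}$, avoiding the finitely many earlier values. The point $d_n$ then lies in $U_n$ and differs from every $d_i$ with $i<n$ in \emph{every} coordinate. Consequently $D$ meets every member of $\BB$, so it is dense; and its points are automatically pairwise distinct and pairwise disagree on all of $\kappa$, so $D$ is in general position. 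Note that only crowdedness, and no separation axiom, is used here — which is exactly the reason for adopting ``every non-empty open set is infinite'' as the definition of crowded.

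For the furthermore part I would fold the avoidance of $M$ into the same recursion. Under the Hausdorff hypothesis each $X_\alpha$ is $\mathsf{T}_1$, so every singleton $\{x\}$ is closed, and crowdedness forces $\{x\}$ to have empty interior; since each $\pi_\alpha$ is continuous and open, $\pi_\alpha^{-1}[\{x\}]$ is a closed nowhere dense subset of $X$. Hence at stage $n$ the set $B_n=M\cup\bigcup_{i<n}\bigcup_{\alpha\in\kappa}\pi_\alpha^{-1}[\{d_i(\alpha)\}]$ is the union of the meager set $M$ with countably many closed nowhere dense sets, so $B_n$ is meager. The decisive point is that $X$ is a Baire space: a countable product of Baire spaces with countable $\pi$-bases is Baire (this is where Oxtoby's analysis of $\pi$-bases and pseudo-complete spaces \cite{oxtoby} enters; at any rate, in our intended applications the factors are Polish, whence $X$ is Polish and \emph{a fortiori} Baire). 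Since $X$ is Baire, $U_n$ is non-meager, so I can choose $d_n\in U_n\setminus B_n$. This single choice simultaneously secures $d_n\in U_n$ (density), $d_n(\alpha)\neq d_i(\alpha)$ for all $i<n$ and all $\alpha$ (general position), and $d_n\notin M$.

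The only non-routine input is the Baire-ness of the product $X$, and this is where I expect the real work to lie, since a product of Baire spaces is not Baire in general; the countable $\pi$-base hypothesis on the factors is precisely what rescues the situation. Everything else is a bookkeeping recursion driven entirely by crowdedness and, for the second half, by the observation that each ``fiber'' $\pi_\alpha^{-1}[\{x\}]$ is closed nowhere dense.
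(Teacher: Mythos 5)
Your proof is correct. For the first part it coincides with the paper's argument: both recursively pick $d_n\in U_n$ coordinate-by-coordinate, using crowdedness to avoid the finitely many earlier values $d_i(\alpha)$. (One small inaccuracy: the countable family of boxes built from $\pi$-bases of the factors is in general only a $\pi$-base for $X$, not a base, since the $\BB_\alpha$ are not bases; this is harmless, as a $\pi$-base is all you use.) For the ``furthermore'' part, however, you take a genuinely different route. The paper makes a single Baire-category argument in $X^\omega$: it encodes a candidate countable set as one point $x\in X^\omega$, shows that the sets $M_0$ (sequences whose range is not dense), $M_1$ (sequences with a coordinate collision $x(m)(\alpha)=x(n)(\alpha)$) and $M_2$ (sequences meeting $M$) are meager, applies Oxtoby's theorem to conclude that $X^\omega$ is Baire, and picks $x\notin M_0\cup M_1\cup M_2$ in one shot. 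You instead keep the recursion from the first part and, at stage $n$, choose $d_n\in U_n\setminus B_n$, where $B_n=M\cup\bigcup_{i<n}\bigcup_{\alpha\in\kappa}\pi_\alpha^{-1}[\{d_i(\alpha)\}]$ is meager, applying Oxtoby's theorem to $X$ itself. Both routes hinge on the same nontrivial input (a product of Baire spaces with countable $\pi$-bases is Baire), and your identification of this as the only non-routine step is accurate. Your version is slightly more economical in hypotheses: since the earlier points are already fixed at each stage, you only need singletons to be closed and nowhere dense in the factors, i.e.\ $\mathsf{T}_1$ plus crowdedness, whereas the paper's collision sets $M_1$ are handled using Hausdorffness (closedness of the diagonal). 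The paper's version buys a stronger conclusion for free: the set of $x\in X^\omega$ whose range is a dense set in general position avoiding $M$ is comeager, so ``almost every'' countable sequence works, and the category argument stays cleanly separated from the recursion.
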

\begin{proof}
Let $\{U_n:n\in\omega\}$ be a $\pi$-base for $X$, and assume without loss of generality that each $U_n=\prod_{\alpha\in\kappa}U_{n,\alpha}$ for suitable (non-empty) open subsets $U_{n,\alpha}$ of $X_\alpha$. Pick an arbitrary $d_0\in U_0$. Given $1\leq n<\omega$ and $d_k\in U_k$ for $k<n$, define $d_n\in U_n$ by picking
$$
d_n(\alpha)\in U_{n,\alpha}\setminus\{d_k(\alpha):k<n\}
$$
for $\alpha\in\kappa$; this is possible since each $X_\alpha$ is crowded. Setting $D=\{d_n:n\in\omega\}$ will then conclude the proof of the first part of the lemma.

In order to prove the second part, assume that each $X_\alpha$ is Hausdorff and Baire, and let $M$ be a meager subset of $X$. Make the following definitions:
\begin{itemize}
\item $M_0=\{x\in X^\omega:\{x(n):n\in\omega\}\text{ is not dense in }X\}$,
\item $M_1=\bigcup_{\{m,n\}\in [\omega]^2}\bigcup_{\alpha\in\kappa}\{x\in X^\omega:x(m)(\alpha)=x(n)(\alpha)\}$,
\item $M_2=\bigcup_{n\in\omega}\{x\in X^\omega:x(n)\in M\}$.
\end{itemize}
Using the assumption that each $X_\alpha$ (hence $X$) has a countable $\pi$-base, it is easy to see that $M_0$ is meager. On the other hand, using the assumption that each $X_\alpha$ is Hausdorff and crowded, one can show that $M_1$ is meager. Finally, it is clear that $M_2$ is meager. Since $X^\omega$ is a product of Baire spaces with countable $\pi$-bases, it is a Baire space by \cite[Theorem 3]{oxtoby}. Therefore, it is possible to pick $x\in X^\omega\setminus (M_0\cup M_1\cup M_2)$. It is clear that $D=\{x(n):n\in\omega\}$ will be as desired.
\end{proof}

\begin{proposition}\label{proposition_cdh_implies_gpp}
Let $\kappa\leq\omega$ be a cardinal, and let $X_\alpha$ for $\alpha\in\kappa$ be spaces. Assume that each $X_\alpha$ is crowded and has a countable $\pi$-base. Set $X=\prod_{\alpha\in\kappa}X_\alpha$. If $X$ is $\CDH$ then $X$ has the $\GPP$.
\end{proposition}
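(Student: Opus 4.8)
The plan is to reduce the general position of an arbitrary countable set to the case of a countable \emph{dense} set, where countable dense homogeneity can be applied directly. The first observation is that general position is inherited by subsets: if $E\subseteq X$ is in general position and $E'\subseteq E$, then every $\{d,e\}\in [E']^2$ is also an element of $[E]^2$, so the defining condition $\{\alpha\in\kappa:d(\alpha)\neq e(\alpha)\}=\kappa$ holds for $E'$ as well. Thus it suffices to map a given countable set \emph{inside} some countable dense set that is already in general position.

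Next I would produce such a target. Since each $X_\alpha$ is crowded and has a countable $\pi$-base, Lemma \ref{lemma_exists_gp} furnishes a countable dense subset $E$ of $X$ that is in general position.

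Now fix an arbitrary countable $D\subseteq X$; the goal is to find $h\in\HH(X)$ with $h[D]$ in general position. Since $X$ is $\CDH$, it is in particular separable (separability is built into the definition of $\CDH$), so I can fix a countable dense subset $F$ of $X$ and set $D'=D\cup F$. Then $D'$ is a countable dense subset of $X$ containing $D$. Applying countable dense homogeneity to the pair $(D',E)$ yields $h\in\HH(X)$ with $h[D']=E$. Consequently $h[D]\subseteq h[D']=E$, and since $E$ is in general position, so is its subset $h[D]$ by the hereditarity noted above, which is exactly the desired conclusion.

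There is no serious obstacle here: the argument is essentially a bookkeeping combination of Lemma \ref{lemma_exists_gp} with the definition of $\CDH$. The only points requiring care are the (trivial) hereditarity of general position under taking subsets, and the observation that separability of $X$ is available, which is what lets us pad the given countable set $D$ to a countable dense set so that $\CDH$ becomes applicable.
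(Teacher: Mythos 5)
Your proof is correct and is essentially the paper's intended argument: the paper's proof consists of the single line ``Use the first part of Lemma \ref{lemma_exists_gp},'' and your write-up (pad $D$ to a countable dense set using separability, apply $\CDH$ to map it onto the general-position dense set supplied by the lemma, and invoke hereditarity of general position under subsets) is exactly the fleshed-out version of that reduction.
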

\begin{proof}
Use the first part of Lemma \ref{lemma_exists_gp}.
\end{proof}

By considering the example $X$ given after Definition \ref{definition_general_position}, one sees that the assumption that each $X_\alpha$ is crowded cannot be dropped in either of the above results. Furthermore, according to \cite{gruenhage_natkaniec_piotrowski}, it was proved independently by Schr\"{o}der \cite{schroder} and Szeptycki \cite{szeptycki} that there exists a countable crowded space $X$ such that $X^2$ has no dense subset in general position (in fact, as shown in \cite[Lemma 2.1]{hutchison_gruenhage}, this example can be realized as a countable dense subpace of $2^\cccc$). For other results concerning the $\GPP$, see Theorem \ref{theorem_gpp_compact}, \cite[Theorem 4.5]{dobrowolski_krupski_marciszewski}, \cite[Section 2]{gruenhage_natkaniec_piotrowski}, \cite[Theorem 2.3]{hernandez_gutierrez_arrow} and \cite{medini}.\footnote{\,In some cases, one has to inspect the proofs to realize that the $\GPP$ is involved.}

Next, we introduce the auxiliary concepts of weak general position and convenient pair. As Lemma \ref{lemma_wgpp} will show, the existence of convenient pairs is useful in bringing sets in weak general position. At that point, as Lemma \ref{lemma_gpp} will show, obtaining general position is fairly straightforward.

\begin{definition}\label{definition_weak_general_position}
Let $\kappa$ be a cardinal, and let $X_\alpha$ for $\alpha\in\kappa$ be spaces. Set $X=\prod_{\alpha\in\kappa}X_\alpha$. A set $D\subseteq X$ is in \emph{weak general position} if
$$
|\{\alpha\in\kappa:d(\alpha)\neq e(\alpha)\}|=\kappa\text{ for every }\{d,e\}\in [D]^2.
$$
We will say that $X$ has the \emph{weak general position property} ($\WGPP$) if for every countable $D\subseteq X$ there exists $h\in\HH(X)$ such that $h[D]$ is in weak general position.
\end{definition}

It is clear that every set in general position is in weak general position, and that the two notions are equivalent in the case of finite products.

\begin{definition}\label{definition_convenient_pair}
Let $X$ and $Y$ be spaces. We will say that $(s,t)$ is a \emph{convenient pair for $(X,Y)$} if the following conditions hold:
\begin{itemize}
\item $s,t\in\CC(X\times Y,X)$,
\item $s\big(t(x,y),y\big)=x$ for all $(x,y)\in X\times Y$,
\item $t\big(s(x,y),y\big)=x$ for all $(x,y)\in X\times Y$,
\end{itemize}
Given $A\subseteq X$ and $B\subseteq Y$, we will say that convenient pair $(s,t)$ for $(X,Y)$ is \emph{focused on $(A,B)$} if the following additional condition holds:
\begin{itemize}
\item If $x\in A$ and $y,y'\in B$ are such that $y\neq y'$, then $s(x,y)\neq s(x,y')$.
\end{itemize}
\end{definition}

Notice that letting $s=t=\pi$, where $\pi$ is the projection on the first coordinate, gives a trivial example of convenient pair. However, this convenient pair will only be focused on $(A,B)$ for trivial choices of $(A,B)$, hence it would not be possible to apply Lemma \ref{lemma_wgpp} to it. So, one might want to look for convenient pairs that are ``perturbations'' of the projection. This is the intuition behind the results of Section \ref{section_manifolds}, where we will construct non-trivial convenient pairs in the context of manifolds with boundary. On the other hand, as we will see in Section \ref{section_groups}, non-trivial convenient pairs are readily available in the context of topological groups.

After constructing suitable convenient pairs, the following two lemmas will allow us to obtain the desired results (see Theorems \ref{theorem_gpp_groups} and \ref{theorem_gpp_manifolds}).

\begin{lemma}\label{lemma_wgpp}
Let $\kappa$ be an infinite cardinal, and let $X_\alpha$ for $\alpha\in\kappa$ be spaces. Set $X=\prod_{\alpha\in\kappa}X_\alpha$, and let $D\subseteq X$ be countable. Assume that the following conditions hold:
\begin{itemize}
\item $\pi_0\re D$ is injective,
\item For every $\alpha\in\kappa\setminus\{0\}$ there exists a convenient pair for $(X_\alpha,X_0)$ focused on $(\pi_\alpha[D],\pi_0[D])$.
\end{itemize}
Then there exists $h\in\HH(X)$ such that $h[D]$ is in weak general position.
\end{lemma}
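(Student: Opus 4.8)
The plan is to build the homeomorphism $h$ from the convenient pairs, using coordinate $0$ as a universal ``reference coordinate.'' For each $\alpha\in\kappa\setminus\{0\}$, write $(s_\alpha,t_\alpha)$ for a convenient pair for $(X_\alpha,X_0)$ focused on $(\pi_\alpha[D],\pi_0[D])$, as supplied by the hypothesis. The basic building block is the \emph{twist at $\alpha$}, namely the assignment $x\mapsto s_\alpha(x(\alpha),x(0))$. For a fixed value $y$ of the $0$-th coordinate, the maps $s_\alpha(\cdot,y)$ and $t_\alpha(\cdot,y)$ are mutually inverse homeomorphisms of $X_\alpha$ (this is exactly what the two defining equations of a convenient pair say), so any map that fixes the $0$-th coordinate and, on each remaining coordinate, either twists or does nothing, will automatically lie in $\HH(X)$, with inverse obtained by replacing each $s_\alpha$ by the corresponding $t_\alpha$. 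Continuity in both directions is immediate, since everything is assembled coordinatewise from the continuous maps $s_\alpha,t_\alpha$ and the projections.

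The crucial feature of the twist is provided by the focusing hypothesis: if $d,e\in D$ agree at a coordinate $\alpha\neq 0$, say $d(\alpha)=e(\alpha)=a$, then after twisting they differ there, because $d(0)\neq e(0)$ (here I use injectivity of $\pi_0\re D$) forces $s_\alpha(a,d(0))\neq s_\alpha(a,e(0))$. Thus the twist converts agreements into disagreements. The catch, and what I expect to be the main obstacle, is that the twist can simultaneously convert disagreements into agreements: nothing in the definition of a convenient pair prevents $s_\alpha(d(\alpha),d(0))=s_\alpha(e(\alpha),e(0))$ when $d(\alpha)\neq e(\alpha)$ and $d(0)\neq e(0)$. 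Consequently, twisting every coordinate at once can be catastrophic; one can arrange a pair that originally differs in all coordinates but is collapsed by the global twist into a pair agreeing everywhere except at $0$. So the twist must be applied selectively, and the selection must be tailored to each pair without the pairs interfering with one another.

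The fix is to give each pair its own block of coordinates on which to be separated. Since $\kappa$ is infinite and $[D]^2$ is countable, I will fix a partition $\kappa\setminus\{0\}=\bigsqcup_{n\in\omega}A_n$ with $|A_n|=\kappa$ for every $n$, together with an enumeration $[D]^2=\{p_n:n\in\omega\}$ (allowing repetitions if $[D]^2$ is finite), where $p_n=\{d_n,e_n\}$. For each $n$ set $S_n=\{\alpha\in A_n:d_n(\alpha)=e_n(\alpha)\}$ and $T_n=A_n\setminus S_n$; since $|A_n|=\kappa$ and $\kappa$ is infinite, at least one of $S_n,T_n$ has cardinality $\kappa$. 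I will then twist all coordinates in $A_n$ precisely when $|S_n|=\kappa$, and leave all coordinates in $A_n$ untouched when instead $|T_n|=\kappa$. Because coordinate $0$ is never twisted and the blocks are disjoint, these choices glue into a single $h\in\HH(X)$ of the form described above.

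Verifying weak general position of $h[D]$ then reduces to checking one pair $p_n$ at a time. If $|S_n|=\kappa$, I twisted $A_n$, and the focusing observation shows that $h(d_n)$ and $h(e_n)$ differ at every $\alpha\in S_n$, hence on $\kappa$-many coordinates. If instead $|T_n|=\kappa$, I left $A_n$ fixed, so $h(d_n)$ and $h(e_n)$ still differ at every $\alpha\in T_n$, again on $\kappa$-many coordinates. Either way the pair is separated on $\kappa$-many coordinates inside its own block $A_n$; what the twists on the other blocks do to $p_n$ is irrelevant, which is precisely why assigning a private block to each pair dissolves the interference problem. Since $h$ is injective, every element of $[h[D]]^2$ is of the form $\{h(d_n),h(e_n)\}$ for some $n$, and this proves that $h[D]$ is in weak general position.
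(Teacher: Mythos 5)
Your proof is correct, and it shares with the paper the core mechanism that the hypotheses force: fix the $0$-th coordinate, twist a selected set of coordinates via $x\mapsto s_\alpha\big(x(\alpha),x(0)\big)$ (with the inverse homeomorphism given by the $t_\alpha$'s), and use the focusing property together with the injectivity of $\pi_0\re D$ to turn agreements into disagreements. Where you genuinely diverge is in the combinatorial selection of the twisted coordinates. The paper twists on a \emph{single} set $\Omega\in[\kappa]^\kappa$: setting $\Omega_p=\{\alpha\in\kappa:d(\alpha)\neq e(\alpha)\}$ and $P=\{p\in[D]^2:|\Omega_p|=\kappa\}$, it first proves a claim (by a disjointification over $p\in P$) that $\Omega$ can be chosen with $|\Omega_p\setminus\Omega|=\kappa$ for every $p\in P$; then pairs in $P$ retain $\kappa$-many untouched disagreements outside $\Omega$, while pairs outside $P$ agree on $\kappa$-many coordinates of $\Omega$ and are separated there by focusing. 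You instead pre-partition $\kappa\setminus\{0\}$ into countably many blocks $A_n$ of size $\kappa$, one per pair, and decide twist/no-twist on each block according to your local dichotomy $|S_n|=\kappa$ versus $|T_n|=\kappa$. This makes the interference problem you correctly identify (twisting can also destroy existing disagreements) vanish by construction, since each pair is certified inside its own private block and the other blocks are irrelevant to it; it replaces the paper's global dichotomy $p\in P$ versus $p\notin P$ by a per-block one and absorbs the paper's separate claim about $\Omega$ into the choice of partition. The two arguments are of comparable length and generality---both use only that $[D]^2$ is countable and $\kappa$ is infinite---so this is a difference of organization rather than strength, and your bookkeeping is arguably the more transparent of the two. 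One degenerate case for completeness: if $|D|\leq 1$ then $[D]^2=\varnothing$, your enumeration is empty, and you should simply take $h$ to be the identity.
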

\begin{proof}
Given $p=\{d,e\}\in[D]^2$, define
$$
\Omega_p=\{\alpha\in\kappa:d(\alpha)\neq e(\alpha)\}.
$$
Also set $P=\{p\in [D]^2:|\Omega_p|=\kappa\}$.

\noindent\textbf{Claim.} There exists $\Omega\in [\kappa]^\kappa$ such that $|\Omega_p\setminus\Omega|=\kappa$ for every $p\in P$.

\noindent\textit{Proof.} If $P=\varnothing$ then set $\Omega=\kappa$. Now assume that $P\neq\varnothing$. Begin by fixing pairwise disjoint $\Omega'_p\in [\Omega_p]^\kappa$ for $p\in P$. Then choose $\Omega^0_p,\Omega^1_p\in [\Omega'_p]^\kappa$ for $p\in P$ such that $\Omega^0_p\cap\Omega^1_p=\varnothing$. Finally, set $\Omega=\bigcup_{p\in P}\Omega^0_p$. It is clear that $\Omega$ will be as desired. $\blacksquare$

Let $\Omega$ be as in the above claim, and assume without loss of generality that $0\notin\Omega$. By assumption, for every $\alpha\in\Omega$ we can fix a convenient pair $(s_\alpha,t_\alpha)$ for $(X_\alpha,X_0)$ focused on $(\pi_\alpha[D],\pi_0[D])$. Given $x\in X$ and $\alpha\in\kappa$, make the following definitions:
$$
\left.
\begin{array}{lcl}
& & h(x)(\alpha)= \left\{
\begin{array}{ll}
x(\alpha) & \textrm{if }\alpha\notin\Omega,\\
s_\alpha\big(x(\alpha),x(0)\big) & \textrm{if }\alpha\in\Omega,
\end{array}
\right.
\end{array}
\right.
$$
$$
\left.
\begin{array}{lcl}
& & h^\ast(x)(\alpha)= \left\{
\begin{array}{ll}
x(\alpha) & \textrm{if }\alpha\notin\Omega,\\
t_\alpha\big(x(\alpha),x(0)\big) & \textrm{if }\alpha\in\Omega.
\end{array}
\right.
\end{array}
\right.
$$
Using the fact that each $(s_\alpha,t_\alpha)$ is a convenient pair, one sees that $h,h^\ast\in\CC(X,X)$ and $h^\ast=h^{-1}$. Hence $h\in\HH(X)$.

It remains to show that $h[D]$ is in weak general position. Clearly, this is equivalent to showing that $|\{\alpha\in\kappa:h(d)(\alpha)\neq h(e)(\alpha)\}|=\kappa$ for every $\{d,e\}\in [D]^2$. So pick $p=\{d,e\}\in [D]^2$. First assume that $p\notin P$. Observe that $d(0)\neq e(0)$ by the injectivity of $\pi_0\upharpoonright D$. Therefore
$$
h(d)(\alpha)=s_\alpha\big(d(\alpha),d(0)\big)\neq s_\alpha\big(e(\alpha),e(0)\big)=h(e)(\alpha)
$$
for every $\alpha\in\Omega\setminus\Omega_p$, because $(s_\alpha,t_\alpha)$ is focused on $(\pi_\alpha[D],\pi_0[D])$ and $d(\alpha)=e(\alpha)$ for these values of $\alpha$. But $|\Omega\setminus\Omega_p|=\kappa$ because $|\Omega|=\kappa$ and $p\notin P$, so the proof is concluded in this case. Finally, assume that $p\in P$. In this case, it suffices to observe that
$$
h(d)(\alpha)=d(\alpha)\neq e(\alpha)=h(e)(\alpha)
$$
for every $\alpha\in\Omega_p\setminus\Omega$, since this set has size $\kappa$ by our choice of $\Omega$.
\end{proof}

An early version of the following result only covered the case in which $\Omega^\ast=\varnothing$; the more general version given below will be needed in the proof of Theorem \ref{theorem_gpp_manifolds}.

\begin{lemma}\label{lemma_gpp}
Let $\kappa$ be a cardinal, let $X_\alpha$ for $\alpha\in\kappa$ be spaces, and let $\Omega^\ast\subseteq\kappa$ be either empty or infinite. Set $X=\prod_{\alpha\in\kappa}X_\alpha$. Assume that the following conditions hold:
\begin{itemize}
\item $X$ has the $\WGPP$,
\item $X_\Omega$ has the $\GPP$ for every $\Omega\in [\kappa]^\omega$ such that $\Omega\cap\Omega^\ast$ is empty or infinite.
\end{itemize}
Then $X$ has the $\GPP$.
\end{lemma}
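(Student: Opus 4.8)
The plan is to reduce a given countable set to weak general position using the first hypothesis, and then to ``repair'' the remaining coordinatewise agreements on a partition of $\kappa$ into countably infinite blocks, applying the $\GPP$ of the corresponding subproducts block by block. The blocks will be produced by a purely combinatorial lemma, which I expect to be the crux of the argument. First I would dispense with the small cases. If $\kappa$ is finite then general position and weak general position coincide, so the first hypothesis already gives the conclusion. If $\kappa=\omega$, then $\Omega=\kappa$ itself lies in $[\kappa]^\omega$ and satisfies $\Omega\cap\Omega^\ast=\Omega^\ast$, which is empty or infinite; hence the second hypothesis, applied with $\Omega=\kappa$, literally asserts that $X=X_\kappa$ has the $\GPP$. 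So from now on assume that $\kappa$ is uncountable.

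Fix a countable $D\subseteq X$. Using the first hypothesis, fix $g\in\HH(X)$ such that $D'=g[D]$ is in weak general position, and for each $p=\{d,e\}\in[D']^2$ set $\Omega_p=\{\alpha\in\kappa:d(\alpha)\neq e(\alpha)\}$. Weak general position says precisely that $|\Omega_p|=\kappa$, and there are only countably many such $p$. The combinatorial heart of the proof is the claim that there is a partition $\kappa=\bigsqcup_{i\in\kappa}\Omega_i$ into countably infinite sets such that \emph{(a)} $\Omega_i\cap\Omega_p\neq\varnothing$ for all $i$ and all $p\in[D']^2$, and \emph{(b)} $\Omega_i\cap\Omega^\ast$ is empty or infinite for every $i$.

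Granting the claim, each $\tau_{\Omega_i}[D']$ is a countable subset of $X_{\Omega_i}$, and since $\Omega_i\in[\kappa]^\omega$ with $\Omega_i\cap\Omega^\ast$ empty or infinite, the second hypothesis supplies $h_i\in\HH(X_{\Omega_i})$ with $h_i[\tau_{\Omega_i}[D']]$ in general position in $X_{\Omega_i}$. Identifying $X$ with $\prod_{i\in\kappa}X_{\Omega_i}$ along the partition, I would set $h=\prod_i h_i\in\HH(X)$. For distinct $d,e\in D'$ and any $i$, condition \emph{(a)} furnishes a coordinate in $\Omega_i$ on which $d$ and $e$ differ, so $\tau_{\Omega_i}(d)\neq\tau_{\Omega_i}(e)$; as $h_i$ is injective and brings $\tau_{\Omega_i}[D']$ into general position, $h(d)$ and $h(e)$ differ on \emph{all} of $\Omega_i$. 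Since the blocks partition $\kappa$, $h(d)$ and $h(e)$ differ on all of $\kappa$, so $h[D']$ is in general position and $h\circ g$ witnesses the $\GPP$ for $D$.

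It remains to prove the partition claim, which is where I expect the real work to lie. The point of having passed to weak general position is exactly that every $\Omega_p$ has full size $\kappa$: this is what allows $\kappa$-many countable blocks to each meet all of the countably many sets $\Omega_p$. I would construct the blocks by a recursion of length $\kappa$, fixing an enumeration $\kappa=\{\gamma_\xi:\xi<\kappa\}$ and forcing $\gamma_\xi$ into the block built at stage $\xi$ (unless already used) to guarantee coverage; since fewer than $\kappa$ coordinates have been used before any stage, each $\Omega_p$ still has unused elements to contribute, which secures \emph{(a)}. The delicate point is \emph{(b)}, and here I would split on the size of $\Omega^\ast$. If $|\Omega^\ast|=\kappa$, give every block infinitely many fresh elements of $\Omega^\ast$ (possible since $\Omega^\ast$ always retains $\kappa$-many unused elements), so that every block meets $\Omega^\ast$ infinitely and is in particular infinite. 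If $|\Omega^\ast|<\kappa$ (in particular if $\Omega^\ast=\varnothing$), then each $\Omega_p\setminus\Omega^\ast$ still has size $\kappa$, so I would take most blocks disjoint from $\Omega^\ast$, padding them with fresh elements of the $\kappa$-sized set $\kappa\setminus\Omega^\ast$ to make them countably infinite, while pre-partitioning the infinite set $\Omega^\ast$ into countably infinite pieces and handing one piece to each of the remaining (at most $\kappa$-many) blocks. Interleaving these stages with the covering device yields the partition; the only genuine subtlety is recognizing that this dichotomy on $|\Omega^\ast|$ is exactly what reconciles condition \emph{(b)} with the demand for $\kappa$-many blocks, and the remaining bookkeeping is routine once these cardinality inequalities are in place.
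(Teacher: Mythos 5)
Your proposal is correct and follows essentially the same route as the paper: reduce to weak general position via the $\WGPP$, partition $\kappa$ into countably infinite blocks on each of which the projection of $D$ is injective (your condition \emph{(a)} is exactly this) and whose intersection with $\Omega^\ast$ is empty or infinite, apply the $\GPP$ blockwise, and take the product homeomorphism. The only cosmetic difference is in the bookkeeping for condition \emph{(b)}: the paper avoids your dichotomy on $|\Omega^\ast|$ by pre-partitioning $\Omega^\ast$ into countably infinite pieces $\Omega^\ast_\beta$ and demanding that every block containing a point of some $\Omega^\ast_\beta$ contain all of it, which handles both of your cases uniformly.
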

\begin{proof}
The lemma is trivial if $\kappa\leq\omega$, so assume that $\kappa$ is uncountable. Set $\lambda=|\Omega^\ast|$, and fix $\Omega^\ast_\alpha\in [\Omega^\ast]^\omega$ for $\alpha\in\lambda$ such that $\Omega^\ast_\alpha\cap\Omega^\ast_\beta=\varnothing$ whenever $\alpha\neq\beta$ and $\bigcup_{\alpha\in\lambda}\Omega^\ast_\alpha=\Omega^\ast$. Let $D$ be a countable subset of $X$. Since $X$ has the $\WGPP$, we can assume without loss of generality that $D$ is in weak general position. Using this assumption together with the uncountability of $\kappa$, one can easily find $\Omega_\alpha\in [\kappa]^\omega$ for $\alpha\in\kappa$ such that the following conditions are satisfied:
\begin{enumerate}
\item $\Omega_\alpha\cap\Omega_\beta=\varnothing$ whenever $\alpha\neq\beta$,
\item $\alpha\in\bigcup_{\beta\leq\alpha}\Omega_\beta$ for every $\alpha\in\kappa$,
\item Each $\tau_{\Omega_\alpha}\upharpoonright D$ is injective,
\item If $\Omega_\alpha\cap\Omega^\ast_\beta\neq\varnothing$ for some $\alpha\in\kappa$ and $\beta\in\lambda$ then $\Omega^\ast_\beta\subseteq\Omega_\alpha$.
\end{enumerate}

Notice that each $\Omega_\alpha\cap\Omega^\ast_\beta$ is either empty or infinite by condition $(4)$. Therefore, each $X_{\Omega_\alpha}$ has the $\GPP$. Pick $h_\alpha\in\HH(X_{\Omega_\alpha})$ for $\alpha\in\kappa$ such that each $h_\alpha\big[\tau_{\Omega_\alpha}[D]\big]$ is in general position (in $X_{\Omega_\alpha}$), then set $h=\prod_{\alpha\in\kappa}h_\alpha$. Since condition $(1)$ holds and $\bigcup_{\alpha\in\kappa}\Omega_\alpha=\kappa$ by condition $(2)$, one sees that $h\in\HH(X)$. Finally, using condition~$(3)$, it is straightforward to verify that $h[D]$ is in general position.
\end{proof}

\section{General position: topological groups}\label{section_groups}

The purpose of this section is to obtain Theorem \ref{theorem_gpp_groups}, which will allow us to apply Theorem \ref{theorem_main} to certain topological groups (see the proof of Corollary \ref{corollary_main_zero_dimensional}). This section can also be viewed as a warm-up to Section \ref{section_manifolds}, where our strategy will be roughly the same, although saddled by a myriad of technical complications.

\begin{lemma}\label{lemma_convenient_pair_groups}
Let $X$ be a topological group, and let $Y$ be a subspace of $X$. Then there exists a convenient pair for $(X,Y)$ focused on $(X,Y)$.	
\end{lemma}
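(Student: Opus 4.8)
The plan is to use the group structure directly: writing the group operation multiplicatively, I would define
$$
s(x,y)=xy\quad\text{and}\quad t(x,y)=xy^{-1}
$$
for $(x,y)\in X\times Y$. I expect this choice to satisfy all the required conditions with essentially no difficulty, since everything reduces to the group axioms and the continuity of the group operations.

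First I would verify that $s,t\in\CC(X\times Y,X)$. Since $X$ is a topological group, both the multiplication map $X\times X\longrightarrow X$ and the inversion map $X\longrightarrow X$ are continuous. As $Y$ is a subspace of $X$, the inclusion $X\times Y\hookrightarrow X\times X$ is continuous, so $s$ is continuous as a composition. Similarly, $t(x,y)=x\cdot y^{-1}$ is continuous, since it factors through inversion in the second coordinate followed by multiplication. It is worth emphasizing that we do \emph{not} need $y^{-1}\in Y$; we only require $s$ and $t$ to take values in $X$, which they plainly do.

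Next I would check the two cancellation identities defining a convenient pair. For every $(x,y)\in X\times Y$, associativity and the group axioms give
$$
s\big(t(x,y),y\big)=(xy^{-1})y=x\quad\text{and}\quad t\big(s(x,y),y\big)=(xy)y^{-1}=x,
$$
so $(s,t)$ is a convenient pair for $(X,Y)$. Finally, to see that it is focused on $(X,Y)$, fix $x\in X$ and $y,y'\in Y$ with $y\neq y'$; then $s(x,y)=xy\neq xy'=s(x,y')$, because left translation by $x$ is a bijection of $X$ and hence injective. The only ``obstacle'' worth flagging is purely notational, namely keeping careful track of which factor lives in $X$ and which in $Y$; the mathematical content is an immediate consequence of the group axioms, which is exactly why nontrivial convenient pairs are, as the paper notes, readily available in the group setting.
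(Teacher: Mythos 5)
Your proposal is correct and coincides with the paper's own proof: the paper defines exactly the same pair $s(x,y)=x\cdot y$, $t(x,y)=x\cdot y^{-1}$ and leaves the (routine) verification to the reader, which you have simply carried out in full. Nothing is missing.
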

\begin{proof}
Simply define $s,t:X\times Y\longrightarrow X$ by setting
\begin{itemize}
\item $s(x,y)=x\cdot y$,
\item $t(x,y)=x\cdot y^{-1}$.
\end{itemize}
It is trivial to check that $(s,t)$ is as desired.
\end{proof}

\begin{theorem}\label{theorem_gpp_groups}
Let $\kappa$ be an infinite cardinal, let $X_0$ be a space, and let $X_\alpha$ for $\alpha\in\kappa\setminus\{0\}$ be topological groups. Set $X=\prod_{\alpha\in\kappa}X_\alpha$. Assume that the following conditions hold:
\begin{itemize}
\item $X_0$ is a subspace of $X_\alpha$ for every $\alpha\in\kappa\setminus\{0\}$,
\item $X_\Omega$ has the $\GPP$ for every $\Omega\in [\kappa]^\omega$.
\end{itemize}
Then $X$ has the $\GPP$.
\end{theorem}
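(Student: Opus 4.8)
The plan is to deduce the $\GPP$ from the $\WGPP$ using Lemma \ref{lemma_gpp} with $\Omega^\ast=\varnothing$. With this choice the condition ``$\Omega\cap\Omega^\ast$ is empty or infinite'' holds trivially (as the empty case) for every $\Omega\in[\kappa]^\omega$, so the second hypothesis of Lemma \ref{lemma_gpp} reduces precisely to the second hypothesis of our theorem. Hence it suffices to show that $X$ has the $\WGPP$. Notice also that the convenient pairs needed for Lemma \ref{lemma_wgpp} come for free: since each $X_\alpha$ with $\alpha\in\kappa\setminus\{0\}$ is a topological group containing $X_0$ as a subspace, Lemma \ref{lemma_convenient_pair_groups} furnishes a convenient pair for $(X_\alpha,X_0)$ focused on $(X_\alpha,X_0)$, which is a fortiori focused on $(\pi_\alpha[E],\pi_0[E])$ for any $E\subseteq X$, because focusing is inherited by subsets.

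So fix a countable $D\subseteq X$; I want $h\in\HH(X)$ with $h[D]$ in weak general position. By the previous paragraph, the only hypothesis of Lemma \ref{lemma_wgpp} that is not automatic is the injectivity of $\pi_0\re D$. My plan is therefore first to find $g\in\HH(X)$ for which $\pi_0\re g[D]$ is injective, and then to apply Lemma \ref{lemma_wgpp} to $g[D]$ (whose $0$-th coordinate projection is now injective, and for which convenient pairs still exist since $\pi_\alpha[g[D]]\subseteq X_\alpha$ and $\pi_0[g[D]]\subseteq X_0$) to obtain $h\in\HH(X)$ with $h[g[D]]$ in weak general position. Then $h\circ g$ is as required, and the $\WGPP$ follows.

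The construction of $g$ is the heart of the matter, and I expect it to be the main point of the proof. Since $D$ is countable, one can choose $\Omega_0\in[\kappa]^\omega$ with $0\in\Omega_0$ such that $\tau_{\Omega_0}\re D$ is injective: for each $\{d,e\}\in[D]^2$ pick a coordinate on which $d$ and $e$ differ, and let $\Omega_0$ consist of $0$ together with these countably many coordinates. By the second hypothesis of the theorem, $X_{\Omega_0}$ has the $\GPP$, so there exists $k\in\HH(X_{\Omega_0})$ with $k\big[\tau_{\Omega_0}[D]\big]$ in general position in $X_{\Omega_0}$. Under the identification $X\cong X_{\Omega_0}\times X_{\kappa\setminus\Omega_0}$, set $g=k\times\mathrm{id}$, which is a homeomorphism of $X$. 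For distinct $d,e\in D$ the points $\tau_{\Omega_0}(d)$ and $\tau_{\Omega_0}(e)$ are distinct by the choice of $\Omega_0$, so their $k$-images are distinct elements of a set in general position and hence differ in every coordinate of $\Omega_0$, in particular at coordinate $0$. Since $\pi_0(g(d))$ equals the $0$-th coordinate of $k(\tau_{\Omega_0}(d))$, this shows exactly that $\pi_0\re g[D]$ is injective, completing the construction and therefore the proof.
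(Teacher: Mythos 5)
Your proposal is correct and follows essentially the same route as the paper's proof: reduce to the $\WGPP$ via Lemma \ref{lemma_gpp} with $\Omega^\ast=\varnothing$, obtain the convenient pairs from Lemma \ref{lemma_convenient_pair_groups}, and secure the injectivity of $\pi_0$ on the countable set by applying the $\GPP$ of a countably infinite subproduct containing coordinate $0$ before invoking Lemma \ref{lemma_wgpp}. The only difference is that you spell out the ``without loss of generality'' step (the homeomorphism $k\times\mathrm{id}$ and the fact that points in general position differ at every coordinate), which the paper leaves implicit.
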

\begin{proof}
By applying Lemma \ref{lemma_gpp} with $\Omega^\ast=\varnothing$, it will be enough to show that $X$ has the $\WGPP$. So pick a countable $D\subseteq X$. Fix $\Omega\in [\kappa]^\omega$ such that $0\in\Omega$ and $\tau_\Omega\re D$ is injective. Since $X_\Omega$ has the $\GPP$, we can actually assume that $\pi_0\re D$ is injective. Furthermore, by Lemma \ref{lemma_convenient_pair_groups}, for every $\alpha\in\kappa\setminus\{0\}$ we can fix a convenient pair for $(X_\alpha,X_0)$ focused on $(X_\alpha,X_0)$ (hence on $(\pi_\alpha[D],\pi_0[D])$). At this point, it remains to apply Lemma \ref{lemma_wgpp}.
\end{proof}

\section{General position: manifolds with boundary}\label{section_manifolds}

The purpose of this section is to show that, under rather mild assumptions, a product of manifolds with boundary has the $\GPP$ (see Theorem \ref{theorem_gpp_manifolds}). The strategy of the proof is to apply Lemmas \ref{lemma_wgpp} and \ref{lemma_gpp}, where the $\GPP$ of countably infinite subproducts will be given by Theorem \ref{theorem_yang}. To this end, we will construct convenient pairs for discs (equivalently, for the closures of suitable open subspaces of the given manifolds), then glue them together. To ensure that the gluing process will yield a continuous function, we will need to be able to choose these convenient pairs so that they are arbitrarily close to the projection on the first coordinate. For this reason, we will construct a uniformly convergent sequence of convenient pairs instead of a single one. Throughout this section, we will assume that $1\leq n<m<\omega$.

We begin with several auxiliary definitions. Set
$$
\SSS^n_\ast=\{(x/2)-(1/2,0,\ldots,0):x\in\SSS^n\},
$$
and observe that $(0,\ldots,0)\in\SSS^n_\ast$ and that $||v||\leq 1$ for every $v\in\SSS^n_\ast$. By ``wrapping the disc around the sphere,'' it is possible to obtain a continuous $\psi:\DDD^n\longrightarrow\SSS^n_\ast$ such that $\psi\re\BBB^n$ is injective and $\psi\re\SSS^{n-1}$ is constant with value $(0,\ldots,0)$ (see \cite[Example 7.1.7]{singh} for an explicit expression). Since $n<m$, we can let $i:\RRR^{n+1}\longrightarrow\RRR^m$ be the embedding that appends $m-(n+1)$ zeros to each vector of $\RRR^{n+1}$. In conclusion, by setting $\varphi=i\circ\psi$, we obtain a continuous $\varphi:\DDD^n\longrightarrow\RRR^m$ with the following properties:
\begin{itemize}
\item $\varphi\re\BBB^n$ is injective,
\item $\varphi\re\SSS^{n-1}$ is constant with value $(0,\ldots,0)$,
\item $||\varphi(y)||\leq 1$ for every $y\in\DDD^n$.
\end{itemize}

Given $k\in\omega$, define $f_k,g_k:\RRR^m\times\DDD^n\longrightarrow\RRR^m$ by setting
\begin{itemize}
\item $f_k(x,y)=x+2^{-k}\cdot\varphi(y)$,
\item $g_k(x,y)=x-2^{-k}\cdot\varphi(y)$.
\end{itemize}
Set $h(x)=x/(1-||x||)$ for $x\in\BBB^m$, and observe that $h:\BBB^m\longrightarrow\RRR^m$ is a homeomorphism whose inverse is described by $h^{-1}(x)=x/(1+||x||)$ for $x\in\RRR^m$.

Given $k\in\omega$, define $s_k,t_k:\DDD^m\times\DDD^n\longrightarrow\DDD^m$ by setting
$$
s_k(x,y)=
\left\{
\begin{array}{ll}
h^{-1}\Big(f_k\big(h(x),y\big)\Big) & \textrm{if }(x,y)\in\BBB^m\times\DDD^n,\\
x & \textrm{if }(x,y)\in\SSS^{m-1}\times\DDD^n,
\end{array}
\right.
$$
$$
t_k(x,y)=
\left\{
\begin{array}{ll}
h^{-1}\Big(g_k\big(h(x),y\big)\Big) & \textrm{if }(x,y)\in\BBB^m\times\DDD^n,\\
x & \textrm{if }(x,y)\in\SSS^{m-1}\times\DDD^n.
\end{array}
\right.
$$

We begin by showing that the functions $s_k$ and $t_k$ are continuous. In fact, we will need to prove something stronger (see Lemma \ref{lemma_equicontinuous}). For this purpose, the following two calculus exercises will be useful.

\begin{lemma}\label{lemma_convergence_easy}
Let $x_0\in\SSS^{m-1}$, and let $\varepsilon>0$. Then there exist $R>0$ and $\delta>0$ such that the following holds for every $z\in\RRR^m$:
$$
||h^{-1}(z)-x_0||<\varepsilon\text{ whenever }||z||>R\text{ and }\left|\left|\frac{z}{||z||}-x_0\right|\right|<\delta.
$$
\end{lemma}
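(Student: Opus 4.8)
The plan is to use the explicit formula $h^{-1}(z)=z/(1+||z||)$ and reduce everything to a single triangle-inequality estimate. The geometric picture is that $h^{-1}$ radially compresses $\RRR^m$ into $\BBB^m$: writing $h^{-1}(z)=\frac{||z||}{1+||z||}\cdot\frac{z}{||z||}$, one sees that as $||z||\to\infty$ the image is pushed out toward the unit sphere along the direction $z/||z||$. Hence, if that direction is close to $x_0$, the point $h^{-1}(z)$ ought to be close to $x_0$ itself, and the two hypotheses in the statement are designed precisely to control the direction and the radial displacement separately.

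Concretely, I would set $r=||z||$ and $u=z/||z||$, so that $z=ru$ with $||u||=1$ and $h^{-1}(z)=\frac{r}{1+r}u$. The key step is the algebraic decomposition
$$
h^{-1}(z)-x_0=\frac{r}{1+r}(u-x_0)-\frac{1}{1+r}x_0,
$$
obtained by adding and subtracting $\frac{r}{1+r}x_0$. Taking norms, using $\frac{r}{1+r}<1$ together with $||x_0||=1$ (since $x_0\in\SSS^{m-1}$), gives the clean bound
$$
||h^{-1}(z)-x_0||\leq||u-x_0||+\frac{1}{1+r}.
$$

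From here the choice of constants is immediate: set $\delta=\varepsilon/2$ and pick $R$ large enough that $\frac{1}{1+R}<\varepsilon/2$, for instance $R=2/\varepsilon$. Then whenever $||z||>R$ and $||u-x_0||<\delta$ — that is, whenever $z$ satisfies the two hypotheses — the first term in the displayed bound is below $\varepsilon/2$ by the direction hypothesis and the second is below $\frac{1}{1+R}<\varepsilon/2$ by the size hypothesis, so their sum is below $\varepsilon$, as required.

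There is essentially no genuine obstacle here — as the authors note, this is a calculus exercise. The only point requiring a moment's care is arranging the decomposition so that the factor $\frac{r}{1+r}$, which is bounded by $1$ uniformly in $r$, multiplies the controllable quantity $||u-x_0||$, while the leftover term $\frac{1}{1+r}$ is dispatched purely by the largeness of $||z||$. With that split in hand, no further subtlety arises.
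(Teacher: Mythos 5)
Your proof is correct and follows essentially the same route as the paper: the paper's entire proof is the observation that $h^{-1}(z)=\frac{z}{||z||}\cdot\frac{1}{\frac{1}{||z||}+1}$, i.e.\ exactly your factorization into the direction $u=z/||z||$ and the radial factor $\frac{r}{1+r}$, with the remaining $\varepsilon$--$\delta$ bookkeeping left to the reader. You have simply carried out that bookkeeping explicitly, and your decomposition and choice of constants are valid.
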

\begin{proof}
Simply observe that
$$
h^{-1}(z)=\frac{z}{||z||}\cdot\frac{1}{\frac{1}{||z||}+1}
$$
for every $z\in\RRR^m\setminus\{(0,\ldots,0)\}$.
\end{proof}

\begin{lemma}\label{lemma_convergence_hard}
Let $x_0\in\SSS^{m-1}$, and let $\varepsilon>0$. Then there exist $R>0$ and $\delta>0$ such that the following holds for every $u,v\in\RRR^m$:
$$
\left|\left|\frac{u+v}{||u+v||}-x_0\right|\right|<\varepsilon\text{ whenever }||u||>R\text{, }||v||\leq 1\text{ and }\left|\left|\frac{u}{||u||}-x_0\right|\right|<\delta.
$$
\end{lemma}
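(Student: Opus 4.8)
The plan is to exploit that normalizing $u+v$ should differ only slightly from normalizing $u$, because $v$ stays bounded by $1$ while $u$ is forced to be large, so the perturbation $v$ becomes negligible after normalization. Concretely, I would first split the quantity to be estimated using the triangle inequality,
$$
\left|\left|\frac{u+v}{||u+v||}-x_0\right|\right|\leq\left|\left|\frac{u+v}{||u+v||}-\frac{u}{||u||}\right|\right|+\left|\left|\frac{u}{||u||}-x_0\right|\right|,
$$
and simply declare $\delta=\varepsilon/2$, so that the second summand is automatically $<\varepsilon/2$ by the hypothesis $\left|\left|\frac{u}{||u||}-x_0\right|\right|<\delta$. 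It then remains to choose $R$ so large that the first summand is $<\varepsilon/2$ as well.

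The main computational ingredient is a Lipschitz-type estimate for the normalization map: for nonzero $a,b\in\RRR^m$,
$$
\left|\left|\frac{a}{||a||}-\frac{b}{||b||}\right|\right|\leq\frac{2\,||a-b||}{||a||}.
$$
This follows by writing the difference as $\frac{a-b}{||a||}+b\big(\frac{1}{||a||}-\frac{1}{||b||}\big)$, bounding the first term by $\frac{||a-b||}{||a||}$, and bounding the second term by $\frac{\big|\,||a||-||b||\,\big|}{||a||}\leq\frac{||a-b||}{||a||}$ via the reverse triangle inequality. Applying this with $a=u$ and $b=u+v$, and using $||a-b||=||v||\leq 1$ together with $||a||=||u||>R$, gives a bound of $2/R$ for the first summand. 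Before doing so, however, I must ensure that $u+v\neq(0,\ldots,0)$, which is guaranteed by $||u+v||\geq ||u||-||v||\geq R-1>0$ as long as $R>1$.

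Putting this together, I would take $R>\maxi\{1,4/\varepsilon\}$, so that $R>1$ makes the normalization of $u+v$ well-defined and $2/R<\varepsilon/2$ controls the first summand; combined with $\delta=\varepsilon/2$, the displayed triangle inequality then yields the desired bound $\varepsilon$. The whole argument is routine once the normalization estimate is in hand. The only points that require a little care, rather than being genuine obstacles, are remembering to enforce $R>1$ so that $u+v$ is not the origin, and orienting the key estimate so that $||u||$ (and hence $R$) appears in the denominator, giving a bound directly in terms of $R$.
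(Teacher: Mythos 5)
Your proof is correct, and at heart it is the same perturbation argument as the paper's: both show that the bounded summand $v$ is washed out by normalization once $||u||$ is large, and both finish with the triangle inequality against $\left|\left|\frac{u}{||u||}-x_0\right|\right|$. The organization differs, though, in a way worth noting. The paper uses a three-term split
$$
\left|\left|\frac{u+v}{||u+v||}-x_0\right|\right|\leq\frac{||v||}{||u+v||}+\left|\left|\frac{u}{||u+v||}-\frac{u}{||u||}\right|\right|+\left|\left|\frac{u}{||u||}-x_0\right|\right|,
$$
taking each term below $\varepsilon/3$; since the first two terms are normalized by $||u+v||$ rather than $||u||$, the paper needs an auxiliary sandwich estimate $\frac{||u||}{||u||+1}\leq\frac{||u||}{||u+v||}\leq\frac{||u||}{||u||-1}$ and a two-stage choice $R\geq R'$ to convert largeness of $||u||$ into control of those terms. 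You instead isolate a self-contained Lipschitz-type bound for the normalization map, $\left|\left|\frac{a}{||a||}-\frac{b}{||b||}\right|\right|\leq\frac{2||a-b||}{||a||}$, proved by the decomposition $\frac{a-b}{||a||}+b\bigl(\frac{1}{||a||}-\frac{1}{||b||}\bigr)$ and the reverse triangle inequality; applying it with $a=u$, $b=u+v$ puts $||u||$ in the denominator, so the bound $2/R$ falls out immediately and you get explicit constants $\delta=\varepsilon/2$, $R>\maxi\{1,4/\varepsilon\}$ in one step. Your version is a bit cleaner and more reusable (the normalization estimate is a general fact, independent of $x_0$), while the paper's is more ad hoc but requires no separately stated lemma; your attention to $R>1$ guaranteeing $u+v\neq(0,\ldots,0)$ is exactly the right point of care, and the paper handles the same issue implicitly through its choice $R'\geq 1$.
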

\begin{proof}
Start by choosing $R'\geq 1$ such that
$$
\frac{||v||}{||u+v||}\leq\frac{1}{||u||-1}<\frac{\varepsilon}{3}
$$
whenever $||u||>R'$ and $||v||\leq 1$. Notice that
$$
\frac{1}{1+\frac{1}{||u||}}=\frac{||u||}{||u||+1}\leq\frac{||u||}{||u+v||}\leq\frac{||u||}{||u||-1}=\frac{1}{1-\frac{1}{||u||}}
$$
whenever $||u||>1$ and $||v||\leq 1$. Therefore, we can find $R\geq R'$ such that
$$
\left|\left|\frac{u}{||u+v||}-\frac{u}{||u||}\right|\right|=\left|\frac{||u||}{||u+v||}-1\right|<\frac{\varepsilon}{3}
$$
whenever $||u||>R$ and $||v||\leq 1$. Finally, set $\delta=\varepsilon/3$. Since
$$
\left|\left|\frac{u+v}{||u+v||}-x_0\right|\right|\leq\frac{||v||}{||u+v||}+\left|\left|\frac{u}{||u+v||}-\frac{u}{||u||}\right|\right|+\left|\left|\frac{u}{||u||}-x_0\right|\right|,
$$
it is clear that $R$ and $\delta$ are as desired.
\end{proof}

\begin{lemma}\label{lemma_equicontinuous}
Set $\Ss=\{s_k:k\in\omega\}$ and $\TT=\{t_k:k\in\omega\}$. Then $\Ss$ and $\TT$ are equicontinuous.
\end{lemma}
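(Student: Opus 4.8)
The plan is to prove that $\Ss$ and $\TT$ are equicontinuous at every point $(x_0,y_0)\in\DDD^m\times\DDD^n$, viewing $\DDD^m$ as a subspace of $\RRR^m$ with the Euclidean metric. Since $s_k$ and $t_k$ differ only by the sign of the perturbation $2^{-k}\varphi(y)$, and since $||{\pm}2^{-k}\varphi(y)||=2^{-k}||\varphi(y)||\leq 2^{-k}\leq 1$ in both cases, the two families are handled by identical arguments; I would carry out the estimate for $\Ss$. The proof then splits according to whether $x_0\in\BBB^m$ or $x_0\in\SSS^{m-1}$. The feature that makes everything work is that the perturbation has norm at most $1$ \emph{uniformly in $k$ and $y$}, which is exactly what will force the chosen neighborhood to be independent of $k$.

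\emph{Interior case} ($x_0\in\BBB^m$). Here a whole neighborhood of $x_0$ lies in the open ball, so $s_k(x,y)=h^{-1}\big(h(x)+2^{-k}\varphi(y)\big)$ for all relevant $(x,y)$. Given $\varepsilon>0$, I would confine the arguments of $h^{-1}$ to a fixed closed ball $B\subseteq\RRR^m$ centered at $h(x_0)$ (say of radius $2$): once $||h(x)-h(x_0)||<1/2$, the bound $||2^{-k}\varphi(y)||\leq 1$ forces both $h(x)+2^{-k}\varphi(y)$ and $h(x_0)+2^{-k}\varphi(y_0)$ into $B$ for every $k$. Since $h^{-1}\re B$ is uniformly continuous (being continuous on a compact set), it has a modulus $\eta=\eta(\varepsilon)$; and because
$$
\left|\left|\big(h(x)+2^{-k}\varphi(y)\big)-\big(h(x_0)+2^{-k}\varphi(y_0)\big)\right|\right|\leq ||h(x)-h(x_0)||+||\varphi(y)-\varphi(y_0)||
$$
(again using $2^{-k}\leq 1$) is bounded independently of $k$, the continuity of $h$ at $x_0$ and of $\varphi$ at $y_0$ let me choose a neighborhood of $(x_0,y_0)$ on which this quantity stays below $\eta$, giving $||s_k(x,y)-s_k(x_0,y_0)||<\varepsilon$ for all $k$ simultaneously.

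\emph{Boundary case} ($x_0\in\SSS^{m-1}$). This is the crux, and where Lemmas \ref{lemma_convergence_easy} and \ref{lemma_convergence_hard} enter. Now $s_k(x_0,y_0)=x_0$, and a nearby $(x,y)$ may have either $x\in\SSS^{m-1}$ (where $s_k(x,y)=x$ is automatically close to $x_0$) or $x\in\BBB^m$ (the delicate subcase). For $x\in\BBB^m$ I would write $s_k(x,y)=h^{-1}(u+v)$ with $u=h(x)$ and $v=2^{-k}\varphi(y)$, so that $||v||\leq 1$. The key elementary facts are that $||h(x)||=||x||/(1-||x||)\to\infty$ and $h(x)/||h(x)||=x/||x||\to x_0$ as $x\to x_0$; both depend on $x$ alone, not on $y$ or $k$. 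Given $\varepsilon$, I apply Lemma \ref{lemma_convergence_easy} to obtain $R_1,\delta_1$, then apply Lemma \ref{lemma_convergence_hard} with $\delta_1$ in place of its $\varepsilon$ to obtain $R_2,\delta_2$; taking $R=\maxi\{R_1+1,R_2\}$ and $\delta=\delta_2$ and using $||u+v||\geq ||u||-1$ chains the two lemmas to yield $||h^{-1}(u+v)-x_0||<\varepsilon$ whenever $||u||>R$, $||v||\leq 1$ and $||u/||u||-x_0||<\delta$. Since the hypotheses on $u$ translate into conditions on $x$ alone, there is a single radius $\rho>0$ (also taken $\leq\varepsilon$ to cover the $x\in\SSS^{m-1}$ subcase) such that the neighborhood $\{x\in\DDD^m:||x-x_0||<\rho\}\times\DDD^n$ of $(x_0,y_0)$ works for every $k$ at once.

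The main obstacle is precisely this boundary case: it is the only place where the blow-up of $h$ near $\SSS^{m-1}$ must be tamed, and the entire construction of $\varphi$, $h$ and the perturbations $2^{-k}\varphi(y)$ is arranged so that it can be. The bound $||\varphi(y)||\leq 1$ (hence $||2^{-k}\varphi(y)||\leq 1$ for all $k$) is exactly what lets Lemma \ref{lemma_convergence_hard} absorb the perturbation uniformly and produce a $k$-independent neighborhood. Once both cases are established, equicontinuity at the arbitrary point $(x_0,y_0)$ follows, and hence so does the equicontinuity of $\Ss$ (and of $\TT$ by the identical argument).
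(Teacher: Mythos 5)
Your proof is correct, and its crux --- the boundary case $x_0\in\SSS^{m-1}$ --- is exactly the paper's argument: chain Lemma \ref{lemma_convergence_hard} (applied with the $\delta$ of Lemma \ref{lemma_convergence_easy} in the role of $\varepsilon$) into Lemma \ref{lemma_convergence_easy}, using $||u+v||\geq ||u||-1$ to guarantee the norm hypothesis, and then translate the hypotheses on $u=h(x)$ into a $k$-independent neighborhood of $x_0$ via $||h(x)||\to\infty$ and $h(x)/||h(x)||=x/||x||\to x_0$; your explicit requirement $\rho\leq\varepsilon$ to handle points of $U\cap\SSS^{m-1}$ is a small point the paper leaves implicit. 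Where you genuinely diverge is the interior case $x_0\in\BBB^m$. The paper observes that $(f_k:k\in\omega)$ converges uniformly to the projection, transfers this to uniform convergence of $(s_k:k\in\omega)$ on $K\times\DDD^n$ for compact $K\subseteq\BBB^m$ via the metric-independence of the compact-open topology (Theorem \ref{theorem_compact_open}), and then quotes Proposition \ref{proposition_equicontinuous} (uniform convergence implies equicontinuity). You instead give a bare-hands estimate: confine all arguments of $h^{-1}$ to a fixed compact ball around $h(x_0)$ (possible precisely because $||2^{-k}\varphi(y)||\leq 1$ uniformly in $k$ and $y$), invoke uniform continuity of $h^{-1}$ there, and bound the $k$-dependent displacement by a $k$-free quantity. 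Your route is more elementary and self-contained, needing neither Theorem \ref{theorem_compact_open} nor Proposition \ref{proposition_equicontinuous}; the paper's route is shorter given that this machinery is already set up in Section \ref{section_preliminaries} and is reused elsewhere (e.g.\ in Lemma \ref{lemma_global_convenient_pair} and Theorem \ref{theorem_gpp_compact}).
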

\begin{proof}
We will only show that $\Ss$ is equicontinuous, since the proof that $\TT$ is equicontinuous is almost identical. So pick $(x_0,y_0)\in\DDD^m\times\DDD^n$. First assume that $x_0\in\BBB^m$. It is straightforward to check that the sequence $(f_k:k\in\omega)$ converges uniformly to the projection on the first coordinate $\pi:\RRR^m\times\DDD^n\longrightarrow\RRR^m$. In particular, this sequence converges uniformly on $K\times\DDD^n$ for every compact $K\subseteq\RRR^m$. Therefore, by Theorem \ref{theorem_compact_open}, the sequence $(s_k:k\in\omega)$ converges uniformly on $K\times\DDD^n$ for every compact $K\subseteq\BBB^m$. By letting $K$ be the closure of a sufficiently small neighborhood of $x_0$ and applying Proposition \ref{proposition_equicontinuous}, one sees that $\Ss$ is equicontinuous at $(x_0,y_0)$.

To conclude the proof, assume that $x_0\in\SSS^{m-1}$. Fix $\varepsilon>0$. By Lemma \ref{lemma_convergence_easy}, we can fix $R>0$ and $\delta>0$ such that
$$
||h^{-1}(z)-x_0||<\varepsilon\text{ whenever }||z||>R\text{ and }\left|\left|\frac{z}{||z||}-x_0\right|\right|<\delta.
$$
By Lemma \ref{lemma_convergence_hard}, we can fix $R'>0$ and $\delta'>0$ such that
$$
\left|\left|\frac{u+v}{||u+v||}-x_0\right|\right|<\delta\text{ whenever }||u||>R'\text{, }||v||\leq 1\text{ and }\left|\left|\frac{u}{||u||}-x_0\right|\right|<\delta'.
$$
Also assume without loss of generality that $||u+v||>R$ whenever $||u||>R'$ and $||v||\leq 1$. Finally, fix a neighborhood $U$ of $x_0$ in $\DDD^m$ such that $||h(x)||>R'$ for every $x\in U\cap\BBB^m$ and $||(x/||x||)-x_0||<\delta'$ for every $x\in U$. By setting $u=h(x)$, $v=2^{-k}\cdot\varphi(y)$ and $z=u+v$, one can easily check that 
$$
||s_k(x,y)-s_k(x_0,y_0)||=||s_k(x,y)-x_0||<\varepsilon
$$
for every $(x,y)\in U\times\DDD^n$ and $k\in\omega$.
\end{proof}

At this point, it will be easy to obtain the uniform convergence mentioned at the beginning of this section.

\begin{lemma}\label{lemma_equicontinuity}
The sequences $(s_k:k\in\omega)$ and $(t_k:k\in\omega)$ both converge uniformly to the projection on the first coordinate $\pi:\DDD^m\times\DDD^n\longrightarrow\DDD^m$.
\end{lemma}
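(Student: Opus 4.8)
The plan is to invoke the Dini-type theorem stated as Theorem \ref{theorem_dini}, whose hypotheses are almost entirely in hand already. The underlying domain $\DDD^m\times\DDD^n$ is compact and the target $\DDD^m$ is a metric space, so the abstract framework applies. The family $\Ss=\{s_k:k\in\omega\}$ is equicontinuous by Lemma \ref{lemma_equicontinuous}, and the same holds for $\TT$; moreover the limit candidate, the projection $\pi:\DDD^m\times\DDD^n\longrightarrow\DDD^m$, is continuous. Thus the only remaining hypothesis of Theorem \ref{theorem_dini} that needs checking is \emph{pointwise} convergence of $(s_k:k\in\omega)$ and $(t_k:k\in\omega)$ to $\pi$.

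For the pointwise convergence, I would argue coordinate-by-coordinate on the two pieces of the piecewise definition of $s_k$. Fix $(x,y)\in\DDD^m\times\DDD^n$. If $x\in\SSS^{m-1}$, then by definition $s_k(x,y)=x=\pi(x,y)$ for every $k$, so convergence is immediate. If instead $x\in\BBB^m$, then $s_k(x,y)=h^{-1}\big(h(x)+2^{-k}\varphi(y)\big)$; since $2^{-k}\varphi(y)\to (0,\ldots,0)$ and $h^{-1}$ is continuous, this converges to $h^{-1}\big(h(x)\big)=x=\pi(x,y)$. The identical reasoning, with $+2^{-k}\varphi(y)$ replaced by $-2^{-k}\varphi(y)$, handles $(t_k:k\in\omega)$.

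With pointwise convergence to the continuous function $\pi$ established, and equicontinuity already provided by Lemma \ref{lemma_equicontinuous}, an application of Theorem \ref{theorem_dini} on the compact space $\DDD^m\times\DDD^n$ upgrades this to uniform convergence, which is exactly the claim. There is no real obstacle at this stage: the genuinely delicate analytic work, namely the equicontinuity at the boundary points $x_0\in\SSS^{m-1}$ handled via Lemmas \ref{lemma_convergence_easy} and \ref{lemma_convergence_hard}, was already carried out in the preceding lemma. The only point requiring a modicum of care is to correctly recognize this as a Dini situation, that is, to observe that pointwise convergence together with equicontinuity on a compact domain forces uniform convergence.
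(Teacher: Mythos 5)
Your proposal is correct and is exactly the paper's argument: the paper proves this lemma by the single line ``Simply combine Theorem \ref{theorem_dini} and Lemma \ref{lemma_equicontinuous}.'' The only difference is that you also spell out the (routine but genuinely required) pointwise convergence of $s_k$ and $t_k$ to $\pi$ on the two pieces $\BBB^m$ and $\SSS^{m-1}$, which the paper leaves implicit.
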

\begin{proof}
Simply combine Theorem \ref{theorem_dini} and Lemma \ref{lemma_equicontinuous}.
\end{proof}

Next, we will show that each $(s_k,t_k)$ is a convenient pair which behaves nicely on the boundary.

\begin{lemma}\label{lemma_local_convenient_pair}
Let $k\in\omega$. Then $(s_k,t_k)$ is a convenient pair for $(\DDD^m,\DDD^n)$ focused on $(\BBB^m,\BBB^n)$. Furthermore $s_k(x,y)=t_k(x,y)=x$ for every $(x,y)\in\partial(\DDD^m\times\DDD^n)$.
\end{lemma}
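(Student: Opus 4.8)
The plan is to verify the four defining properties directly, reducing the convenient-pair identities to the single algebraic observation that, for each fixed $y$, the self-maps $w\mapsto f_k(w,y)$ and $w\mapsto g_k(w,y)$ of $\RRR^m$ are mutually inverse, being translations by $\pm 2^{-k}\varphi(y)$. Before that, I would record two preliminary facts: each $s_k$ and $t_k$ does take values in $\DDD^m$ (immediate from the definitions, since $h^{-1}$ maps into $\BBB^m$ and points of $\SSS^{m-1}$ are fixed), and each is continuous, which is already contained in Lemma \ref{lemma_equicontinuous}, as an equicontinuous family consists of continuous functions. Thus $s_k,t_k\in\CC(\DDD^m\times\DDD^n,\DDD^m)$.

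For the two convenient-pair identities I would prove $s_k(t_k(x,y),y)=x$ (the other being symmetric) by splitting on whether $x\in\BBB^m$ or $x\in\SSS^{m-1}$. If $x\in\BBB^m$ then $t_k(x,y)=h^{-1}(g_k(h(x),y))\in\BBB^m$, so the ball branch of $s_k$ applies and
\[
s_k(t_k(x,y),y)=h^{-1}\big(f_k(g_k(h(x),y),y)\big)=h^{-1}\big(h(x)\big)=x,
\]
using $f_k(g_k(w,y),y)=w$. If $x\in\SSS^{m-1}$ then $t_k(x,y)=x\in\SSS^{m-1}$, so the sphere branch applies at both stages and the identity is trivial.

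For the focused condition I would fix $x\in\BBB^m$ and $y\neq y'$ in $\BBB^n$ and note that $s_k(x,y)=h^{-1}\big(h(x)+2^{-k}\varphi(y)\big)$; since $h^{-1}$ is injective, $s_k(x,y)=s_k(x,y')$ would force $\varphi(y)=\varphi(y')$, contradicting the injectivity of $\varphi\re\BBB^n$. For the boundary condition I would use $\partial(\DDD^m\times\DDD^n)=(\SSS^{m-1}\times\DDD^n)\cup(\DDD^m\times\SSS^{n-1})$: on the first piece $s_k(x,y)=t_k(x,y)=x$ holds by definition, while on the second piece, if $x\in\SSS^{m-1}$ we are back in the first piece, and if $x\in\BBB^m$ then $y\in\SSS^{n-1}$ gives $\varphi(y)=(0,\ldots,0)$, whence $s_k(x,y)=h^{-1}(h(x))=x$ and likewise for $t_k$. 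The only point demanding care — and what I regard as the main (mild) obstacle — is the branch bookkeeping in the composition identity: one must confirm that the outer map is evaluated on the correct piece, which works precisely because $h^{-1}$ always lands in $\BBB^m$ while the sphere is pointwise fixed, so the two branches never collide.
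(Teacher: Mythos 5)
Your proposal is correct and follows essentially the same route as the paper's proof: continuity via Lemma \ref{lemma_equicontinuous}, the composition identities via the cancellation $f_k\big(g_k(w,y),y\big)=w$ conjugated by $h$, the focused condition from the injectivity of $h^{-1}$ and of $\varphi\re\BBB^n$, and the boundary claim from the definition on $\SSS^{m-1}\times\DDD^n$ and from $\varphi\re\SSS^{n-1}\equiv(0,\ldots,0)$ on $\DDD^m\times\SSS^{n-1}$. Your only deviation is organizational: you split on $x\in\BBB^m$ versus $x\in\SSS^{m-1}$ alone (observing that the translation cancellation needs no restriction on $y$), whereas the paper uses three regions, and your explicit remark that $h^{-1}$ always lands in $\BBB^m$ so the branches never collide is exactly the point the paper leaves implicit in its ``unwinding the various definitions'' computation.
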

\begin{proof}
The continuity requirement follows from Lemma \ref{lemma_equicontinuous}. If $(x,y)\in\SSS^{m-1}\times\DDD^n$ then $s_k(x,y)=t_k(x,y)=x$ by definition. On the other hand, if $(x,y)\in\DDD^m\times\SSS^{n-1}$ then $s_k(x,y)=t_k(x,y)=x$ because $\varphi(y)=0$. To complete the proof that $(s_k,t_k)$ is a convenient pair for $(\DDD^m,\DDD^n)$, pick $(x,y)\in\BBB^m\times\BBB^n$. By unwinding the various definitions, one sees that
\begin{align}
s_k\big(t_k(x,y),y\big)&=s_k\bigg(h^{-1}\Big(g_k\big(h(x),y\big)\Big),y\bigg)\nonumber\\\nonumber
&=h^{-1}\left(f_k\Bigg(h\bigg(h^{-1}\Big(g_k\big(h(x),y\big)\Big)\bigg),y\Bigg)\right)\nonumber\\
&=h^{-1}\bigg(f_k\Big(g_k\big(h(x),y\big),y\Big)\bigg)\nonumber\\
&=h^{-1}\Big(f_k\big(h(x)-2^{-k}\cdot\varphi(y),y\big)\Big)\nonumber\\
&=h^{-1}\big(h(x)-2^{-k}\cdot\varphi(y)+2^{-k}\cdot\varphi(y)\big)\nonumber\\
&=h^{-1}\big(h(x)\big)\nonumber\\
&=x.\nonumber
\end{align}
Similarly, one sees that $t_k\big(s_k(x,y),y\big)=x$ for every $(x,y)\in\BBB^m\times\BBB^n$.

It remains to show that $(s_k,t_k)$ is focused on $(\BBB^m,\BBB^n)$. So pick $x\in\BBB^m$ and $y,y'\in\BBB^n$, and assume that $s_k(x,y)=s_k(x,y')$. Since $h^{-1}$ is injective, we must have
$$
h(x)+2^{-k}\cdot\varphi(y)=h(x)+2^{-k}\cdot\varphi(y'),
$$
hence $\varphi(y)=\varphi(y')$. But $\varphi\re\BBB^n$ is injective by our choice of $\varphi$, therefore $y=y'$, as desired.
\end{proof}

Next, by gluing (local) convenient pairs that are sufficiently close to the projection on the first coordinate, we will obtain a (global) convenient pair for a pair of manifolds with boundary.

\begin{lemma}\label{lemma_global_convenient_pair}
Let $1\leq n<m<\omega$, let $X$ and $Y$ be non-empty manifolds with boundary such that $\dime(X)=m$ and $\dime(Y)=n$, and let $D\subseteq X\times Y$ be countable. Assume that $D\cap\partial(X\times Y)=\varnothing$. Then there exists a convenient pair $(s,t)$ for $(X,Y)$ focused on $(\pi_X[D],\pi_Y[D])$, where we denote by $\pi_X:X\times Y\longrightarrow X$ and $\pi_Y:X\times Y\longrightarrow Y$ the natural projections.
\end{lemma}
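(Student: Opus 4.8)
The plan is to build $(s,t)$ as the limit of an infinite composition of \emph{transported copies} of the model pairs $(s_k,t_k)$ of Lemma \ref{lemma_local_convenient_pair}, satisfying the separation requirements one at a time in a back-and-forth fashion, and to invoke the Inductive Convergence Criterion (Theorem \ref{theorem_convergence}) to ensure that the limit exists and is again a convenient pair. First I would record the reductions. Since $D\cap\partial(X\times Y)=\varnothing$, both $A=\pi_X[D]$ and $B=\pi_Y[D]$ lie in the interiors $X\setminus\partial X$ and $Y\setminus\partial Y$; fix an admissible complete metric $\di$ on $X$ (available since $X$ is Polish). The basic building block is as follows: given a closed disc chart $\bar U\subseteq X\setminus\partial X$ with $\bar U\cong\DDD^m$, a closed disc chart $\bar V\subseteq Y\setminus\partial Y$ with $\bar V\cong\DDD^n$, and an index $k$, transport $s_k$ and $t_k$ through these charts on $\bar U\times\bar V$ and extend by $P(x,y)=Q(x,y)=x$ whenever $x\notin\bar U$ or $y\notin\bar V$. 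The boundary behavior in Lemma \ref{lemma_local_convenient_pair} (namely $s_k(x,y)=t_k(x,y)=x$ on $\partial(\DDD^m\times\DDD^n)$) makes $P,Q\in\CC(X\times Y,X)$, and one checks directly that $(P,Q)$ is a convenient pair for $(X,Y)$. Crucially, by Lemma \ref{lemma_equicontinuity} the quantities $\sup_{(x,y)}\di\big(P(x,y),x\big)$ and $\sup_{(x,y)}\di\big(Q(x,y),x\big)$ can be made smaller than any prescribed $\varepsilon>0$ by taking $k$ large.

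Next I would set up the composition. Compositions of convenient pairs are convenient pairs, where $(P\bullet P')(x,y)=P\big(P'(x,y),y\big)$ in the forward direction and the partner composes in the reverse order; this is a short computation from the identities in Definition \ref{definition_convenient_pair}. I will choose a countable sequence of transported blocks with indices tending to infinity so fast that, for each fixed $y$, the sequence of partial compositions $s(\cdot,y)$ consists of homeomorphisms of $X$ meeting conditions $(1)$ and $(2)$ of Theorem \ref{theorem_convergence} (the bounds hold uniformly in $y$ thanks to the uniform smallness above). Theorem \ref{theorem_convergence} then produces, for each $y$, mutually inverse limiting homeomorphisms $s(\cdot,y)$ and $t(\cdot,y)$ of $X$; and since the bounds are uniform in $y$, Proposition \ref{proposition_convergence} gives uniform convergence over all of $X\times Y$, so the limits $s,t$ are jointly continuous. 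Hence $(s,t)$ is a convenient pair for $(X,Y)$.

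The separation requirement is handled by the back-and-forth. Enumerate all triples $\big(a,\{b,b'\}\big)$ with $a\in A$ and $\{b,b'\}\in[B]^2$. Processing the $j$-th triple, if the current images $s(a,b)$ and $s(a,b')$ already differ, I record their positive distance as a margin and do nothing essential; if they currently coincide at some point $z\in X\setminus\partial X$, I insert a transported block built from an $X$-chart carrying $z$ to $0\in\BBB^m$ together with a $Y$-disc $\bar V$ chosen so that $b$ lies in the open part (image of $\BBB^n$) while $b'\notin\bar V$. Because the map $\varphi$ is injective on $\BBB^n$ and vanishes only on $\SSS^{n-1}$, this block moves the $b$-image off $z$ while fixing the $b'$-image, creating a positive margin for this triple. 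I then impose on all later blocks that their indices be large enough that the total future displacement of every point is below half of each margin established so far. Consequently all separations survive to the limit, and $(s,t)$ is focused on $(A,B)=(\pi_X[D],\pi_Y[D])$.

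The main obstacle is the interlocking bookkeeping of the two budgets: the block indices must be large enough both to satisfy the uniform hypotheses of Theorem \ref{theorem_convergence} (guaranteeing a continuous limit with continuous inverse partner) and to keep every previously achieved separation intact, while at the same time each newly created margin must be shown to exceed the entire (smaller) tail of subsequent displacements. A secondary point requiring care is that the defining identities $s\big(t(x,y),y\big)=x$ and $t\big(s(x,y),y\big)=x$, valid at every finite stage, pass to the uniform limit; this follows from the joint continuity established above, exactly as in the proof of Theorem \ref{theorem_convergence}. The remaining verifications—continuity of each transported block at the chart boundaries and the convenient-pair identities for a single block—are routine given Lemmas \ref{lemma_equicontinuity} and \ref{lemma_local_convenient_pair}.
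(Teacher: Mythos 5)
Your proposal is correct in its essentials, but it takes a genuinely different route from the paper. The paper's proof is a one-step gluing with no limiting process: by recursion it fixes product charts $W_k=U_k\times V_k$ (with $\cl(U_k)\cong\DDD^m$, $\cl(V_k)\cong\DDD^n$) whose closures are \emph{pairwise disjoint} and which cover the countable set in question, transports the local pairs of Lemma \ref{lemma_local_convenient_pair} into these charts with displacement at most $2^{-k}$, and defines $s,t$ to be the transported pair on each $W_k$ and the first-coordinate projection everywhere else; the $2^{-k}$ smallness is used only to get continuity of the glued maps at accumulation points of $\bigcup_{k\in\omega}\cl(W_k)$, the convenient-pair identities hold exactly because the supports are disjoint, and Theorem \ref{theorem_convergence} is never invoked. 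You instead compose infinitely many globally defined blocks with overlapping supports, and pay for that freedom with the full convergence bookkeeping. One justification in your text is off, though the step itself is fine: condition $(2)$ of Theorem \ref{theorem_convergence} does \emph{not} follow from "uniform smallness" of the later blocks alone; you must choose each new block's displacement \emph{after} inspecting the previous partial composition, using the uniform continuity of $H_n^{-1}$ on the compact set $\cl(U_{n+1})\times\cl(V_{n+1})$ carrying the new block (outside it nothing moves). Since you do phrase the choice adaptively ("indices tending to infinity so fast that\dots"), this is a repairable gloss rather than a gap.

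What your route buys is a more robust treatment of the focusing requirement: every triple $(a,\{b,b'\})$ with $a\in\pi_X[D]$ and $\{b,b'\}\in[\pi_Y[D]]^2$ receives a dedicated block and a protected margin, so focusing holds in the limit by construction. In the paper's approach this is the one delicate point: after replacing $D$ by the grid $\pi_X[D]\times\pi_Y[D]$, disjointness of the $\cl(W_k)$ forces two grid points sharing their $X$-coordinate into \emph{different} pieces $W_k$, $W_j$, and then the inequality $s_k(x,y)\neq s_j(x,y')$ is not an automatic consequence of the listed properties of the transported pairs (it requires coordinating the choices of charts), whereas within a single $W_k$ it follows from local focusing. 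So your back-and-forth is longer and machinery-heavier --- essentially the Anderson--Curtis--van Mill style of argument that the paper reserves for Theorem \ref{theorem_main} --- but it makes fully explicit precisely the verification that the paper's proof compresses into "it is clear."
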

\begin{proof}
Throughout this proof, we will use $\cl$ to denote closure in $X$, $Y$ or $X\times Y$ (it will be clear from the context which one). Begin by fixing open subsets $W_k$ of $X\times Y$ for $k\in\omega$ that satisfy the following conditions:
\begin{enumerate}
\item $\cl(W_i)\cap\cl(W_j)=\varnothing$ whenever $i\neq j$,
\item $D\subseteq\bigcup_{k\in\omega}W_k$,
\item Each $W_k=U_k\times V_k$, where $U_k$ is an open subset of $X$ and $V_k$ is an open subset of $Y$,
\item Each $\cl(U_k)$ is homeomorphic to $\DDD^m$ via a homeomorphism that maps $U_k$ onto $\BBB^m$,
\item Each $\cl(V_k)$ is homeomorphic to $\DDD^n$ via a homeomorphism that maps $V_k$ onto $\BBB^n$.
\end{enumerate}
The sets $W_k$ can easily be obtained by recursion, making sure to include at every stage the minimal element of $D$ (in some fixed well-ordering of type $\omega$) that has not been covered yet. Conditions $(4)$ and $(5)$ can be satisfied because of the assumption that $D\cap\partial(X\times Y)=\varnothing$.

Now fix an admissible metric $\di$ on $X$, but recall that, by Theorem \ref{theorem_compact_open}, uniform convergence in $\CC(K,X)$ for a compact subset $K$ of $X\times Y$ will not depend on the choice of metric. Using this observation and Lemma \ref{lemma_local_convenient_pair}, one can obtain $(s_k,t_k)$ for $k\in\omega$ such that the following conditions are satisfied for each $k$:
\begin{enumerate}
\item[(6)] $(s_k,t_k)$ is a convenient pair for $\big(\cl(U_k),\cl(V_k)\big)$,
\item[(7)] $(s_k,t_k)$ is focused on $(U_k,V_k)$,
\item[(8)] $s_k(x,y)=x$ for every $(x,y)\in\cl(W_k)\setminus W_k$,
\item[(9)] $t_k(x,y)=x$ for every $(x,y)\in\cl(W_k)\setminus W_k$,
\item[(10)] $\di\big(s_k(x,y),x\big)\leq 2^{-k}$ for every $(x,y)\in\cl(W_k)$,
\item[(11)] $\di\big(t_k(x,y),x\big)\leq 2^{-k}$ for every $(x,y)\in\cl(W_k)$.
\end{enumerate}

Finally, define $s,t:X\times Y\longrightarrow X$ by setting
$$
s(x,y)=
\left\{
\begin{array}{ll}
s_k(x,y) & \textrm{if }(x,y)\in W_k,\\
x & \textrm{if }(x,y)\notin\bigcup_{k\in\omega}W_k,
\end{array}
\right.
$$
$$
t(x,y)=
\left\{
\begin{array}{ll}
t_k(x,y) & \textrm{if }(x,y)\in W_k,\\
x & \textrm{if }(x,y)\notin\bigcup_{k\in\omega}W_k.
\end{array}
\right.
$$
Using the fact that each $s_k$ is continuous, plus conditions $(1)$, $(8)$ and $(10)$, it is not hard to show that $s$ is continuous. Similarly, one sees that $t$ is continuous. Then, using conditions $(6)$, $(8)$ and $(9)$, it is straightforward to complete the proof that $(s,t)$ is a convenient pair for $(X,Y)$. Finally, by conditions $(2)$ and $(7)$, it is clear that $(s,t)$ is focused on $(\pi_X[D],\pi_Y[D])$.
\end{proof}

In conclusion, by combining the methods of Section \ref{section_general} with the convenient pairs given by Lemma \ref{lemma_global_convenient_pair}, we will obtain the desired general position result for products of manifolds with boundary. However, one last auxiliary result will be needed.

\begin{lemma}\label{lemma_gpp_manifolds_auxiliary}
Let $\kappa$ be a cardinal, and let $X_\alpha$ for $\alpha\in\kappa$ be non-empty manifolds with boundary such that each $\dim(X_\alpha)\geq 1$. Set $X=\prod_{\alpha\in\kappa}X_\alpha$, and let $D\subseteq X$ be countable. Assume that each $X_\alpha$ is connected and that condition $(\partial)$ holds. Then there exists $h\in\HH(X)$ such that $\pi_0\re h[D]$ is injective and $h[D]\cap\partial X=\varnothing$.
\end{lemma}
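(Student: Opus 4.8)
The plan is to reduce everything to the case $\kappa\le\omega$, and to settle that case by combining the second part of Lemma \ref{lemma_exists_gp} with the countable dense homogeneity supplied by Theorem \ref{theorem_yang}. The point is that the genuinely hard work (actually manufacturing homeomorphisms that dissolve the pseudoboundary) is already encapsulated in Theorem \ref{theorem_yang}, so here I only need to produce a suitable target set and then invoke $\CDH$.

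Assume first that $\kappa\le\omega$. I claim that $\partial X$ is meager: under condition $(\partial)$ the pseudoboundary is either empty, or the countable union $\bigcup_{\alpha}\pi_\alpha^{-1}[\partial X_\alpha]$ of sets that are closed and have empty interior (the latter because $\dim(X_\alpha)\ge 1$ forces $\partial X_\alpha$ to be nowhere dense in $X_\alpha$). Since each $X_\alpha$ is crowded, Hausdorff, Baire, and has a countable $\pi$-base, being a manifold of dimension at least $1$, the second part of Lemma \ref{lemma_exists_gp}, applied with $M=\partial X$, yields a countable dense $E\subseteq X$ that is in general position and satisfies $E\cap\partial X=\varnothing$. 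Being in general position, $E$ automatically has $\pi_0\re E$ injective, since any two of its points differ in every coordinate. Now enlarge $D$ to a countable dense set $\widehat D\supseteq D$; as $X$ is $\CDH$ by Theorem \ref{theorem_yang}, there is $h\in\HH(X)$ with $h[\widehat D]=E$, and then $h[D]\subseteq E$ is as required.

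For uncountable $\kappa$ I would decompose the product into countable blocks and apply the previous paragraph inside each block. Write $B=\{\alpha\in\kappa:\partial X_\alpha\ne\varnothing\}$, which by condition $(\partial)$ is empty or infinite. Partition $\kappa=\bigsqcup_{j\in J}C_j$ into countably infinite sets such that: a distinguished block $C_{j_0}$ contains $0$ together with a countable family of coordinates separating the points of $D$ (so that $\tau_{C_{j_0}}\re D$ is injective); and every block meets $B$ in a set that is empty or infinite. The second requirement can be met precisely because $B$ itself is empty or infinite --- this is exactly the place where condition $(\partial)$ is indispensable, and it is what forbids weakening it to a finite nonzero number of nonempty boundaries. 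Each subproduct $Y_j=X_{C_j}$ then satisfies the hypotheses of the countable case (condition $(\partial)$ holds inside $C_j$, and the factors are connected with $\dim\ge1$), so that case provides $h_j\in\HH(Y_j)$ with $h_j[\tau_{C_j}[D]]\cap\partial Y_j=\varnothing$, and for $C_{j_0}$, taking the r\^ole of coordinate $0$ to be the index $0$ itself, with the additional feature that the $X_0$-coordinate is injective on the image. Setting $h=\prod_{j\in J}h_j\in\HH(X)$, the off-boundary conclusions in the individual blocks combine to give $h[D]\cap\partial X=\varnothing$; and since $\tau_{C_{j_0}}\re D$ is injective, distinct points of $D$ have distinct images under $\tau_{C_{j_0}}$, whence the block-injectivity at coordinate $0$ upgrades to injectivity of $\pi_0\re h[D]$.

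The main obstacle, then, is not analytic but organizational: arranging the partition so that each block separately satisfies condition $(\partial)$, while simultaneously loading the block through $0$ with enough coordinates to separate all pairs from $D$. Once this bookkeeping is in place, the remaining verifications --- meagerness of each $\partial Y_j$, and the fact that a product of block homeomorphisms is again a homeomorphism --- are routine, and the substance of the argument is deferred to Theorem \ref{theorem_yang} and Lemma \ref{lemma_exists_gp}.
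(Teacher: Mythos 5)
Your proof is correct and follows essentially the same route as the paper's: decompose $\kappa$ into countably infinite blocks (a distinguished one containing $0$ and separating the points of $D$, each block inheriting condition $(\partial)$), use the Baire-category argument of Lemma \ref{lemma_exists_gp} to produce dense target sets missing the pseudoboundary (in general position for the distinguished block), and map into them via the $\CDH$ of Theorem \ref{theorem_yang} after enlarging to a countable dense set. The only difference is organizational --- you prove the case $\kappa\leq\omega$ first and reduce the uncountable case to it, whereas the paper treats the uncountable case directly and calls the countable case ``similar but easier.''
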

\begin{proof}
We will assume that $\kappa$ is uncountable, since the proof in the case $\kappa\leq\omega$ is similar but easier. As in the proof of Lemma \ref{lemma_gpp}, with $\Omega^\ast=\{\alpha\in\kappa:\partial X_\alpha\neq\varnothing\}$, one can obtain $\Omega_\alpha\in [\kappa]^\omega$ for $\alpha\in\kappa$ such that the following conditions hold:
\begin{itemize}
\item $\Omega_\alpha\cap\Omega_\beta=\varnothing$ whenever $\alpha\neq\beta$,
\item $\bigcup_{\alpha\in\kappa}\Omega_\alpha$,
\item $0\in\Omega_0$,
\item $\tau_{\Omega_0}\re D$ is injective,
\item Each $X_{\Omega_\alpha}$ satisfies condition $(\partial)$.
\end{itemize}

Since each $X_{\Omega_\alpha}$ is a Baire space, we can fix countable dense subsets $E_\alpha$ of $X_{\Omega_\alpha}$ for $\alpha\in\kappa$ such that each $E_\alpha\cap\partial X_{\Omega_\alpha}=\varnothing$. By Lemma \ref{lemma_exists_gp}, we can also assume without loss of generality that $E_0$ is in general position (in $X_{\Omega_0}$). By Theorem \ref{theorem_yang}, we can obtain homeomorphisms $h_\alpha\in\HH(X_{\Omega_\alpha})$ for $\alpha\in\kappa$ such that each $h_\alpha\big[\tau_{\Omega_\alpha}[D]\big]\subseteq E_\alpha$. It is straightforward to check that $h=\prod_{\alpha\in\kappa}h_\alpha$ will be as desired.
\end{proof}

\begin{theorem}\label{theorem_gpp_manifolds}
Let $\kappa\geq 1$ be a cardinal, and let $X_\alpha$ for $\alpha\in\kappa$ be non-empty manifolds with boundary such that $1\leq\dime(X_0)<\dime(X_\alpha)$ for every $\alpha\in\kappa\setminus\{0\}$. Assume that each $X_\alpha$ is connected and that condition $(\partial)$ holds. Then $\prod_{\alpha\in\kappa}X_{\alpha}$ has the $\GPP$.
\end{theorem}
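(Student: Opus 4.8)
The plan is to deduce the $\GPP$ from Lemma \ref{lemma_gpp}, applied with $\Omega^\ast=\{\alpha\in\kappa:\partial X_\alpha\neq\varnothing\}$. Condition $(\partial)$ guarantees that this $\Omega^\ast$ is either empty or infinite, as required by that lemma. So it will suffice to verify its two hypotheses: first, that every relevant countably infinite subproduct has the $\GPP$; and second, that the whole product has the $\WGPP$. (The case $\kappa\leq\omega$ can be dispatched immediately, since then $\prod_{\alpha\in\kappa}X_\alpha$ is itself a countable product of connected manifolds with boundary satisfying condition $(\partial)$, hence $\CDH$ by Theorem \ref{theorem_yang}; being crowded with a countable $\pi$-base, it then has the $\GPP$ by Proposition \ref{proposition_cdh_implies_gpp}. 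So I assume $\kappa$ is uncountable.)

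For the first hypothesis, fix $\Omega\in[\kappa]^\omega$ with $\Omega\cap\Omega^\ast$ empty or infinite. The key observation is that $\{\alpha\in\Omega:\partial X_\alpha\neq\varnothing\}=\Omega\cap\Omega^\ast$, so $X_\Omega$ is a countably infinite product of connected manifolds with boundary for which condition $(\partial)$ holds. By Theorem \ref{theorem_yang} it is $\CDH$, and since each factor is crowded (as $\dime(X_\alpha)\geq 1$) with a countable $\pi$-base, Proposition \ref{proposition_cdh_implies_gpp} yields that $X_\Omega$ has the $\GPP$.

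For the second hypothesis, let $D\subseteq X$ be countable. Using Lemma \ref{lemma_gpp_manifolds_auxiliary}, I may replace $D$ by a homeomorphic image and assume without loss of generality that $\pi_0\re D$ is injective and $D\cap\partial X=\varnothing$. This is precisely the set-up needed to feed Lemma \ref{lemma_wgpp}: its first hypothesis holds outright, and for each $\alpha\in\kappa\setminus\{0\}$ I must produce a convenient pair for $(X_\alpha,X_0)$ focused on $(\pi_\alpha[D],\pi_0[D])$. To this end, consider $D_\alpha=\{(d(\alpha),d(0)):d\in D\}\subseteq X_\alpha\times X_0$, so that $\pi_{X_\alpha}[D_\alpha]=\pi_\alpha[D]$ and $\pi_{X_0}[D_\alpha]=\pi_0[D]$. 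Since $\partial(X_\alpha\times X_0)=(\partial X_\alpha\times X_0)\cup(X_\alpha\times\partial X_0)$ and no $d\in D$ has $d(\alpha)\in\partial X_\alpha$ or $d(0)\in\partial X_0$ (because $D\cap\partial X=\varnothing$), we obtain $D_\alpha\cap\partial(X_\alpha\times X_0)=\varnothing$. As $1\leq\dime(X_0)<\dime(X_\alpha)<\omega$, Lemma \ref{lemma_global_convenient_pair} applies and delivers the required convenient pair focused on $(\pi_\alpha[D],\pi_0[D])$. Lemma \ref{lemma_wgpp} then brings $D$ into weak general position, establishing the $\WGPP$ and completing the verification.

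The main obstacle is not in this assembly, which is largely bookkeeping, but in arranging the hypotheses so that the heavy machinery clicks into place. The crucial maneuver is the use of Lemma \ref{lemma_gpp_manifolds_auxiliary} to chase $D$ off the pseudoboundary $\partial X$ while simultaneously making $\pi_0\re D$ injective: without $D\cap\partial X=\varnothing$ one cannot invoke Lemma \ref{lemma_global_convenient_pair}, since that lemma fundamentally requires its countable set to avoid the boundary. The other delicate point is the matching of condition $(\partial)$ between the full product and the subproducts $X_\Omega$ — ensured by the choice of $\Omega^\ast$ together with the structure of the $\Omega$'s that Lemma \ref{lemma_gpp} permits — which is exactly what lets Theorem \ref{theorem_yang} supply the $\GPP$ of the subproducts.
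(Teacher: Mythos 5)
Your proposal is correct and follows essentially the same route as the paper's proof: Lemma \ref{lemma_gpp} with $\Omega^\ast=\{\alpha\in\kappa:\partial X_\alpha\neq\varnothing\}$, Theorem \ref{theorem_yang} together with Proposition \ref{proposition_cdh_implies_gpp} for the countable subproducts, and Lemma \ref{lemma_gpp_manifolds_auxiliary} followed by Lemmas \ref{lemma_global_convenient_pair} and \ref{lemma_wgpp} for the $\WGPP$. Your explicit verification that the set $D_\alpha=\{(d(\alpha),d(0)):d\in D\}$ avoids $\partial(X_\alpha\times X_0)$ is a detail the paper leaves implicit, and is a welcome addition.
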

\begin{proof}
If $\kappa$ is finite, the desired results follows from Proposition \ref{proposition_cdh_implies_gpp}, so assume that $\kappa$ is infinite. Set $X=\prod_{\alpha\in\kappa}X_{\alpha}$ and $\Omega^\ast=\{\alpha\in\kappa:\partial X_\alpha\neq\varnothing\}$. Using Theorem \ref{theorem_yang} and Proposition \ref{proposition_cdh_implies_gpp}, one sees that $X_\Omega$ has the $\GPP$ whenever $\Omega\in [\kappa]^\omega$ is such that $\Omega\cap\Omega^\ast$ is empty or infinite. Therefore, by Lemma \ref{lemma_gpp}, it will be enough to show that $X$ has the $\WGPP$.

So pick a countable $D\subseteq X$. By Lemma \ref{lemma_gpp_manifolds_auxiliary}, we can assume without loss of generality that $\pi_0\re D$ is injective and $D\cap\partial X=\varnothing$. In particular, it is possible to apply Lemma \ref{lemma_global_convenient_pair}, which for every $\alpha\in\kappa\setminus\{0\}$ yields a convenient pair $(s_\alpha,t_\alpha)$ for $(X_\alpha,X_0)$ focused on $(\pi_\alpha[D],\pi_0[D])$. By Lemma \ref{lemma_wgpp}, it follows that there exists $h\in\HH(X)$ such that $h[D]$ is in weak general position.
\end{proof}

\section{The main result and its corollaries}\label{section_main}

In this section, we will finally keep the promise of a ``unified treatment'' made in the abstract. To see that all the hypotheses of Theorem \ref{theorem_main} are indispensable, see the table at the end of Section \ref{section_optimality}. Also notice that the cardinal $\kappa$ in the statement of Theorem \ref{theorem_main} need not be infinite. In fact, this result might be viewed as a ``multivariable'' version of Theorem \ref{theorem_fundamental}. However, in the case $\kappa=1$, the requirement of strong $n$-homogeneity is unnecessary, while the $\GPP$ holds trivially.

\begin{theorem}\label{theorem_main}
Let $\kappa<\pppp$ be a cardinal, and let $X_\alpha$ for $\alpha\in\kappa$ be Polish spaces. Set $X=\prod_{\alpha\in\kappa}X_\alpha$. Assume that the following conditions are satisfied:
\begin{itemize}
\item Each $X_\alpha$ is strongly locally homogeneous,
\item Each $X_\alpha$ is strongly $n$-homogeneous for every $n\in\omega$,
\item $X$ has the $\GPP$.
\end{itemize}
Then $X$ is $\CDH$.
\end{theorem}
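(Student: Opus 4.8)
The plan is to reduce the problem to a coordinate-wise construction, since for uncountable $\kappa$ the space $X$ is separable (by Hewitt--Marczewski--Pondiczery, as $\kappa<\pppp\le\cccc$) but not metrizable, so the Inductive Convergence Criterion (Theorem \ref{theorem_convergence}) cannot be applied to $X$ directly. First I would use the $\GPP$ to normalize: given countable dense $D,E\subseteq X$, fix $h_D,h_E\in\HH(X)$ with $h_D[D]$ and $h_E[E]$ in general position, so that it suffices to match two countable dense sets in general position (the final homeomorphism being $h_E^{-1}\circ g\circ h_D$). Thus I assume $D=\{d_n:n\in\omega\}$ and $E=\{e_n:n\in\omega\}$ are in general position; in particular each $\pi_\alpha\re D$ and $\pi_\alpha\re E$ is injective with dense image in $X_\alpha$.

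The goal then becomes to produce a single bijection $\sigma:\omega\to\omega$ together with homeomorphisms $h_\alpha\in\HH(X_\alpha)$ such that $h_\alpha\big(d_n(\alpha)\big)=e_{\sigma(n)}(\alpha)$ for every $n\in\omega$ and $\alpha\in\kappa$; then $h=\prod_{\alpha\in\kappa}h_\alpha$ is a homeomorphism of $X$ with $h(d_n)=e_{\sigma(n)}$, whence $h[D]=E$. Each $h_\alpha$ will be obtained on the Polish space $X_\alpha$ as a uniform limit $h_\alpha=\lim_k H_{\alpha,k}$ via Theorem \ref{theorem_convergence}, built by a back-and-forth: \emph{forth} steps guarantee that every $d_n(\alpha)$ is eventually matched (controlling condition $(1)$), while \emph{back} steps guarantee that every $e_m(\alpha)$ is eventually hit (controlling condition $(2)$). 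At each stage the current finite partial matching is a bijection between two finite subsets of $X_\alpha$ consisting of \emph{distinct} points (by general position), so strong $n$-homogeneity guarantees that it extends to a homeomorphism of $X_\alpha$; this removes any topological obstruction (such as an order-type obstruction) and ensures that the finite stages are never stuck, while strong local homogeneity supplies the small-support perturbations needed to nudge one point toward a nearby target while fixing the previously locked points, keeping the $k$-th perturbation below $2^{-k}$.

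The delicate point, and the main obstacle, is that a \emph{single} index $\sigma(n)$ must serve as an admissible match \emph{simultaneously at all $\kappa$ coordinates}, so that all $\kappa$ back-and-forths can be driven by the same bijection. This is exactly where the hypothesis $\kappa<\pppp$ enters. For a fixed stage and a fixed domain point to be matched, let $A_\alpha\subseteq\omega$ be the set of range-indices that are admissible at coordinate $\alpha$ (roughly, those $m$ for which $e_m(\alpha)$ lies in the prescribed small region around the current image). The family $\{A_\alpha:\alpha\in\kappa\}$ has the strong finite intersection property: for any \emph{finite} $\Omega\subseteq\kappa$, the set $\tau_\Omega[E]$ is dense in the finite subproduct $X_\Omega$, so infinitely many indices are admissible at all coordinates of $\Omega$ at once. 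Since this family has size $\kappa<\pppp$, it admits an infinite pseudointersection, and the plan is to thread the entire back-and-forth through such pseudointersections so that each coordinate is forced ``off schedule'' only finitely often; the per-coordinate perturbation series then remains summable, and Theorem \ref{theorem_convergence} yields each $h_\alpha\in\HH(X_\alpha)$. Verifying that the pseudointersections can be chosen coherently across the countably many stages, so that \emph{every} one of the possibly uncountably many coordinates enjoys summable perturbations and hence a genuine limit homeomorphism, is the technical heart of the argument; the reduction to general position, the product structure of $h$, and the appeal to Theorem \ref{theorem_fundamental} on each factor are the routine surrounding steps.
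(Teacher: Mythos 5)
Your skeleton matches the paper's in every ingredient: normalize with the $\GPP$, seek a product homeomorphism $h=\prod_{\alpha\in\kappa}h_\alpha$ realizing a single matching of $D$ with $E$, build each $h_\alpha$ by Theorem \ref{theorem_convergence}, use strong $n$-homogeneity to realize finite bijections and strong local homogeneity for the small-support nudges, and let $\kappa<\pppp$ do the coordination across coordinates. But the coordination mechanism you propose --- one global $\omega$-staged back-and-forth in which, at each stage, you take a pseudointersection of the admissibility sets $\{A_\alpha:\alpha\in\kappa\}$ and pick the match from it --- has a genuine gap, and it is exactly the point you defer as ``the technical heart.'' A pseudointersection $A$ only gives $A\setminus A_\alpha$ finite for each $\alpha$; it does not give an element of $\bigcap_{\alpha\in\kappa}A_\alpha$, and in general no such element exists, since a match admissible at \emph{all} coordinates must lie in a ``box'' constraining infinitely many coordinates, which is not open and which a countable dense set can avoid outright. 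Concretely, in $(2^\omega)^{\omega_1}$ one can build countable dense $D$ and $E$ with $D\cup E$ in general position such that every $d\in D$ and $e\in E$ satisfy $\di_\alpha\big(d(\alpha),e(\alpha)\big)\geq 1/2$ for all but finitely many $\alpha$ (give all points of $D$ one fixed bit-$0$ pattern and all points of $E$ the complementary pattern, deviating only at the finitely many coordinates each density requirement constrains, and separate points using higher bits). For such a pair, \emph{every} candidate match is inadmissible at all but finitely many coordinates at every stage where the prior steps have not already moved things; so either some fixed coordinate suffers infinitely many big corrective steps (destroying convergence of its $h_\alpha$), or the earlier big steps at each coordinate must have been aimed at matches that will only be chosen later --- a circularity that stage-wise pseudointersection choices cannot break. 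A pigeonhole argument makes the same point abstractly: your plan forces an uncountable $T\subseteq\kappa$ and a stage $N$ past which all chosen matches are admissible at every $\alpha\in T$ simultaneously, which is precisely the non-open, uncountable-support requirement that fails here.

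The paper's proof breaks the circularity by abandoning the global timeline. It defines a partial order $\PPP$ whose conditions constrain only \emph{finitely} many coordinates: a condition is a triple $(F,\zeta,\sigma)$ with $F\in[\kappa]^{<\omega}$, an $\alpha$-suitable tuple $\zeta(\alpha)$ of homeomorphisms for each $\alpha\in F$, a finite partial matching $\sigma$, and a ``promise'' that matched points stay put at activated coordinates. A coordinate imposes no smallness requirements before it enters $F$; upon activation, strong $n$-homogeneity realizes the \emph{entire} finite matching accumulated so far in a single unrestricted first step --- which Theorem \ref{theorem_convergence} permits --- and only the steps after activation must be small. Thus each coordinate automatically has exactly one big step, in its own private timeline, and matches are always chosen against open constraints at finitely many coordinates (so density of $E$ and $D$ suffices). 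The global coherence over all $\kappa$ coordinates, which your sketch attempts to assemble by hand, is then delivered in one stroke by Bell's theorem \cite[Theorem III.3.61]{kunen}: $\PPP$ is $\sigma$-centered (verified via an independent family of functions), the dense sets $D^{\coord}_\alpha$, $D^{\dom}_d$, $D^{\ran}_e$ number fewer than $\pppp$, and a filter meeting them all yields the matching and all the $\alpha$-suitable sequences simultaneously. In short, what you label a verification is the actual theorem to be proved; raw pseudointersections applied stage by stage do not prove it, and the known route is Martin's Axiom for $\sigma$-centered posets applied to a finite-support poset of the above kind.
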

\begin{proof}
Clearly, we can assume that $\kappa\geq 1$ and that each $|X_\alpha|\geq 2$. If $\kappa$ is finite, we will also assume that each $X_\alpha$ is crowded; using Proposition \ref{proposition_slh}, it is easy to realize that this will not result in any loss of generality. Fix a pair $(D,E)$ of countable dense subsets of $X$. Since $X$ has the $\GPP$, we can assume without loss of generality that $D\cup E$ is in general position. Set $D_\alpha=\pi_\alpha[D]$ and $E_\alpha=\pi_\alpha[E]$ for $\alpha\in\kappa$. Fix admissible complete metrics $\di_\alpha$ on $X_\alpha$ for $\alpha\in\kappa$. Given $x\in X_\alpha$ and $\varepsilon >0$, we will use the notation $\Ball_\alpha(x,\varepsilon)=\{z\in X:\di_\alpha(z,x)<\varepsilon\}$ for the open ball around $x$ of radius $\varepsilon$.

Using the homogeneity properties of the $X_\alpha$, plus the fact that $D$ and $E$ are countable, it is possible to fix countable subgroups $\HH_\alpha$ of $\HH(X_\alpha)$ for $\alpha\in\kappa$ such that the following conditions are satisfied for each $\alpha$:
\begin{itemize}
\item[(1)] For every finite bijection $\sigma$ such that $\dom(\sigma)\subseteq D_\alpha$ and $\ran(\sigma)\subseteq E_\alpha$ there exists $h\in\HH_\alpha$ such that $\sigma\subseteq h$,
\item[(2)] For every $\varepsilon >0$, $d\in D_\alpha$ and $H\in\HH_\alpha$ there exists $0<\delta <\varepsilon$ such that for every $e\in E_\alpha\cap\Ball_\alpha\big(H(d),\delta\big)$ there exists $h\in\HH_\alpha$ such that $h\big(H(d)\big)=e$ and $\supp(h)\subseteq\Ball_\alpha\big(H(d),\delta\big)$,
\item[(3)] For every $\varepsilon >0$, $e\in E_\alpha$ and $H\in\HH_\alpha$ there exists $0<\delta <\varepsilon$ such that for every $d\in D_\alpha\cap H^{-1}\big[\Ball_\alpha(e,\delta)\big]$ there exists $h\in\HH_\alpha$ such that $h\big(H(d)\big)=e$ and $\supp(h)\subseteq\Ball_\alpha(e,\delta)$.
\end{itemize}

Given $\alpha\in\kappa$, we will say that $s$ is an \emph{$\alpha$-suitable tuple} if $s=(h_0,\ldots, h_n)$ for some $n\in\omega$ and the following conditions hold, where $H_i=h_i\circ\cdots\circ h_0$ for $i\leq n$:
\begin{itemize}
\item[(4)] $h_i\in\HH_\alpha$ for every $i\leq n$,
\item[(5)] $\di_\alpha\big(h_{i+1}(x),x\big)\leq 2^{-i}$ for every $i<n$ and $x\in X_\alpha$,
\item[(6)] $\di_\alpha\big(H_{i+1}^{-1}(x),H_i^{-1}(x)\big)\leq 2^{-i}$ for every $i<n$ and $x\in X_\alpha$.
\end{itemize}
Given such an $s$, we will use the notation $H_s=h_n\circ\cdots\circ h_0$. Similarly, we will say that $S$ is an \emph{$\alpha$-suitable sequence} if $S=(h_i:i\in\omega)$ and $(h_0,\ldots, h_n)$ is an $\alpha$-suitable tuple for every $n\in\omega$. Given such an $S$, by Theorem \ref{theorem_convergence}, the sequence $(h_n\circ\cdots\circ h_0:n\in\omega)$ will converge to an element of $\HH(X_\alpha)$, which we will denote by $H_S$.

Denote by $\PPP$ the set of all triples of the form $p=(F,\zeta,\sigma)=(F^p,\zeta^p,\sigma^p)$ such that the following requirements are satisfied:
\begin{itemize}
\item[(7)] $F\in [\kappa]^{<\omega}$,
\item[(8)] $\zeta$ is a function such that $\dom(\zeta)=F$ and each $\zeta(\alpha)$ is an $\alpha$-suitable tuple,
\item[(9)] $\sigma$ is a finite bijection such that $\dom(\sigma)\subseteq D$ and $\ran(\sigma)\subseteq E$,
\item[(10)] $H_{\zeta(\alpha)}\big(d(\alpha)\big)=\sigma(d)(\alpha)$ for every $\alpha\in F$ and $d\in\dom(\sigma)$.
\end{itemize}
Order $\PPP$ by declaring $q\leq p$ if the following conditions are satisfied:
\begin{itemize}
\item[(11)] $F^q\supseteq F^p$,
\item[(12)] $\zeta^q(\alpha)\supseteq\zeta^p(\alpha)$ for every $\alpha\in F^p$,
\item[(13)] $\sigma^q\supseteq\sigma^p$,
\item[(14)] If $\zeta^p(\alpha)=(h_0,\ldots,h_n)$ and $\zeta^q(\alpha)=(h_0,\ldots,h_m)$ for some $\alpha\in F^p$, then $h_i\Big(H_{\zeta^p(\alpha)}\big(d(\alpha)\big)\Big)=H_{\zeta^p(\alpha)}\big(d(\alpha)\big)$ whenever $n<i\leq m$ and $d\in\dom(\sigma^p)$.
\end{itemize}
Intuitively, condition $(14)$ expresses the ``promise'' made by $p$ that $d(\alpha)$ has reached its final destination (which will be in $E_\alpha$ by condition $(10)$).

Given $\alpha\in\kappa$, $d\in D$ and $e\in E$, make the following definitions:
\begin{itemize}
\item $D^\coord_\alpha=\{p\in\PPP:\alpha\in F^p\}$,
\item $D^\dom_d=\{p\in\PPP:d\in\dom(\sigma^p)\}$,
\item $D^\ran_e=\{p\in\PPP:e\in\ran(\sigma^p)\}$.
\end{itemize}

\noindent\textbf{Claim 1.} Each $D^\coord_\alpha$ is a dense subset of $\PPP$.

\noindent\textit{Proof.} Fix $\alpha\in\kappa$. Pick $p=(F,\zeta,\sigma)\in\PPP$, and assume without loss of generality that $\alpha\notin F$. Since $D\cup E$ is in general position, all $d(\alpha)$ are distinct for $d\in\dom(\sigma)$ and all $e(\alpha)$ are distinct for $e\in\ran(\sigma)$. Therefore, by condition $(1)$, there exists $h\in\HH_\alpha$ such that $h\big(d(\alpha)\big)=\sigma(d)(\alpha)$ for every $d\in\dom(\sigma)$. Set $s=(h)$, and observe that $s$ is an $\alpha$-suitable tuple. Finally, set $q=\big(F\cup\{\alpha\},\zeta\cup\{(\alpha,s)\},\sigma\big)$. It is straightforward to check that $q\in D^\coord_\alpha$ and $q\leq p$. $\blacksquare$

\noindent\textbf{Claim 2.} Each $D^\dom_d$ is a dense subset of $\PPP$.

\noindent\textit{Proof.} Fix $d\in D$. Pick $p=(F,\zeta,\sigma)\in\PPP$, and assume without loss of generality that $d\notin\dom(\sigma)$. Given $\alpha\in F$ with $\zeta(\alpha)=(h_0,\ldots,h_n)$, set $x_\alpha=H_{\zeta(\alpha)}\big(d(\alpha)\big)$, then pick $\varepsilon_\alpha >0$ such that the following requirements are satisfied:
\begin{itemize}
\item[(15)] $\varepsilon_\alpha\leq 2^{-(n+1)}$,
\item[(16)] $\di_\alpha\big(H^{-1}_{\zeta(\alpha)}(x),H^{-1}_{\zeta(\alpha)}(x_\alpha)\big)\leq 2^{-(n+1)}$ for every $x\in\Ball_\alpha(x_\alpha,\varepsilon_\alpha)$,
\item[(17)] $\Ball_\alpha(x_\alpha,\varepsilon_\alpha)\cap\pi_\alpha\big[\ran(\sigma)\big]=\varnothing$.
\end{itemize}
For each $\varepsilon_\alpha$, fix a corresponding $\delta_\alpha$ as given by condition $(2)$, then set
$$
U=\{x\in X:x(\alpha)\in\Ball_\alpha(x_\alpha,\delta_\alpha)\text{ for every }\alpha\in F\}.
$$
If $\kappa$ is infinite, then $U$ is infinite by the assumption that each $|X_\alpha|\geq 2$. On the other hand, if $\kappa$ is finite, then $U$ is infinite by the assumption that $\kappa\geq 1$ and each $X_\alpha$ is crowded. In either case, since $E$ is dense in $X$, it is possible to find $e\in \big(U\setminus\ran(\sigma)\big)\cap E$. Now, using condition $(2)$, fix $h_\alpha\in\HH_\alpha$ for $\alpha\in F$ such that each $h_\alpha(x_\alpha)=e(\alpha)$ and each $\supp(h_\alpha)\subseteq\Ball_\alpha(x_\alpha,\delta_\alpha)$. Set $\zeta'(\alpha)=\zeta(\alpha)^\frown (h_\alpha)$ for $\alpha\in F$. Since conditions $(15)$ and $(16)$ ensure that conditions $(5)$ and $(6)$ will be satisfied, each $\zeta'(\alpha)$ is an $\alpha$-suitable tuple. Finally, set $q=\big(F,\zeta',\sigma\cup\{(d,e)\}\big)$. It is straightforward to check that $q\in D^\dom_d$ and $q\leq p$. $\blacksquare$

\noindent\textbf{Claim 3.} Each $D^\ran_e$ is a dense subset of $\PPP$.

\noindent\textit{Proof.} Since this is analogous to the proof of Claim 2, we will only point out the most significant differences. Begin by setting $x_\alpha=e(\alpha)$. Using this notation, conditions $(15)$, $(16)$ and $(17)$ remain unchanged. Once the suitable $\varepsilon_\alpha$ and $\delta_\alpha$ are found, define
$$
U=\big\{x\in X:x(\alpha)\in H^{-1}_{\zeta(\alpha)}\big[\Ball_\alpha(x_\alpha,\delta_\alpha)\big]\text{ for every }\alpha\in F\big\}.
$$
Finally, pick $d\in\big(U\setminus\dom(\sigma)\big)\cap D$, then use condition $(3)$ to obtain $h_\alpha\in\HH_\alpha$ for $\alpha\in F$ so that each $h_\alpha\Big(H_{\zeta(\alpha)}\big(d(\alpha)\big)\Big)=x_\alpha$ and $\supp(h_\alpha)\subseteq\Ball_\alpha(x_\alpha,\delta_\alpha)$. $\blacksquare$

\noindent\textbf{Claim 4.} $\PPP$ is $\sigma\text{-}\mathrm{centered}$.

\noindent\textit{Proof.} By \cite[Exercise III.2.12]{kunen}, we can fix an \emph{independent family of functions} of size $\kappa$. That is, we can fix functions $f_\alpha:\omega\longrightarrow\omega$ for $\alpha<\kappa$ such that
$$
\{k\in\omega:f_{\alpha_i}(k)=j_i\text{ for every }i\leq n\}\text{ is infinite}
$$
whenever $n\in\omega$, $\alpha_0<\cdots <\alpha_n<\kappa$ and $j_0,\ldots ,j_n\in\omega$. Given $\alpha<\kappa$, fix an enumeration $\{s_{\alpha,j}:j\in\omega\}$ of all $\alpha$-suitable tuples. Given $k\in\omega$ and a finite bijection $\sigma$ such that $\dom(\sigma)\subseteq D$ and $\ran(\sigma)\subseteq E$, define
$$
\PPP(k,\sigma)=\{p\in\PPP:\sigma^p=\sigma\text{ and }\zeta^p(\alpha)=s_{\alpha,f_\alpha(k)}\text{ for every }\alpha\in F^p\}.
$$
It is clear that each $\PPP(k,\sigma)$ is a centered subset of $\PPP$. To see that the union of these sets is $\PPP$, pick $p=(F,\zeta,\sigma)\in\PPP$. Let $\alpha_0<\cdots <\alpha_n$ be such that $F=\{\alpha_0,\ldots,\alpha_n\}$, and let $j_0,\ldots ,j_n\in\omega$ be such that $\zeta(\alpha_i)=s_{\alpha_i,j_i}$ for every $i\leq n$. It is easy to realize that $p\in\PPP(\sigma,k)$ whenever $k$ is such that $f_{\alpha_i}(k)=j_i$ for every $i\leq n$. $\blacksquare$

Observe that the collection
$$
\DD=\{D^\coord_\alpha:\alpha\in\kappa\}\cup\{D^\dom_d:d\in D\}\cup\{D^\ran_e:e\in E\}
$$
consists of dense subsets of $\PPP$ by Claims $1$-$3$, and that $|\DD|<\pppp$ because $\kappa<\pppp$. Therefore, since $\PPP$ is $\sigma$-centered by Claim $4$, it is possible to apply Bell's Theorem\footnote{\,This result was essentially obtained in \cite{bell}. However, the modern statement given in \cite{kunen} is the one that is needed here.} (see \cite[Theorem III.3.61]{kunen}), which guarantees the existence of a filter $G$ on $\PPP$ that intersects every element of $\DD$. Given $\alpha\in\kappa$, set
$$
S_\alpha =\bigcup\{\zeta^p(\alpha):p\in G\},
$$
and observe that each $S_\alpha$ is either an $\alpha$-suitable tuple or an $\alpha$-suitable sequence. In either case, set $h_\alpha=H_{S_\alpha}$. Finally, set $h=\prod_{\alpha\in\kappa}h_\alpha$, and observe that $h\in\HH(X)$. The verification that $h[D]=E$ is straightforward.
\end{proof}

As a first consequence of our main result, we will show that it is ``very easy'' for an infinite product of zero-dimensional Polish spaces to be $\CDH$. Unfortunately, as Proposition \ref{proposition_optimality_compactness} will show, the assumption concerning compactness is unavoidable. Recall that a zero-dimensional space $X$ is \emph{nowhere compact} if every non-empty open subset of $X$ is non-compact.

\begin{corollary}\label{corollary_main_zero_dimensional}
Let $\kappa<\pppp$ be an infinite cardinal, and let $X_\alpha$ for $\alpha\in\kappa$ be zero-dimensional Polish spaces such that each $|X_\alpha|\geq 2$. Assume that each $X_\alpha$ is either compact or nowhere compact. Then $\prod_{\alpha\in\kappa}X_\alpha$ is $\CDH$.
\end{corollary}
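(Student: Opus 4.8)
The plan is to apply Theorem \ref{theorem_main}, but \emph{not} to the given factorization: the individual factors $X_\alpha$ need not be homogeneous (for instance the convergent sequence $\omega+1$ is a compact zero-dimensional Polish space with $|\omega+1|\geq 2$ that is not homogeneous), so the coordinate-wise hypotheses of Theorem \ref{theorem_main} can fail. The remedy is to regroup. Since $\kappa$ is infinite, fix a partition $\kappa=\bigsqcup_{\xi\in\Lambda}I_\xi$ into infinitely many pairwise disjoint countably infinite blocks, so that $\Lambda$ is infinite and $|\Lambda|\leq\kappa<\pppp$. Set $Y_\xi=\prod_{\alpha\in I_\xi}X_\alpha$ and $Y=\prod_{\xi\in\Lambda}Y_\xi$, and note that $Y$ is naturally homeomorphic to $X$ by reassociating the product. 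I would first show that each $Y_\xi$ is homeomorphic either to $2^\omega$ or to $\omega^\omega$. Each $Y_\xi$ is a countably infinite product of zero-dimensional Polish spaces, hence is itself zero-dimensional, Polish, and crowded (the latter because every factor has at least two points). If every $X_\alpha$ with $\alpha\in I_\xi$ is compact, then $Y_\xi$ is a nonempty compact perfect zero-dimensional metrizable space, so $Y_\xi\cong 2^\omega$ by Brouwer's characterization of the Cantor set \cite{van_mill_book}. Otherwise some factor is nowhere compact; a short point-set argument (take a basic clopen box inside a hypothetical nonempty compact open subset of $Y_\xi$ and project onto such a factor, contradicting its nowhere-compactness) shows that $Y_\xi$ is itself nowhere compact, whence $Y_\xi\cong\omega^\omega$ by the Alexandrov--Urysohn characterization of the irrationals \cite{van_mill_book}.

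Granting this reduction, the first two hypotheses of Theorem \ref{theorem_main} for the factorization $Y=\prod_{\xi\in\Lambda}Y_\xi$ are immediate: both $2^\omega$ and $\omega^\omega$ are homogeneous zero-dimensional spaces, so by Proposition \ref{proposition_zero_dimensional} each $Y_\xi$ is strongly locally homogeneous and strongly $n$-homogeneous for every $n\in\omega$.

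The remaining and most delicate hypothesis is that $Y$ has the $\GPP$, and here I would invoke Theorem \ref{theorem_gpp_groups}, exploiting the dichotomy just established. Both $2^\omega$ and $\omega^\omega$ carry topological group structures (for instance $(\mathbb{Z}/2\mathbb{Z})^\omega$ and $\mathbb{Z}^\omega$), and each embeds as a subspace into the other; hence, fixing a distinguished index $0\in\Lambda$, I may regard every $Y_\xi$ with $\xi\neq 0$ as a topological group and realize $Y_0$ as a subspace of each such $Y_\xi$. It then remains to verify the second hypothesis of Theorem \ref{theorem_gpp_groups}, namely that $Y_\Omega$ has the $\GPP$ for every $\Omega\in[\Lambda]^\omega$. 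But for countable $\Omega$ the space $Y_\Omega$ is again a countable product of copies of $2^\omega$ and $\omega^\omega$, hence, by the same dichotomy, homeomorphic to $2^\omega$ or to $\omega^\omega$; in either case it is a strongly locally homogeneous Polish space, so it is $\CDH$ by Theorem \ref{theorem_fundamental}, and therefore has the $\GPP$ by Proposition \ref{proposition_cdh_implies_gpp} (its factors being crowded with countable $\pi$-base). Thus Theorem \ref{theorem_gpp_groups} yields that $Y$ has the $\GPP$.

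With all three hypotheses in hand, Theorem \ref{theorem_main} gives that $Y$ is $\CDH$, and since $X\cong Y$ and countable dense homogeneity is a topological property, $X$ is $\CDH$ as well. The main obstacle is conceptual rather than computational: one must recognize that the given coordinates cannot be used directly (homogeneity of the factors may fail) and that passing to countably infinite blocks simultaneously restores homogeneity and, through the compact/nowhere-compact dichotomy feeding into the topological-group machinery of Section \ref{section_groups}, delivers the $\GPP$. The one genuinely technical step is checking that a \emph{mixed} block containing a nowhere-compact factor is itself nowhere compact, which is where the explicit point-set argument described above is needed.
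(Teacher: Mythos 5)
Your proposal is correct and takes essentially the same approach as the paper: regroup into countably infinite blocks, identify each block as $2^\omega$ or $\omega^\omega$ via the classical characterizations, obtain the $\GPP$ from Theorem \ref{theorem_gpp_groups} together with Proposition \ref{proposition_cdh_implies_gpp}, and conclude with Proposition \ref{proposition_zero_dimensional} and Theorem \ref{theorem_main}. The paper's proof is merely terser, leaving implicit the details you spell out (notably that a mixed block containing a nowhere compact factor is itself nowhere compact).
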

\begin{proof}
By considering countably infinite subproducts, we can assume without loss of generality that each $X_\alpha$ is crowded. By the classical characterizations of $2^\omega$ and $\omega^\omega$ (see for example \cite[Theorems 1.5.5 and 1.9.8]{van_mill_book}), it follows that each $X_\alpha$ is homeomorphic to one of these two spaces. Using this observation together with Proposition \ref{proposition_cdh_implies_gpp}, it is easy to realize that the requirements of Theorem \ref{theorem_gpp_groups} will be satisfied, hence $\prod_{\alpha\in\kappa}X_\alpha$ has $\GPP$. To conclude the proof, simply apply Proposition \ref{proposition_zero_dimensional} and Theorem \ref{theorem_main}.
\end{proof}

\begin{corollary}[Stepr\={a}ns, Zhou]\label{corollary_main_cantor}
Let $\kappa<\pppp$ be a cardinal. Then $2^\kappa$ is $\CDH$.
\end{corollary}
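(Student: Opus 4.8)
The plan is to reduce immediately to the infinite case and then invoke Corollary \ref{corollary_main_zero_dimensional}. First I would dispose of the case where $\kappa$ is finite: in that case $2^\kappa$ is a finite discrete space, so every point is isolated and its only dense subset is the whole space. Consequently there is exactly one countable dense subset, and the identity homeomorphism trivially witnesses countable dense homogeneity. Since $\pppp\geq\aleph_1$ in $\ZFC$, the only remaining possibility is that $\kappa$ is infinite.

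For infinite $\kappa$, I would set $X_\alpha=2$ for every $\alpha\in\kappa$ and simply check that these factors satisfy the hypotheses of Corollary \ref{corollary_main_zero_dimensional}. Each $X_\alpha$ is a two-point discrete space, hence zero-dimensional (it is non-empty, $\mathsf{T}_1$, and the singletons form a clopen base), Polish (being finite, it is separable and admits a complete admissible metric), and of cardinality $2\geq 2$. Moreover each $X_\alpha$ is compact, being finite. Thus all the requirements of the corollary are met with $\kappa<\pppp$ infinite, and it yields directly that $2^\kappa=\prod_{\alpha\in\kappa}X_\alpha$ is $\CDH$.

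I expect no genuine obstacle here, since the entire content has already been extracted into Theorem \ref{theorem_main} and Corollary \ref{corollary_main_zero_dimensional}. The only points requiring minimal care are the bookkeeping in the finite case and the observation (internal to the proof of the corollary) that grouping the factors into countably infinite blocks replaces each $2$ by the Cantor set $2^\omega$, so that the crowdedness reduction used there applies; as $\kappa\cdot\omega=\kappa$ for infinite $\kappa$, this regrouping is harmless and $2^\kappa\cong(2^\omega)^\kappa$. Hence the corollary applies verbatim and the result follows.
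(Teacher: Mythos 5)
Your proposal is correct and matches the paper's intent exactly: Corollary \ref{corollary_main_cantor} is stated without proof precisely because it is the instance of Corollary \ref{corollary_main_zero_dimensional} with every factor $X_\alpha=2$, which is zero-dimensional, Polish, compact, and of size $\geq 2$, with the finite case being trivial. Your extra remark about regrouping into countably infinite blocks is sound but unnecessary, since that reduction is internal to the proof of Corollary \ref{corollary_main_zero_dimensional} and need not be re-verified when applying it.
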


\begin{corollary}\label{corollary_main_baire}
Let $\kappa<\pppp$ be a cardinal. Then $\omega^\kappa$ is $\CDH$.
\end{corollary}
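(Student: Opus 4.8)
The plan is to deduce this from Corollary \ref{corollary_main_zero_dimensional} after a regrouping of the coordinates. The one subtlety is that the factor $\omega$ itself is neither compact nor nowhere compact, since its singletons are open and compact; thus Corollary \ref{corollary_main_zero_dimensional} cannot be applied to the factorization $\omega^\kappa=\prod_{\alpha\in\kappa}\omega$ as it stands. The trick will be to bundle the coordinates into countably infinite blocks, so that each block becomes a copy of the Baire space $\omega^\omega$, which \emph{is} nowhere compact.

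First I would dispose of the finite case. If $\kappa=0$ then $\omega^\kappa$ is a one-point space, and if $1\leq\kappa<\omega$ then $\omega^\kappa$ is a countably infinite discrete space; in either case every point is isolated, so the whole space is its unique countable dense subset, and $\omega^\kappa$ is trivially $\CDH$.

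Now assume that $\kappa$ is infinite. Since $\kappa\cdot\omega=\kappa$, I would fix a partition of $\kappa$ into pairwise disjoint sets $\Omega_\beta\in[\kappa]^\omega$ for $\beta\in\kappa$. By the associativity of products,
$$
\omega^\kappa\cong\prod_{\beta\in\kappa}\omega^{\Omega_\beta}\cong(\omega^\omega)^\kappa.
$$
It then remains to observe that the single factor $\omega^\omega$ satisfies the hypotheses of Corollary \ref{corollary_main_zero_dimensional}: it is a zero-dimensional Polish space with $|\omega^\omega|=\cccc\geq 2$, and it is nowhere compact, because every non-empty open subset contains a basic clopen set of the form $\{x\in\omega^\omega:s\subseteq x\}$ for some finite sequence $s$, each of which is homeomorphic to $\omega^\omega$ and hence non-compact. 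Since $\kappa<\pppp$, Corollary \ref{corollary_main_zero_dimensional} applies and yields that $(\omega^\omega)^\kappa$, and therefore $\omega^\kappa$, is $\CDH$. As all the substantive work has already been absorbed into Corollary \ref{corollary_main_zero_dimensional}, I do not anticipate any genuine obstacle here; the only point requiring care is exactly the replacement of the trivially-behaved factor $\omega$ by the nowhere compact $\omega^\omega$ via the regrouping above.
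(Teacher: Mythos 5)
Your proof is correct and follows essentially the route the paper intends: the corollary is stated as an immediate consequence of Corollary \ref{corollary_main_zero_dimensional}, whose own proof begins with exactly the regrouping into countably infinite subproducts that you make explicit, so that $\omega^\kappa\cong(\omega^\omega)^\kappa$ with each factor nowhere compact. Your observation that the factor $\omega$ itself is neither compact nor nowhere compact, so that this regrouping is genuinely needed before Corollary \ref{corollary_main_zero_dimensional} applies, is a correct clarification of a point the paper leaves implicit.
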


Finally, we will apply the results of Section \ref{section_manifolds} to show that ``$\leq\omega$'' can be substituted by ``$<\pppp$'' in the statement of Theorem \ref{theorem_yang}.

\begin{corollary}\label{corollary_main_manifolds}
Let $\kappa<\pppp$ be a cardinal, and let $X_\alpha$ for $\alpha\in\kappa$ be manifolds with boundary. Assume that each $X_\alpha$ is connected and that condition $(\partial)$ holds. Then $\prod_{\alpha\in\kappa}X_\alpha$ is $\CDH$.
\end{corollary}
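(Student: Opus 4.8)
The plan is to deduce the statement from our main result, Theorem \ref{theorem_main}, applied not to the given factors but to a suitable regrouping of them into countably infinite blocks. First I would dispose of the trivial cases: a connected manifold with boundary of dimension $0$ is a single point, so such factors may be discarded without changing the homeomorphism type of the product nor the validity of condition $(\partial)$; thus assume each $\dime(X_\alpha)\geq 1$. If only countably many factors survive, the conclusion is exactly Theorem \ref{theorem_yang}, so assume $\kappa$ is uncountable (and still $<\pppp$). A regrouping is unavoidable here, because the individual factors need not satisfy the hypotheses of Theorem \ref{theorem_main}: a circle fails to be strongly $n$-homogeneous for large $n$, and any factor with non-empty boundary is not even homogeneous, hence not strongly locally homogeneous. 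The point of passing to infinite blocks is that these defects are ``healed in the limit,'' in the spirit of the homogeneity of the Hilbert cube.

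Concretely, I would partition $\kappa=\bigcup_{\beta\in\kappa}\Omega_\beta$ into pairwise disjoint sets $\Omega_\beta\in[\kappa]^\omega$ chosen so that each block $Y_\beta:=X_{\Omega_\beta}$ satisfies condition $(\partial)$: when all boundaries are empty this is automatic, and otherwise I would herd all the (infinitely many) boundary factors into blocks that each receive infinitely many of them, leaving every remaining block with empty boundaries throughout. Each $Y_\beta$ is then a countably infinite product of connected manifolds satisfying $(\partial)$, hence Polish and $\CDH$ by Theorem \ref{theorem_yang}, and crowded with a countable $\pi$-base.

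The main obstacle is to verify that each block $Y_\beta$ is strongly locally homogeneous and strongly $n$-homogeneous for every $n$ (recall that being $\CDH$ implies neither property). For strong $n$-homogeneity I would first use the $\GPP$ of $Y_\beta$ together with Lemma \ref{lemma_gpp_manifolds_auxiliary} to move a given finite configuration into general position and off the pseudoboundary; I would then group the countably many coordinates of $Y_\beta$ into pairs, so that each grouped factor is a connected manifold of dimension $\geq 2$ on whose interior any permutation of finitely many interior points is realized by a compactly supported homeomorphism, extended by the identity across the boundary and over the remaining coordinates. Strong local homogeneity of $Y_\beta$ I would obtain from the productivity of this property in the infinite-dimensional setting (again the Hilbert-cube phenomenon). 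This infinite-dimensional-topology input is the genuinely delicate part of the argument.

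It remains to check that the coarse product $X\cong\prod_{\beta\in\kappa}Y_\beta$ has the $\GPP$; for this I would apply Lemma \ref{lemma_gpp} with $\Omega^\ast=\varnothing$. Its second hypothesis holds because any countable subproduct $\prod_{\beta\in\Omega}Y_\beta$ is again a countable product of connected manifolds satisfying $(\partial)$, hence $\CDH$ by Theorem \ref{theorem_yang} and so has the $\GPP$ by Proposition \ref{proposition_cdh_implies_gpp}. For the $\WGPP$ I would invoke Lemma \ref{lemma_wgpp}: given a countable $D$, Lemma \ref{lemma_gpp_manifolds_auxiliary} lets me assume that $\pi_0\re D$ is injective (in a fixed fine coordinate $0\in\Omega_0$, hence also as a map into $Y_0$) and that $D$ misses the pseudoboundary; then for each $\beta\neq 0$ I would build a convenient pair for $(Y_\beta,Y_0)$ by applying Lemma \ref{lemma_global_convenient_pair} to the coordinate $X_0$ of $Y_0$ against a finite sub-product of $Y_\beta$ of strictly larger dimension, extending it by the identity on all other coordinates; the focusing condition survives precisely because $\pi_0\re D$ is injective. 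With the three hypotheses of Theorem \ref{theorem_main} in hand, the conclusion follows, the crux throughout being the homogeneity analysis of the infinite blocks described above.
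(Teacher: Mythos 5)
Your overall architecture is the paper's: reduce to uncountable $\kappa<\pppp$ via Theorem \ref{theorem_yang}, regroup the factors into countably infinite blocks $Y_\beta=X_{\Omega_\beta}$ each satisfying $(\partial)$, verify the three hypotheses of Theorem \ref{theorem_main} for the coarse product, and conclude. Your treatment of the $\GPP$ hypothesis is essentially sound: it amounts to re-running the proof of Theorem \ref{theorem_gpp_manifolds} at the block level (Lemma \ref{lemma_gpp_manifolds_auxiliary}, then Lemma \ref{lemma_global_convenient_pair} applied to a finite subproduct of $Y_\beta$ of dimension exceeding $\dime(X_0)$, then Lemmas \ref{lemma_wgpp} and \ref{lemma_gpp}), whereas the paper obtains the $\GPP$ of the fine product from Theorem \ref{theorem_gpp_manifolds} after grouping into finite subproducts and then observes that general position with respect to the fine coordinates is inherited by the blocks; these are interchangeable packagings of the same machinery, and your observation that the focusing condition survives because of injectivity in a single fine coordinate is correct.

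The genuine gap is the homogeneity of the blocks. Strong local homogeneity of $Y_\beta$ is not a ``productivity'' phenomenon: products of strongly locally homogeneous factors are strongly locally homogeneous only under additional hypotheses (for infinite products one needs at least homogeneity of the factors), and here the factors with non-empty boundary are neither homogeneous nor strongly locally homogeneous, so no coordinate-wise argument can even move a point off the pseudoboundary. What you need is precisely Yang's theorem that a countably infinite product of connected manifolds with boundary satisfying $(\partial)$ is strongly locally homogeneous, i.e.\ \cite[Theorem 3 and Remark 2]{yang}, which the paper simply cites; nothing proved in this paper yields it, and reproducing it means redoing the Fort--Yang boundary-absorption construction. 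Likewise, strong $n$-homogeneity of the blocks is \cite[Theorem 2 and Remark 2]{yang}. Your from-scratch sketch of the latter also has a flaw in its first step: composing a homeomorphism witnessing the $\GPP$ with the one from Lemma \ref{lemma_gpp_manifolds_auxiliary} (in either order) does not produce a configuration that is simultaneously in general position and off the pseudoboundary, since the second homeomorphism applied can destroy the property arranged by the first. The repair is to extend the finite configuration to a countable dense set and map it, via Theorem \ref{theorem_yang}, onto a countable dense set chosen by the second part of Lemma \ref{lemma_exists_gp} to be in general position and disjoint from the (meager) pseudoboundary; with that, your pairing argument can be made to work. But for strong local homogeneity you must either cite Yang, as the paper does, or supply his proof: as written, your proposal assumes exactly the hardest ingredient.
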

\begin{proof}
Set $X=\prod_{\alpha\in\kappa}X_\alpha$. By Theorem \ref{theorem_yang}, we can assume that $\kappa$ is uncountable. Clearly, we can also assume that each $|X_\alpha|\geq 2$. By connectedness, it follows that each $\dime(X_\alpha)\geq 1$. Therefore, after taking suitable finite subproducts, the requirements of Theorem \ref{theorem_gpp_manifolds} will be satisfied. So we can assume without loss of generality that $X$ has the $\GPP$. As in the proof of Lemma \ref{lemma_gpp}, it is easy to obtain $\Omega_\alpha\in [\kappa]^\omega$ for $\alpha\in\kappa$ such that the following conditions hold:
\begin{itemize}
\item $\Omega_\alpha\cap\Omega_\beta=\varnothing$ whenever $\alpha\neq\beta$,
\item $\bigcup_{\alpha\in\kappa}\Omega_\alpha=\kappa$,
\item Each $X_{\Omega_\alpha}$ satisfies condition $(\partial)$.
\end{itemize}
Noitce that each $X_{\Omega_\alpha}$ is strongly locally homogeneous by \cite[Theorem 3 and Remark 2]{yang}, and strongly $n$-homogeneous for every $n\in\omega$ by \cite[Theorem 2 and Remark 2]{yang}.\footnote{\,The observation made in Footnote 1 also holds for these properties.} Finally, using the fact that $X$ has the $\GPP$, one sees that $\prod_{\alpha\in\kappa}X_{\Omega_\alpha}$ will have the $\GPP$ as well. To conclude the proof, apply Theorem \ref{theorem_main}.
\end{proof}

\begin{corollary}[Stepr\={a}ns, Zhou]\label{corollary_main_reals}
Let $\kappa<\pppp$ be a cardinal. Then $\RRR^\kappa$ is $\CDH$.
\end{corollary}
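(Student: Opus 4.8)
The plan is to read off $\RRR^\kappa$ as an instance of Corollary \ref{corollary_main_manifolds}. The real line $\RRR$ is a non-empty connected $1$-manifold whose boundary $\partial\RRR$ is empty, hence a connected manifold with boundary in the sense adopted here. Setting $X_\alpha=\RRR$ for every $\alpha\in\kappa$, condition $(\partial)$ holds in its trivial form, since $\partial X_\alpha=\varnothing$ for all $\alpha$. As $\kappa<\pppp$ by hypothesis, all the assumptions of Corollary \ref{corollary_main_manifolds} are satisfied, and it delivers at once that $\RRR^\kappa=\prod_{\alpha\in\kappa}\RRR$ is $\CDH$.

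The point worth flagging is why one must route through Corollary \ref{corollary_main_manifolds} rather than feed the factors $\RRR$ directly into Theorem \ref{theorem_main}. The obstruction is that $\RRR$ is \emph{not} strongly $n$-homogeneous for $n\geq 3$: every self-homeomorphism of the line is monotone, so a bijection of a three-point set that is neither increasing nor decreasing admits no extension. The resolution, carried out inside the proof of Corollary \ref{corollary_main_manifolds}, is to partition $\kappa$ into countably infinite blocks $\Omega_\alpha$ and regard $\RRR^\kappa$ as $\prod_{\alpha}\RRR^{\Omega_\alpha}$; each block $\RRR^\omega$ is strongly locally homogeneous and strongly $n$-homogeneous for every $n\in\omega$ by Yang's theorems on countable products of connected manifolds, which restores the homogeneity hypotheses of Theorem \ref{theorem_main}. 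The only genuinely substantive input that remains is the $\GPP$ of $\RRR^\kappa$, furnished by Theorem \ref{theorem_gpp_manifolds}; alternatively, since $\RRR$ is a Polish topological group, one could obtain the $\GPP$ through Theorem \ref{theorem_gpp_groups} instead. Either way, all of this labour is already encapsulated in Corollary \ref{corollary_main_manifolds}, so the proof amounts to verifying its hypotheses.
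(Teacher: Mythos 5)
Your proof is correct and is exactly the paper's intended derivation: Corollary \ref{corollary_main_reals} is stated as an immediate instance of Corollary \ref{corollary_main_manifolds}, taking every $X_\alpha=\RRR$ and noting that condition $(\partial)$ holds vacuously since all boundaries $\partial X_\alpha$ are empty. Your flagged remark --- that $\RRR$ itself is not strongly $n$-homogeneous for $n\geq 3$ (self-homeomorphisms of the line are monotone), so the factors cannot be fed directly into Theorem \ref{theorem_main} and must instead be grouped into countably infinite blocks $\RRR^{\Omega_\alpha}$ --- is also accurate and correctly describes what the proof of Corollary \ref{corollary_main_manifolds} does internally.
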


\begin{corollary}[Stepr\={a}ns, Zhou]\label{corollary_main_hilbert}
Let $\kappa<\pppp$ be an infinite cardinal. Then $[0,1]^\kappa$ is $\CDH$.
\end{corollary}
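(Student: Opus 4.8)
The plan is to deduce this immediately from Corollary \ref{corollary_main_manifolds} by taking each factor to be the unit interval. First I would observe that $[0,1]$ is a connected $1$-manifold with boundary, with $\partial[0,1]=\{0,1\}$. Setting $X_\alpha=[0,1]$ for every $\alpha\in\kappa$, the product $\prod_{\alpha\in\kappa}X_\alpha$ is precisely $[0,1]^\kappa$, and each factor is connected, as required by the corollary.

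Next I would verify condition $(\partial)$. Since $\partial X_\alpha=\{0,1\}\neq\varnothing$ for every $\alpha\in\kappa$ and $\kappa$ is infinite, there are infinitely many values of $\alpha$ for which $\partial X_\alpha\neq\varnothing$; hence the first alternative in $(\partial)$ is satisfied. Together with the standing hypothesis $\kappa<\pppp$, all the hypotheses of Corollary \ref{corollary_main_manifolds} are now met, and that corollary yields directly that $[0,1]^\kappa$ is $\CDH$.

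The only point worth emphasizing is that the infinitude of $\kappa$ is used \emph{exactly} here: it is what guarantees that condition $(\partial)$ holds, since every factor has non-empty boundary. For finite $\kappa$ this route would fail, as $(\partial)$ would then be violated; but in that case $[0,1]^\kappa$ is a compact connected manifold with boundary whose countable dense homogeneity already follows from the classical theory (Theorem \ref{theorem_fundamental}, or Fort's theorem). Consequently there is no genuine obstacle in the infinite case treated here: the entire content lies in recognizing that $[0,1]$ fits the manifold-with-boundary framework and that, for infinite $\kappa$, condition $(\partial)$ reduces to the mere infinitude of the index set.
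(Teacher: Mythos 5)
Your main derivation is correct and is exactly the paper's intended route: the paper states this corollary immediately after Corollary \ref{corollary_main_manifolds} precisely because, taking $X_\alpha=[0,1]$ for all $\alpha\in\kappa$, each factor is a connected $1$-manifold with boundary, and the infinitude of $\kappa$ together with $\partial[0,1]=\{0,1\}\neq\varnothing$ gives the first alternative of condition $(\partial)$, so that corollary applies verbatim.

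However, your closing remark about finite $\kappa$ is false and should be deleted. For $1\leq n<\omega$ the cube $[0,1]^n$ is \emph{not} $\CDH$: every $h\in\HH([0,1]^n)$ preserves the manifold boundary $\partial([0,1]^n)$ (by invariance of domain), so a countable dense subset contained in the open cube $(0,1)^n$ can never be mapped onto a countable dense subset that meets the boundary. In particular $[0,1]^n$ is not strongly locally homogeneous, so Theorem \ref{theorem_fundamental} does not apply, and Fort's theorem concerns the Hilbert cube $[0,1]^\omega$, not finite powers. Indeed, this failure is exactly the point of condition $(\partial)$ and of the paper's own Theorem \ref{theorem_optimality_boundaries}: when finitely many (but not zero) of the factors have non-empty boundary, the product is not even homogeneous, let alone $\CDH$. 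Since the statement at hand concerns only infinite $\kappa$, this error is confined to an aside and does not affect your proof of the corollary itself.
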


\section{Optimality}\label{section_optimality}

In this section, we will show that our main result and its corollaries are sharp. We begin by showing that the cardinal $\pppp$ in the statement of Theorem \ref{theorem_main} is as big as possible. The following result was first proved by Hru\v{s}\'{a}k and Zamora Avil\'{e}s \cite[Theorem 3.3]{hrusak_zamora_aviles} in the case where $\kappa=\pppp$ and each $X_\alpha=2$, and the argument is essentially the same in the general case.

Throughout this section, we will freely apply the Hewitt-Marczewski-Pondiczery theorem mentioned in the introduction. We will also need some preliminary definitions. A subset $D$ of a space $X$ is \emph{sequentially dense} if for every $x\in X$ there exists a sequence of elements of $D$ that converges to $x$. A space $X$ is \emph{strongly sequentially separable} if every countable dense subset of $X$ is sequentially dense in $X$. A space $X$ is \emph{sequentially crowded} if $X$ is non-empty and for every $x\in X$ there exists a sequence of elements of $X\setminus\{x\}$ that converges to $x$.

\begin{theorem}\label{theorem_optimality_p}
Let $\kappa$ be a cardinal such that $\pppp\leq\kappa\leq\cccc$, and let $X_\alpha$ for $\alpha\in\kappa$ be separable metrizable spaces such that each $|X_\alpha|\geq 2$. Then $\prod_{\alpha\in\kappa}X_\alpha$ is separable but not $\CDH$.
\end{theorem}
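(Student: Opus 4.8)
The plan is to verify separability directly and to refute $\CDH$ by exhibiting two countable dense subsets that no self‑homeomorphism can interchange. Separability is immediate: since $\kappa\leq\cccc$, the Hewitt--Marczewski--Pondiczery theorem furnishes a countable dense subset of $X:=\prod_{\alpha\in\kappa}X_\alpha$. For the failure of $\CDH$, the guiding principle is that convergence in a product is coordinatewise, so the sequential behaviour of a countable set $A=\{a_n:n\in\omega\}\subseteq X$ is governed by combinatorics on $\omega$: after fixing in each factor a point and a sufficiently small ball, a subsequence $(a_n)_{n\in M}$ converges to a prescribed target if and only if the infinite set $M$ is a common pseudointersection of a $\kappa$‑indexed family of subsets of $\omega$ attached to $A$. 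Crucially, the property ``$A$ has a nontrivial convergent sequence'' (more precisely, the sequential closure of $A$) is preserved by any homeomorphism carrying one countable dense set onto another, since such maps are continuous both ways.

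First I would isolate the combinatorial core as a lemma: for every $\kappa$ with $\pppp\leq\kappa\leq\cccc$ there is an independent family $\{c_\alpha:\alpha\in\kappa\}$ on $\omega$ with no infinite pseudointersection. To build it, fix on one copy of $\omega$ a free filter base $\{a_\alpha:\alpha\in\kappa\}$ with the strong finite intersection property and no infinite pseudointersection (its existence in size $\geq\pppp$ is exactly the content of $\kappa\geq\pppp$, then padded by cofinite sets up to size $\kappa$), and on a second copy of $\omega$ an independent family $\{d_\alpha:\alpha\in\kappa\}$ (available since $\kappa\leq\cccc$). Working on $\omega\times\omega$, set $c_\alpha=a_\alpha\times d_\alpha$. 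A short check shows every Boolean combination of the $c_\alpha$ is infinite---using the strong finite intersection property of the $a_\alpha$ for the positive part, and the independence of the $d_\alpha$ to meet the negative constraints in the second coordinate---so $\{c_\alpha\}$ is independent; and any infinite $M$ with $M\subseteq^\ast c_\alpha$ for all $\alpha$ forces a suitable first‑coordinate projection of $M$ to be an infinite pseudointersection of $\{a_\alpha\}$, which is impossible. Freeness of the filter is what handles the degenerate case in which $M$ sits on finitely many columns.

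Next I would use this family to manufacture the two dense sets. Fix in each factor a point $0_\alpha$ and another point at distance $3\varepsilon_\alpha>0$ from it, and realize the index set of the dense set as $\omega\times\omega$. Define $E=\{e_n\}$ by setting $e_n(\alpha)=0_\alpha$ when $n\in c_\alpha$ and letting the remaining values $\{e_n(\alpha):n\notin c_\alpha\}$ run through a dense subset of $X_\alpha$. Since $\{c_\alpha^c\}$ is again independent, the indices lying simultaneously outside any finitely many $c_\alpha$ are plentiful, which gives density of $E$; and for the target $x^\ast=(0_\alpha)_\alpha$ one verifies that $(e_n)_{n\in M}\to x^\ast$ forces $M\subseteq^\ast c_\alpha$ for every $\alpha$, so $x^\ast$ is \emph{not} a sequential limit of $E$. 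This argument uses metric balls rather than clopen pieces, so it applies even when $X_\alpha$ is connected and admits no retraction onto a two‑point set. For the companion set $D$ I would exploit that $X$ is sequentially crowded to produce a countable dense set carrying a prescribed nontrivial convergent sequence.

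The main obstacle is turning these sets into a genuine contradiction with $\CDH$: a hypothetical homeomorphism $h$ with $h[D]=E$ scrambles points, so one cannot simply assert that the bad target $x^\ast$ is hit. The resolution is to reason with a homeomorphism‑invariant sequential feature rather than with a single point---transporting a controlled convergent sequence of $D$ through $h$ yields a nontrivial convergent sequence in $E=h[D]$, and reading this sequence off the $\pppp$ coordinates encoded by $\{c_\alpha\}$ extracts an infinite pseudointersection of the witnessing family, contradicting $\kappa\geq\pppp$. Making this extraction independent of how $h$ permutes points---so that the resulting convergent sequence is forced onto coordinates governed by the no‑pseudointersection family---is the delicate step; it is precisely where the framework of strong sequential separability and sequential crowdedness is needed, and where the case $X_\alpha=2$, $\kappa=\pppp$ of Hru\v{s}\'{a}k and Zamora Avil\'{e}s is generalized.
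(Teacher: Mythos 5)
Your combinatorial lemma is correct, and it is a nice self-contained substitute for what the paper simply cites (the Gartside--Lo--Marsh theorem that such products are not strongly sequentially separable): the sets $c_\alpha=a_\alpha\times d_\alpha$ are indeed independent, have no infinite pseudointersection, and the free filter handles the degenerate column case. (A minor point: there are only countably many cofinite sets, so the ``padding up to size $\kappa$'' must use repetitions of the $a_\alpha$; this is harmless, since the $d_\alpha$ keep the $c_\alpha$ distinct and the verification never needs the $a_\alpha$ to be distinct.) The genuine gaps come afterwards. First, the blocking claim for $E$ fails as you state it. Density of $E$ in the product forces $\pi_\alpha[E]$ to be dense in $X_\alpha$, so when $0_\alpha$ is not isolated (e.g.\ every $X_\alpha=[0,1]$) the values $\{e_n(\alpha):n\notin c_\alpha\}$ must come arbitrarily close to $0_\alpha$; hence $e_n(\alpha)\to 0_\alpha$ along $M$ does \emph{not} force $M\subseteq^\ast c_\alpha$, and the extraction of a pseudointersection collapses. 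The $3\varepsilon_\alpha$ device you introduce is exactly what is needed, but you never use it: the repair is to make the values at $n\notin c_\alpha$ dense in $X_\alpha\setminus\Ball(0_\alpha,\varepsilon_\alpha)$ (so they stay at distance $\geq\varepsilon_\alpha$ from $0_\alpha$), and the values at $n\in c_\alpha$ dense in $\Ball(0_\alpha,\varepsilon_\alpha)$ rather than constantly $0_\alpha$; then $\{n:e_n(\alpha)\in\Ball(0_\alpha,\varepsilon_\alpha)\}\subseteq c_\alpha$ gives what you want. Relatedly, ``coordinatewise dense values plus plentiful indices'' does not yield density of $E$: with $\kappa$ uncountable one cannot enumerate all finite sets of coordinates, so the values must be assigned coherently, Hewitt--Marczewski--Pondiczery style (realize $\kappa\subseteq\RRR$ and let $e_n(\alpha)$ depend on $\alpha$ only through which member of a finite family of disjoint rational intervals contains $\alpha$), with your independence supplying indices in the required Boolean combinations.

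Second, the endgame that you leave as ``the delicate step'' is precisely what a proof must contain, and it is in fact not delicate, provided you stop trying to track where $h$ sends particular points. Either (i) build the companion set $D$ so that \emph{every} point of $D$ is the limit of a nontrivial sequence from $D$ (a length-$\omega$ recursion, using that $X$ is sequentially crowded); then if $h[D]=E\cup\{x^\ast\}$, the point $h^{-1}(x^\ast)$ is a nontrivial sequential limit of $D$, and applying $h$ exhibits $x^\ast$ as a nontrivial sequential limit of $E$, contradicting the blocking property. Or (ii), as the paper does, note that sequential crowdedness of a countable dense set is an intrinsic topological invariant, so $E\cup\{x^\ast\}$ (not sequentially crowded) and $D$ (sequentially crowded) are not even homeomorphic, and hence no ambient homeomorphism maps one onto the other. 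A single prescribed convergent sequence in $D$, which is what your sketch proposes, is not enough: you do not know which point of $D$ is sent to $x^\ast$. With these two repairs your argument goes through and becomes a self-contained version of the paper's proof, which instead quotes Gartside--Lo--Marsh for the bad dense set and Dobrowolski--Krupski--Marciszewski for sequential crowdedness, and then concludes exactly as in (ii).
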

\begin{proof}
Set $X=\prod_{\alpha\in\kappa}X_\alpha$. By \cite[Theorem 11]{gartside_lo_marsh},\footnote{\,As the remark that precedes it seems to confirm, we believe that ``separable triable spaces'' should be substituted by ``separable metrizable spaces'' in the statement of this result.} we can fix a countable dense subspace $D$ of $X$ that is not sequentially dense in $X$. This means that there exists $x\in X$ that is not the limit of any sequence in $D$. In particular, the space $D\cup\{x\}$ is not sequentially crowded. On the other hand, the space $X$ is sequentially crowded because each $|X_\alpha|\geq 2$ and $\kappa$ is infinite (see \cite[Lemma 4.3]{dobrowolski_krupski_marciszewski}). Therefore, in a recursion of length $\omega$, it is possible to construct a countable dense subspace $E$ of $X$ that is sequentially crowded. Since $D$ and $E$ are obviously non-homeomorphic, the proof is concluded.
\end{proof}

Next, we will show that the assumption regarding compactness in Corollary \ref{corollary_main_zero_dimensional} cannot be dropped.

\begin{proposition}\label{proposition_optimality_compactness}
There exists a zero-dimensional Polish space $X$ such that $X\times 2^\omega$ is not $\CDH$.
\end{proposition}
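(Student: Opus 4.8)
The plan is to separate two countable dense sets by a topological invariant that multiplication by the compact factor $2^\omega$ leaves intact, namely the set of points possessing a compact neighbourhood. For a space $Z$ write $L(Z)$ for the open set of points at which $Z$ is locally compact, and $N(Z)=Z\setminus L(Z)$. Since $2^\omega$ is compact, one checks that $L(X\times 2^\omega)=L(X)\times 2^\omega$ for the space $X$ below: a compact neighbourhood of $x$ times $2^\omega$ is a compact neighbourhood of $(x,c)$, and conversely the projection of a compact neighbourhood of $(x,c)$ is a compact neighbourhood of $x$. As $L(Z)$ is preserved by every homeomorphism, so is $N(Z)$. The whole argument then reduces to producing a zero-dimensional Polish $X$ with $L(X)$ dense but $N(X)\neq\varnothing$: in that case $L(Y)$ is dense and $N(Y)\neq\varnothing$ for $Y=X\times 2^\omega$, so I can pick a countable dense $D\subseteq L(Y)$ together with a countable dense $E$ meeting $N(Y)$. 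No homeomorphism $h$ of $Y$ can then satisfy $h[D]=E$, since it would force $E\cap N(Y)=h[D\cap N(Y)]=h[\varnothing]=\varnothing$, a contradiction.

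For the space $X$ I would take the set $\omega^{<\omega}\cup\omega^\omega$ of all finite and infinite sequences of natural numbers, metrized by the ultrametric $d(x,y)=2^{-k}$ where $k$ is the length of the longest common initial segment of $x$ and $y$; for $s\in\omega^{<\omega}$ let $[s]$ denote the set of all (finite or infinite) sequences extending $s$. Under $d$ every finite sequence $s$ is isolated (the ball of radius $2^{-|s|}$ about $s$ is $\{s\}$), whereas a basic clopen neighbourhood of an infinite branch $x$ is $[x\upharpoonright n]$; in particular $X$ is zero-dimensional. It is Polish because $d$ is complete: a Cauchy sequence is either eventually constant, or its terms share initial segments of unbounded length and converge to the infinite branch determined by those segments. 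The isolated points $\omega^{<\omega}$ are dense, since $x\upharpoonright n\in[x\upharpoonright n]$ for every branch $x$; hence they are locally compact and $L(X)=\omega^{<\omega}$ is dense. On the other hand every infinite branch $x$ is \emph{non}-locally-compact: each neighbourhood of $x$ contains some $[x\upharpoonright n]$, hence contains the infinite set $\{(x\upharpoonright n)^\frown(k):k\in\omega\}$ of isolated points, which is closed and discrete in $X$; as a compact set contains no infinite closed discrete subset, $x$ has no compact neighbourhood. Thus $N(X)=\omega^\omega\neq\varnothing$, as required.

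With $X$ and $Y=X\times 2^\omega$ fixed, the remaining steps are routine. Since $L(Y)=\omega^{<\omega}\times 2^\omega$ is dense and (being a subspace of the separable metric space $Y$) separable, it contains a countable dense subset $D$, which is then dense in $Y$ and satisfies $D\cap N(Y)=\varnothing$. For $E$, take any countable dense $E_0\subseteq Y$, choose a point $q\in N(Y)=\omega^\omega\times 2^\omega$, and set $E=E_0\cup\{q\}$; this is countable, dense, and meets $N(Y)$. Invariance of $N(Y)$ under homeomorphisms then gives the conclusion as in the first paragraph. The main obstacle I anticipate is not the combinatorial core, which is short, but the verifications attached to the example $X$: that $d$ is genuinely complete, that every infinite branch fails to be locally compact via the closed-discrete-set argument, and that local compactness localizes correctly across the product with $2^\omega$. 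This example also explains why the dichotomy ``compact or nowhere compact'' in Corollary~\ref{corollary_main_zero_dimensional} cannot be dropped.
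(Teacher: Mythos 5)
Your proposal is correct and takes essentially the same approach as the paper: the paper likewise builds a zero-dimensional Polish space with a dense set of isolated points and a non-empty nowhere locally compact part (namely $X=\{(n,q_n):n\in\omega\}\cup(\{\omega\}\times\omega^\omega)$ inside $(\omega+1)\times\omega^\omega$), and then separates $D$ from $E$ using the homeomorphism-invariant set of points admitting a compact neighborhood, which in $X\times 2^\omega$ is exactly the set of isolated points of $X$ times $2^\omega$. Your tree space $\omega^{<\omega}\cup\omega^\omega$ is merely a different, equally valid, realization of the same kind of example.
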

\begin{proof}
Fix a countable dense subset $Q$ of $\omega^\omega$, and let $Q=\{q_n:n\in\omega\}$ be an enumeration. Define
$$
X=\big\{(n,q_n):n\in\omega\big\}\cup\big(\{\omega\}\times\omega^\omega\big)
$$
with the subspace topology inherited from $(\omega+1)\times\omega^\omega$, then set $Z=X\times 2^\omega$. One can easily verify that $X$ is a $\mathsf{G}_\delta$ subspace of $(\omega+1)\times\omega^\omega$, hence it is a Polish space. Set $U=\{(n,q_n):n\in\omega\}$, and observe that $U$ is a dense subset of $X$ consisting of isolated points. Furthermore, the set $U\times 2^\omega$ consists precisely of the points of $Z$ that have a compact neighborhood in $Z$. Finally, pick countable dense subsets $D$ and $E$ of $Z$ such that $D\subseteq U\times 2^\omega$ and $E\nsubseteq U\times 2^\omega$. It is clear that $(D,E)$ witnesses that $Z$ is not $\CDH$.
\end{proof}

Next, we will show that the assumption of connectedness in Theorem \ref{theorem_yang} and Corollary \ref{corollary_main_manifolds} cannot be dropped. As we mentioned in the introduction, a counterexample was already given in \cite[Example 1]{yang}. However, it seems worthwhile to include the following result, since it provides a much wider class of counterexamples with a simpler proof.

\begin{proposition}\label{proposition_optimality_connectedness}
Let $\kappa\leq\cccc$ be an infinite cardinal, and let $X_\alpha$ and $Y_\alpha$ be separable connected spaces such that each $|X_\alpha|\geq 2$ and $|Y_\alpha|\geq 2$. Then $\prod_{\alpha\in\kappa}(X_\alpha\oplus Y_\alpha)$ is separable but not $\CDH$.
\end{proposition}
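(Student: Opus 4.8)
The plan is to exploit the connected components of the product, which any self-homeomorphism must permute bijectively while preserving the cardinalities of their intersections with a given set. First I would record the component structure. Since each $X_\alpha$ and $Y_\alpha$ is connected and both are clopen in $X_\alpha\oplus Y_\alpha$, the connected components of $X_\alpha\oplus Y_\alpha$ are exactly $X_\alpha$ and $Y_\alpha$. Writing $A_\alpha^0=X_\alpha$ and $A_\alpha^1=Y_\alpha$ and setting $Z=\prod_{\alpha\in\kappa}(X_\alpha\oplus Y_\alpha)$, the standard fact that the component of a point in a product is the product of the components of its coordinates shows that the components of $Z$ are precisely the sets $C_f=\prod_{\alpha\in\kappa}A_\alpha^{f(\alpha)}$ for $f\in 2^\kappa$. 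Because $\kappa$ is infinite and every factor has at least two points, each $C_f$ is infinite; and, being a connected component, each $C_f$ is closed, while it has empty interior because a nonempty basic open set of $Z$ leaves some coordinate free and so cannot be contained in a single $C_f$. Thus each $C_f$ is closed nowhere dense. Separability of $Z$ is immediate from Hewitt-Marczewski-Pondiczery, since $Z$ is a product of $\kappa\leq\cccc$ separable spaces.

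The key invariant is, for a set $A\subseteq Z$, the function sending $k$ to the number of components $C$ with $|A\cap C|=k$. If $h\in\HH(Z)$ then $h$ carries components bijectively onto components, and $|A\cap C|=|h[A]\cap h[C]|$ for every component $C$; hence whenever $h[D]=E$ this invariant agrees for $D$ and $E$. I will therefore build two countable dense sets whose invariants differ at $k=1$. For $E$, fix any $f_0\in 2^\kappa$; starting from a countable dense $S\subseteq Z$, the fact that $C_{f_0}$ is nowhere dense guarantees that $E_0=S\setminus C_{f_0}$ is still dense, so setting $E=E_0\cup\{e\}$ for a single point $e\in C_{f_0}$ yields a countable dense set with $E\cap C_{f_0}=\{e\}$, a component met in exactly one point. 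For $D$, start from a countable dense $\{d_i:i\in\omega\}$ and, for each $i$, use that the component containing $d_i$ is infinite to choose a ``twin'' $d_i'\neq d_i$ in that same component; then $D=\{d_i:i\in\omega\}\cup\{d_i':i\in\omega\}$ is countable dense, and any component meeting $D$ contains some $d_i$, hence also $d_i'$, so it meets $D$ in at least two points.

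Finally, I would argue by contradiction: suppose $h\in\HH(Z)$ satisfies $h[D]=E$. Then $C=h^{-1}[C_{f_0}]$ is a component with $|D\cap C|=|E\cap C_{f_0}|=1$, contradicting that every component meeting $D$ contains at least two of its points. Hence no such $h$ exists and $Z$ is not $\CDH$, while being separable. I expect the only delicate points to be bookkeeping: verifying that the components of $Z$ are exactly the sets $C_f$ (a routine but essential general-topology computation) and that one can simultaneously keep a countable dense set off the chosen nowhere dense component $C_{f_0}$ and insert twins inside components. Notably, neither step invokes metrizability, a countable base, or any separation axiom, so the argument is uniform in $\kappa$ and applies to arbitrary separable connected factors and every infinite $\kappa\leq\cccc$.
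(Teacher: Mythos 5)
Your proof is correct and follows essentially the same route as the paper: both identify the connected components of the product as the sets $C_f=\prod_{\alpha\in\kappa}X_{\alpha,f(\alpha)}$ for $f\in 2^\kappa$ and exploit the invariant ``number of points of a dense set per component,'' which any homeomorphism carrying one dense set to the other must preserve. The paper's construction is slightly slicker --- it takes $E=D\cup\{z\}$ with $z$ in one of the uncountably many components missed by $D$, after arranging that $D$ meets every component it touches infinitely often --- but this differs from your twin-points and nowhere-dense-component argument only in bookkeeping, not in substance.
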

\begin{proof}
Set $X_{\alpha,0}=X_\alpha$ and $X_{\alpha,1}=Y_\alpha$ for $\alpha\in\kappa$, and set $Z=\prod_{\alpha\in\kappa}(X_\alpha\oplus Y_\alpha)$. Given $f\in 2^\kappa$, define
$$
C_f=\prod_{\alpha\in\kappa}X_{\alpha,f(\alpha)},
$$
and observe that the sets $C_f$ are precisely the connected components of $Z$. Now fix a countable dense subset $D$ of $Z$. Using the assumption that each $|X_\alpha|\geq 2$ and each $|Y_\alpha|\geq 2$, we can assume without loss of generality that $D\cap C_f$ is infinite for every $f\in 2^\kappa$ such that $D\cap C_f\neq\varnothing$. Now pick $z\in C_f$ for some $f\in 2^\kappa$ such that $D\cap C_f=\varnothing$, then set $E=D\cup\{z\}$. It is easy to realize that $(D,E)$ witnesses that $Z$ is not $\CDH$.
\end{proof}

Next, we will show that the assumption of condition $(\partial)$ in Theorem \ref{theorem_yang} and Corollary \ref{corollary_main_manifolds} cannot be dropped.

\begin{theorem}\label{theorem_optimality_boundaries}
Let $\kappa\leq\cccc$ be a cardinal, and let $X_\alpha$ for $\alpha\in\kappa$ be connected manifolds with boundary. Assume that condition $(\partial)$ fails. Then $\prod_{\alpha\in\kappa}X_\alpha$ is separable but not $\CDH$.
\end{theorem}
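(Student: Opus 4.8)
The plan is to show that, when condition $(\partial)$ fails, the pseudoboundary of $X=\prod_{\alpha\in\kappa}X_\alpha$ is preserved by every self-homeomorphism, and to exploit this rigidity. Separability is immediate from the Hewitt-Marczewski-Pondiczery theorem, since each manifold $X_\alpha$ is separable and metrizable and $\kappa\leq\cccc$. For the rest, set $A=\{\alpha\in\kappa:\partial X_\alpha\neq\varnothing\}$; by the failure of $(\partial)$, the set $A$ is finite and non-empty. Write $X=X_A\times Y$ with $X_A=\prod_{\alpha\in A}X_\alpha$ and $Y=\prod_{\alpha\in\kappa\setminus A}X_\alpha$. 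Then $X_A$ is a finite product of connected manifolds with boundary, hence itself a connected manifold with non-empty boundary of some dimension $\dime(X_A)=d\geq 1$, while $Y$ is a product of connected manifolds without boundary. From the product formula for boundaries one gets $\partial X=\partial(X_A)\times Y$, so that $X\setminus\partial X=(X_A\setminus\partial X_A)\times Y$. In particular $\partial X$ is closed and nowhere dense (as $\partial X_A$ is nowhere dense in $X_A$), the dense open set $X\setminus\partial X$ is again a product of at most $\cccc$ separable metrizable spaces and hence separable, and $\partial X\neq\varnothing$.

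Granting the key claim that $h[\partial X]=\partial X$ for every $h\in\HH(X)$, I would finish as follows. Fix a countable dense subset $D$ of the separable space $X\setminus\partial X$; then $D$ is a countable dense subset of $X$ contained in the pseudointerior. Pick any $p\in\partial X$ and set $E=D\cup\{p\}$, a countable dense subset of $X$ meeting $\partial X$. If some $h\in\HH(X)$ had $h[D]=E$, then $p\in h[D]\subseteq h[X\setminus\partial X]=X\setminus\partial X$, contradicting $p\in\partial X$. Thus no homeomorphism carries $D$ onto $E$, and $(D,E)$ witnesses that $X$ is not $\CDH$.

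It remains to prove the invariance of $\partial X$, which is the heart of the argument. If $\kappa$ is finite, then $X$ is itself a manifold with non-empty boundary $\partial X=\partial(X_A)\times Y$, and the claim is the classical topological invariance of the manifold boundary (via local homology or invariance of domain). The substantive case is $\kappa$ infinite, where $Y$ is an infinite product. Here I would reduce the invariance to a purely local comparison: using charts together with the homogeneity of $Y$, a boundary point of $X$ has arbitrarily small neighborhoods homeomorphic to $[0,\infty)\times\RRR^{d-1}\times W$, whereas an interior point has arbitrarily small neighborhoods homeomorphic to $\RRR^d\times W$, for a common small open $W\subseteq Y$. Since having a neighborhood basis of the ``interior type'' is preserved by homeomorphisms, the invariance of $\partial X$ reduces to showing that a boundary point admits no such basis; equivalently, that the extra half-line factor cannot be absorbed, so that these two local models are genuinely distinct near the distinguished point.

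I expect this last step to be the main obstacle. It is exactly the non-absorption phenomenon that fails in the presence of infinitely many boundary factors---where, as for the Hilbert cube $[0,1]^\omega$, the boundary becomes invisible and the space is homogeneous---and it is precisely here that the result of Ganea \cite{ganea} cited in \cite[Remark 2]{yang} is needed. Concretely, the crux is to prove that the pseudointerior $X\setminus\partial X$ and the pseudoboundary $\partial X$ are distinct orbits of $\HH(X)$, i.e.\ that no homeomorphism can carry a boundary point to an interior point. Once this is established, everything fits together: the pseudointerior $(X_A\setminus\partial X_A)\times Y$ is homogeneous, $\partial X$ is invariant, and the pair $(D,E)$ constructed above completes the proof that $X$ is not $\CDH$.
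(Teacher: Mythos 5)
Your proof rests entirely on the invariance claim that $h[\partial X]=\partial X$ for every $h\in\HH(X)$ (equivalently, that no homeomorphism of $X$ carries a pseudointerior point to a pseudoboundary point), and this claim is exactly what you do not prove. The gap cannot be closed by citing \cite{ganea}: Ganea's result is a \emph{non-homogeneity} statement --- there exist two points of $X$ lying in distinct orbits of $\HH(X)$ --- whereas your argument needs the strictly stronger assertion that the pseudointerior and the pseudoboundary are each unions of orbits. With $p\in\partial X$ chosen arbitrarily, as in your construction of $E=D\cup\{p\}$, nothing rules out a priori that some homeomorphism carries a point of $D$ onto $p$; your pair $(D,E)$ witnesses non-$\CDH$-ness only after that has been excluded. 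Moreover, as you yourself observe, this kind of boundary rigidity is genuinely delicate in infinite products (it fails outright when infinitely many factors have boundary, since the Hilbert cube is homogeneous), so it cannot follow from soft local-model considerations; proving it would constitute essentially all of the mathematical content of the theorem, not a final citation.

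The paper's proof sidesteps invariance completely, and this is the idea your proposal is missing: $X$ is connected, being a product of connected spaces, and by van Mill's theorem \cite[Corollary 3.2]{van_mill_article_2013} a connected $\CDH$ space is homogeneous; hence if $X$ were $\CDH$ it would be homogeneous, contradicting \cite{ganea} used exactly as stated, as a black box. Alternatively, your construction can be repaired without van Mill's theorem by weakening what you demand: the pseudointerior $I=(X_A\setminus\partial X_A)\times Y$ lies in a \emph{single} orbit of $\HH(X)$ (any two interior points of the connected manifold $X_A$ are related by a homeomorphism fixing $\partial X_A$, and $Y$ is a product of homogeneous spaces, so one can act coordinatewise). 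Granting Ganea's non-homogeneity, there must then exist a point $p$ whose orbit is disjoint from the orbit containing $I$, and since orbits partition $X$, that orbit misses $I$ entirely, so $p\in\partial X$. Taking \emph{this} $p$ in your definition of $E$ makes your argument work: if $h[D]=E$ then $p=h(d)$ for some $d\in I$, putting $p$ in the orbit of $I$, a contradiction. Either way, the only input needed from \cite{ganea} is its stated conclusion; your write-up, as it stands, demands more from it than it provides.
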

\begin{proof}
Set $X=\prod_{\alpha\in\kappa}X_\alpha$. Assume, in order to get a contradiction, that $X$ is~$\CDH$. Since $X$ is also connected, it follows from \cite[Corollary 3.2]{van_mill_article_2013} that $X$ is homogeneous. This contradicts \cite{ganea}.
\end{proof}

The following result gives us one more tool to obtain the $\GPP$. Although we will only use it for the purpose of exhibiting a counterexample (see below), it seems to be of independent interest.

\begin{theorem}\label{theorem_gpp_compact}
Let $\kappa\leq\omega$ be a cardinal, and let $X_\alpha$ be crowded compact metrizable spaces for $\alpha\in\kappa$. Assume that $X_\alpha\times X_\beta$ is strongly locally homogeneous for every $\{\alpha,\beta\}\in [\kappa]^2$. Then $\prod_{\alpha\in\kappa}X_\alpha$ has the $\GPP$.
\end{theorem}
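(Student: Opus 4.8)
The plan is to run a Baire category argument in $\HH(X)$, where $X=\prod_{\alpha\in\kappa}X_\alpha$. Since $\kappa\leq\omega$ and each $X_\alpha$ is compact metrizable, $X$ is compact metrizable, so $\HH(X)$ is a Polish group (fix an admissible metric $\di$ on $X$ and equip $\HH(X)$ with the complete metric $\widehat{\di}(f,g)+\widehat{\di}(f^{-1},g^{-1})$); in particular $\HH(X)$ is Baire. The $\GPP$ is trivial for $\kappa\leq 1$, so assume $\kappa\geq 2$, and fix a countable $D=\{d_k:k\in\omega\}\subseteq X$. For distinct $m,n\in\omega$ and $\alpha\in\kappa$ set
$$
B_{m,n,\alpha}=\{h\in\HH(X):\pi_\alpha\big(h(d_m)\big)=\pi_\alpha\big(h(d_n)\big)\}.
$$
A homeomorphism $h$ lies outside all of these sets exactly when $h[D]$ is in general position, so, as there are only countably many of them, it suffices to prove that each $B_{m,n,\alpha}$ is nowhere dense and then apply the Baire category theorem.

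Closedness is routine: the evaluation $h\mapsto h(d_k)$ is continuous, hence so is $h\mapsto\big(\pi_\alpha(h(d_m)),\pi_\alpha(h(d_n))\big)$ into $X_\alpha\times X_\alpha$, and $B_{m,n,\alpha}$ is the preimage of the closed diagonal. The heart of the matter is the empty interior. Fix $h\in B_{m,n,\alpha}$; I will produce homeomorphisms arbitrarily close to $h$ that avoid $B_{m,n,\alpha}$. Write $p=h(d_m)$ and $q=h(d_n)$. These are distinct, and $p(\alpha)=q(\alpha)=:c$ (otherwise $h\notin B_{m,n,\alpha}$ already), so there is $\beta\neq\alpha$ with $p(\beta)\neq q(\beta)$. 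Under the projection $\tau_{\{\alpha,\beta\}}\colon X\to X_{\{\alpha,\beta\}}=X_\alpha\times X_\beta$, the points $p,q$ map to the distinct points $(c,p(\beta))$ and $(c,q(\beta))$.

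Now the hypothesis that $X_\alpha\times X_\beta$ is strongly locally homogeneous enters. Fix a witnessing base $\BB$ and pick $U\in\BB$ containing $(c,p(\beta))$ with diameter small enough that $(c,q(\beta))\notin U$. Since $X_\alpha$ is crowded, the projection of $U$ onto the factor $X_\alpha$ is an infinite open set, so it contains some $c'\neq c$; choose $(c',b')\in U$ with first coordinate $c'$. Strong local homogeneity yields $g\in\HH(X_\alpha\times X_\beta)$ with $g(c,p(\beta))=(c',b')$ and $\supp(g)\subseteq U$. Lifting by the identity on the remaining coordinates gives $\widetilde g=g\times\mathrm{id}\in\HH(X)$, which moves points only inside $\tau_{\{\alpha,\beta\}}^{-1}[U]$; hence, taking $U$ of sufficiently small diameter, $\widetilde g$ is as uniformly close to the identity as desired. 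Put $h'=\widetilde g\circ h$. Since $(c,q(\beta))\notin U$ it is fixed by $g$, so $\pi_\alpha(h'(d_n))=c$, whereas $\pi_\alpha(h'(d_m))=c'\neq c$; thus $h'\notin B_{m,n,\alpha}$.

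Finally, $\widehat{\di}(h',h)\leq\widehat{\di}(\widetilde g,\mathrm{id})$ is small, and since $(h')^{-1}=h^{-1}\circ\widetilde g^{-1}$ with $h^{-1}$ uniformly continuous, $\widehat{\di}((h')^{-1},h^{-1})$ is small as well; so $h'$ lies in any prescribed neighborhood of $h$. Therefore $B_{m,n,\alpha}$ is nowhere dense, and any $h\in\HH(X)\setminus\bigcup_{m,n,\alpha}B_{m,n,\alpha}$ witnesses the $\GPP$ for $D$. I expect the empty-interior step to be the main obstacle: the closedness and the concluding Baire argument are routine, but the perturbation requires carefully selecting $\beta$ and $U$, lifting $g$ to the full product, and controlling both $\widetilde g$ and its inverse so that $h'$ genuinely lands in the given Polish-group neighborhood of $h$.
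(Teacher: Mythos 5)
Your proposal is correct and follows essentially the same route as the paper's proof: a Baire category argument in $\HH(X)$ (with the uniform/compact-open topology), where the pairwise strong local homogeneity of $X_\alpha\times X_\beta$ plus crowdedness of $X_\alpha$ is used to perturb $h$ by a homeomorphism supported on a small two-coordinate box, moving one point's $\alpha$-coordinate off the other's. The only cosmetic differences are that you work with the closed nowhere dense sets $B_{m,n,\alpha}$ and the complete group metric, whereas the paper works with their open complements $U_{p,\alpha}$ and cites the Baire property of $\HH(X)$ directly.
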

\begin{proof}
Clearly, we can assume that $\kappa\geq 2$. Set $X=\prod_{\alpha\in\kappa}X_\alpha$, and let $D\subseteq X$ be countable. Throughout this proof, we will endow $\HH(X)$ with the subspace topology inherited from $\CC(X,X)$ with the compact-open topology. Given $p=\{d,e\}\in [D]^2$ and $\alpha\in\kappa$, define
$$
U_{p,\alpha}=\{h\in\HH(X):h(d)(\alpha)\neq h(e)(\alpha)\}.
$$
Since $\HH(X)$ is Baire by \cite[Corollary 1.3.13]{van_mill_book} (in fact, it is Polish), it will be enough to prove the following two claims.

\noindent\textbf{Claim 1.} Each $U_{p,\alpha}$ is open in $\HH(X)$.

\noindent\textit{Proof.} Fix $p=\{d,e\}\in [D]^2$ and $\alpha\in\kappa$. Pick $h\in U_{p,\alpha}$. Let $U$ and $V$ be disjoint open subsets of $X_\alpha$ such that $h(d)(\alpha)\in U$ and $h(e)(\alpha)\in V$. The observation that
$$
h\in\big[\{d\},\pi^{-1}_\alpha[U]\big]\cap\big[\{e\},\pi^{-1}_\alpha[V]\big]\cap\HH(X)\subseteq U_{p,\alpha}
$$
clearly concludes the proof. $\blacksquare$

\noindent\textbf{Claim 2.} Each $U_{p,\alpha}$ is dense in $\HH(X)$.

\noindent\textit{Proof.} Fix admissible metrics $\di_\alpha$ on $X_\alpha$ for $\alpha\in\kappa$. Given $\Omega\subseteq\kappa$, denote by $\di^\ast_\Omega$ the admissible metric on $X_\Omega$ obtained by setting
$$
\di^\ast_\Omega(x,y)=\sum_{\alpha\in\Omega}2^{-\alpha}\cdot\di_\alpha\big(x(\alpha),y(\alpha)\big)
$$
for $x,y\in X_\Omega$. Fix $p=\{d,e\}\in [D]^2$ and $\alpha\in\kappa$. Pick $h\in\HH(X)$. We need to find elements of $U_{p,\alpha}$ that are arbitrarily close to $h$. Clearly, we can assume that $h(d)(\alpha)=h(e)(\alpha)$. Since $h(d)\neq h(e)$, we can fix $\beta\in\kappa$ such that $h(d)(\beta)\neq h(e)(\beta)$. Set $\Omega=\{\alpha,\beta\}$. Given $x\in X_\Omega$ and $\varepsilon>0$, we will denote by $\Ball(x,\varepsilon)=\{z\in X_\Omega:\di^\ast_\Omega(z,x)<\varepsilon\}$ the open ball around $x$ of radius $\varepsilon$.

Fix $\varepsilon>0$. We will find $h'\in\HH(X)$ such that $h'\circ h\in U_{p,\alpha}$ and $\di^\ast_\kappa\big(h'(x),x\big)<\varepsilon$ for every $x\in X$. By Theorem \ref{theorem_compact_open}, this will conclude the proof. Set $x=\tau_\Omega\big(h(d)\big)$, $x'=\tau_\Omega\big(h(e)\big)$, and $x_\alpha=h(d)(\alpha)=h(e)(\alpha)$. Since $X_\Omega$ is strongly locally homogeneous, there exists $0<\delta<\varepsilon/2$ such that for every $y\in\Ball(x,\delta)$ there exists $h_\Omega\in\HH(X_\Omega)$ such that $h_\Omega(x)=y$ and $\supp(h_\Omega)\subseteq\Ball(x,\delta)$. Without loss of generality, assume that $x'\notin\Ball(x,\delta)$. Since $X_\alpha$ is crowded, we can fix $y\in\Ball(x,\delta)$ such that $y(\alpha)\neq x_\alpha$, then obtain $h_\Omega$ as above. Finally, let $h_\gamma$ be the identity on $X_\gamma$ for $\gamma\in\kappa\setminus\Omega$, then set
$$
h'=h_\Omega\times\prod_{\gamma\in\kappa\setminus\Omega}h_\gamma.
$$
It is straightforward to check that $h'$ is as desired. $\blacksquare$
\end{proof}

In the following table, we provide explicit counterexamples showing that none of the requirements in the statement of Theorem \ref{theorem_main} can be dropped. The first row of the table simply lists these requirements; in the second row, below each requirement, we exhibit a product that satisfies all the \emph{other} requirements but is not $\CDH$. (In the case of $\QQQ$ and \cite[Example 1.3]{van_mill_article_2011}, the product consists of a single factor.)

\begin{center}
\begin{tabular}{c|c|c|c|c}
& & Strongly & Strongly & General \\
$\kappa<\pppp$ & Polish & locally & $n$-homogeneous & position \\
& & homogeneous & for every $n\in\omega$ & property \\\hline
& & & & \\
$(2^\omega)^\pppp$ & $\QQQ$ & \cite[Example 1.3]{van_mill_article_2011} & $(\SSS^1\oplus\SSS^1)^\omega$ & $2^\omega\times\SSS^2$\\
& & &
\end{tabular}
\end{center}

We conclude by clarifying why the spaces in question have the desired properties, in the cases where such clarifications are needed. For the zero-dimensional spaces, the homogeneity properties follow from Proposition \ref{proposition_zero_dimensional}. The $\GPP$ of $(2^\omega)^\pppp$ follows from Theorem \ref{theorem_gpp_groups}, while the $\GPP$ of $(\SSS^1\oplus\SSS^1)^\omega$ follows from Theorem \ref{theorem_gpp_compact}. The fact that $\SSS^2$ is strongly $n$-homogeneous for every $n\in\omega$ can be deduced from \cite[Theorem 3.2]{ungar}, since removing finitely many points from $\SSS^2$ yields a connected space by \cite[Corollary 3.7.3]{van_mill_book}. With the exception of \cite[Example 1.3]{van_mill_article_2011}, all products are non-$\CDH$ by the results of this section or even simpler arguments.

\end{document}